\numberwithin{equation}{section}
\newtheorem{theorem}{Theorem}[section]
\newtheorem{proposition}[theorem]{Proposition}
\newtheorem{lemma}[theorem]{Lemma}
\newtheorem{corollary}[theorem]{Corollary}
\theoremstyle{definition}
\theoremstyle{remark}
\newtheorem{remark}[theorem]{Remark}
\newtheorem{claim}[theorem]{Claim}
\renewcommand{\hom}{\operatorname{Hom}}
\newcommand{\Z}{\mathbb{Z}}
\newcommand{\Q}{\mathbb{Q}}
\newcommand{\C}{\mathbb{C}}
\newcommand{\QZ}{\mathbb{Q}/\mathbb{Z}}
\newcommand{\Fp}{\mathbb{F}_{p}}
\newcommand{\Fpu}{\mathbb{F}_{p}^{\times}}
\newcommand{\Zp}{\mathbb{Z}_{p}}
\newcommand{\Zpu}{\mathbb{Z}_{p}^{\times}}
\newcommand{\OA}{{\rm O}(A)}
\newcommand{\OB}{{\rm O}(B)}
\newcommand{\GAOA}{G(A, {\rm O}(A))}
\newcommand{\gaoa}{G'(A, {\rm O}(A))}
\newcommand{\SL}{{\rm SL}_2(\mathbb{Z})}
\newcommand{\Mp}{{\rm Mp}_2(\mathbb{Z})}
\newcommand{\e}{{\mathbf e}}
\DeclareMathOperator{\tr}{tr}
\begin{document}

\title[]{Equivariant Gauss sum of finite quadratic forms}
\author[]{Shouhei Ma}
\thanks{Supported by Grant-in-Aid for Scientific Research (S) 15H05738.} 
\address{Department~of~Mathematics, Tokyo~Institute~of~Technology, Tokyo 152-8551, Japan}
\email{ma@math.titech.ac.jp}

\begin{abstract}
The classical quadratic Gauss sum can be thought of as an exponential sum attached to a quadratic form on a cyclic group. 
We introduce an equivariant version of Gauss sum for arbitrary finite quadratic forms, 
which is an exponential sum twisted by the action of the orthogonal group. 
We prove that simple arithmetic formulae hold for some basic classes of quadratic forms. 
In application, such invariant appears in the dimension formula for certain vector-valued modular forms. 
\end{abstract} 

\maketitle


\section{Introduction}\label{sec:intro}

In his study of the quadratic reciprocity law, Gauss introduced and evaluated the exponential sum 
$\sum_{x=0}^{p-1}e(ax^2/p)$ where $a\in{\Fpu}$, which is now called the Gauss sum. 
Here $e(z)={\rm exp}(2\pi iz)$ for $z\in{\QZ}$. 
If we consider the ${\QZ}$-valued quadratic form $q(x)=2^{-1}ax^2/p$ on the cyclic group $A={\Z}/p$ and 
the associated bilinear form $(x, y)=q(x+y)-q(x)-q(y)$, 
the Gauss sum can be written in the form $G(A)=\sum_{x\in A}e((x, x))$. 
Gauss evaluated such an exponential sum also for $A={\Z}/p^k$ (see \cite{B-E-W}). 
If we consider $G(A)$ for general finite quadratic forms $A$, we have essentially no further problem concerning evaluation 
because the product formula $G(A_1\oplus A_2)=G(A_1)\cdot G(A_2)$ holds and 
$A$ can be decomposed into cyclic forms as above and certain special forms on ${\Z}/2^k\oplus{\Z}/2^k$ 
(see \cite{Wa}). 

In this paper we introduce an equivariant version of $G(A)$ for a finite quadratic form $A=(A, q)$, 
which is a twist of $G(A)$ by the orthogonal group ${\OA}={\rm O}(A, q)$. 
Let $( \: , \: ): A\times A\to{\QZ}$ be the associated bilinear form as above. 
In general, for a subgroup $\Gamma$ of ${\OA}$, we define 
\begin{equation*}
G(A, \Gamma) = \sum_{[x]\in\Gamma\backslash A}\sum_{y\in \Gamma x} e((x, y)). 
\end{equation*}
The classical Gauss sum $G(A)$ is the case $\Gamma=\{ {\rm id} \}$. 
We are interested in the case $\Gamma={\OA}$, the motivation coming from certain modular forms (\S \ref{sec:dim formula}). 
If $A=\oplus_{p}A_p$ is the decomposition into $p$-parts, we have (Corollary \ref{localize Gauss sum})
\begin{equation*}
G(A, {\OA}) = \prod_{p}G(A_p, {\rm O}(A_p)). 
\end{equation*}
Hence we may restrict our attention to quadratic forms on $p$-groups. 

Our main result is an arithmetic formula of $G(A, {\OA})$ for some basic quadratic forms, 
which has similar but different shape from that of $G(A)$. 
For simplicity we state the result only for $p>2$, referring to \S \ref{sec:p=2} for the case $p=2$. 
We write $(\frac{\cdot}{p})$ for the Legendre symbol. 

\begin{theorem}[\S \ref{sec:p odd}]\label{main GAOA}
Let $p>2$. 

$(1)$ Let $A$ be the symmetric bilinear form $(x, y)=axy/p^k$ on ${\Z}/p^k$ where $a\in{\Zpu}$. 
Then 
\begin{equation*}
{\GAOA} = 
\begin{cases}
    0,           & p^k \equiv 3 \: \textrm{mod}\:  4,  \\ 
     \left( \frac{a}{p} \right)^k \sqrt{|A|}, & p^k \equiv 1 \: \textrm{mod}\:  4. 
\end{cases}
\end{equation*}

$(2)$ Let $A$ be a $p$-elementary form (quadratic space over ${\Fp}$) of dimension $m>1$. 
When $A$ contains an isotropic vector, then 
\begin{equation*}
{\GAOA}= 
\begin{cases}
    0,           & p \equiv 3 \: \textrm{mod} \: 4, \\ 
    2\left( \frac{2\delta}{p} \right)^m \left( \frac{d(A)}{p} \right) \sqrt{|A|}, &  p \equiv 1 \: \textrm{mod} \: 4. 
\end{cases}
\end{equation*}  
Here $d(A)\in{\Fpu}/({\Fpu})^2$ is the discriminant of $A$, 
and $\delta\in{\Fp}$ is a square root of $-1$ which exists when $p\equiv 1$ mod $4$. 
When $A$ is the anisotropic plane, we have 
${\GAOA}=(-1)^{(p+1)/2}p$. 

$(3)$ Let $A=A_1\oplus A_2$ where $A_1\simeq{\Z}/p^k$ is a cyclic form as in $(1)$ with $k>1$ and $A_2$ is a $p$-elementary form as in $(2)$. 
Then 
\begin{equation*}\label{eqn:sec1 prod formula}
{\GAOA} = G(A_1, {\rm O}(A_1)) \cdot G(A_2, {\rm O}(A_2)). 
\end{equation*}
\end{theorem}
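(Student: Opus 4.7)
For (1), note that the orthogonal group of $A = \Z/p^k$ with $p$ odd is $\{\pm \mathrm{id}\}$, since $\pm 1$ are the only square roots of unity modulo $p^k$. Every nonzero orbit is then a pair $\{x, -x\}$, and its contribution $e((x,x)) + e(-(x,x))$ collapses $\GAOA$ to $\mathrm{Re}\, G(A)$, where $G(A) = \sum_x e(ax^2/p^k)$ is the classical Gauss sum. The standard evaluation $G(A) = \epsilon_{p^k}(a/p)^k p^{k/2}$, with $\epsilon_m = 1$ or $i$ according as $m \equiv 1$ or $3 \bmod 4$, then yields both cases of (1) upon taking the real part.

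For (2), Witt's extension theorem classifies the $\OA$-orbits on a $p$-elementary $A$ as the level sets $\{y : (y,y) = r\}$, with $\{0\}$ split off from the nonzero isotropic orbit whenever the form is isotropic. To compute $\sum_{y : (y,y)=r} e((x,y))$ for $(x,x) = r$, I would introduce the characteristic function $\mathbf{1}_{(y,y)=r} = p^{-1}\sum_{u \in \F_p} e(u((y,y) - r))$, swap the order of summation, and complete the square in $y$; the resulting inner Gauss sum of the $\F_p$-quadratic form $u \cdot (y,y)$ evaluates to $(u/p)^m(d(A)/p)g_p^m$, where $g_p$ is the classical Gauss sum on $\F_p$. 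Summing orbit contributions and rearranging in $u$, the $p \bmod 4$ dichotomy emerges from the solvability of $4u^2 + 1 \equiv 0 \bmod p$: unsolvable for $p \equiv 3$, so the contributions telescope to zero; solvable for $p \equiv 1$ with the two exceptional values $u = \pm \delta/2$ inflating their $r$-sum from $-1$ to $p-1$, producing the claimed $2(2\delta/p)^m(d(A)/p)\sqrt{|A|}$. The anisotropic plane is handled by the same machinery but with only the zero orbit and the orbits at $r \in \Fpu$, yielding $(-1)^{(p+1)/2}p$.

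The subtle part is (3). Here $\OA$ strictly contains ${\rm O}(A_1) \times {\rm O}(A_2)$: a general $\sigma \in \OA$ sends $e_1 \mapsto (ce_1, v)$ and $f \in A_2 \mapsto (\phi(f)p^{k-1}e_1, \psi(f))$, subject to $\psi \in {\rm O}(A_2)$, $c \equiv \pm 1 \bmod p^{k-1}$, $(v,v) = (1 - c^2)a/p^k$, and $\phi$ determined by the cross-orthogonality $(v, \psi(f)) = -ca\phi(f)/p$. My strategy is the averaged identity $\GAOA = |\OA|^{-1}\sum_g S(g)$ with $S(g) = \sum_x e((x, gx))$. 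Writing $S(g)$ as an iterated sum over $x_1 \in A_1$ followed by $x_2 \in A_2$, the inner $x_1$-sum is a $\Z/p^k$-Gauss sum with quadratic coefficient $ca$ and a linear term of denominator $p$ that encodes the mixing data. The key observation, and the main step that must be proved, is that for $k \geq 2$ this linear perturbation lies in the $p^{k-1}$-multiples and so is absorbed by an integer translate of $x_1$; the inner sum then evaluates to $(ca/p)^k \epsilon_{p^k} p^{k/2}$ \emph{independently} of $\phi$ and $v$. Consequently $S(g) = \epsilon_{p^k}p^{k/2}(ca/p)^k \cdot S_2(\psi)$ where $S_2(\psi) = \sum_{x_2} e((x_2, \psi(x_2)))$. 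A parameter count then gives $|\OA| = 2|A_2| \cdot |{\rm O}(A_2)|$, while summing the Legendre symbol $(c/p)^k$ weighted by the number of admissible $v$'s collapses to $|A_2|(1 + (-1/p)^k)$; dividing shows $\GAOA = G(A_1, {\rm O}(A_1)) \cdot G(A_2, {\rm O}(A_2))$, with both factors vanishing exactly when $p^k \equiv 3 \bmod 4$.
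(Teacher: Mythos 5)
Your argument for (1) is exactly the paper's proof ($\mathrm{O}(A)=\{\pm\mathrm{id}\}$, reduce to $\mathrm{Re}\,G(A)$, quote the classical evaluation), but for (2) and (3) you take genuinely different and correct routes. For (2) the paper splits off a hyperbolic plane, computes $G(U,\mathrm{O}(U))$ by parametrizing $U_{\mu}$, and then bookkeeps via the level-set counts $|B_{\alpha}|$ from Gerstein, with a separate parity-of-$m$ case analysis and a completely separate double-cover argument for the anisotropic plane; your orthogonality-of-characters detection $\mathbf{1}_{(y,y)=r}=p^{-1}\sum_{u}e(u((y,y)-r))$ plus completing the square reduces everything to the single equation $4u^2+1=0$ in ${\Fp}$, and I verified it end to end: after absorbing the boundary terms at the zero and isotropic orbits one gets $\GAOA=\sum_{u:\,4u^2=-1}G_u$ in the isotropic case (with $G_u=\left(\frac{u}{p}\right)^m\left(\frac{d(A)}{p}\right)g_p^m$ as you say), and the anisotropic plane comes out of the same computation with $r$ restricted to ${\Fpu}$, giving $1+p^{-1}\sum_{u\ne0}G_u\left(p\cdot[4u^2=-1]-(1)\right)=(-1)^{(p+1)/2}p$ — so your method buys uniformity where the paper needs three separate arguments; note only that your phrase ``inflating the $r$-sum from $-1$ to $p-1$'' belongs to the anisotropic bookkeeping, while in the isotropic case the full $r$-sum gives $p$ versus $0$. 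For (3), the paper proves $\OA=\{\pm1\}\times(B\rtimes{\OB})$, explicitly classifies orbits on the strata $A_0$, $A_1$, and kills the $A_0$ contribution via the vanishing $\sum_{x\in({\Z}/p^k)^{\times}}\zeta_{p^k}^{ax^2}=0$ (local square theorem, $k\geq2$); you instead use the Burnside-type average $\GAOA=|{\OA}|^{-1}\sum_{g}\sum_{x}e((x,gx))$, which is valid by orbit–stabilizer, and your flagged key step is correct: the linear term $\ell p^{k-1}x_1$ is absorbed by completing the square at the cost of $-(4ac)^{-1}\ell^2p^{2k-2}$, which dies mod $p^k$ precisely because $2(k-1)\geq k$ when $k\geq2$ (the same place the paper's vanishing lemma uses $k\geq2$, but by a different mechanism), and your parameter count $|{\OA}|=2|A_2|\,|{\rm O}(A_2)|$ with $\left(\frac{c}{p}\right)=\left(\frac{\pm1}{p}\right)$ gives the averaged prefactor $\frac{1}{2}\left(1+\left(\frac{-1}{p}\right)^{k}\right)\left(\frac{a}{p}\right)^{k}\epsilon_{p^k}\sqrt{p^k}$, which equals $G(A_1,{\rm O}(A_1))$ in both congruence classes, so the product formula follows. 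One wording slip in (3): it is not true that ``both factors vanish exactly when $p^k\equiv3\bmod4$'' — only the cyclic factor does (the elementary factor vanishes iff $p\equiv3\bmod4$ and $A_2$ is isotropic, and not for the anisotropic plane) — but this does not affect your derivation. The trade-off: your averaging and square-completion arguments lean on $2\in{\Zpu}$ and on the simple shape of ${\OA}$, so they would not transfer to the $p=2$, $k\leq3$ cases where the paper's stratified orbit analysis still functions (and where the product formula in fact fails), whereas within $p>2$ your route is shorter and more uniform.
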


The product formula in $(3)$ is not trivial because ${\OA}$ does not preserve the decomposition $A=A_1\oplus A_2$ in general. 
When $p=2$, $k\leq3$, this formula does not hold. 
The formula in $(2)$ already shows that the most naive product formula does not hold. 

We were led to considering such an orbital exponential sum through the study of certain vector-valued modular forms. 
The Gauss sum ${\GAOA}$ appears in the dimension formula for them. 
We explain this in \S \ref{sec:dim formula}. 
In that formula also arises the following variant of ${\GAOA}$: 
\begin{equation*}
{\gaoa}= \sum_{[x]\in{\OA}\backslash A} e(-q(x))\sum_{y\in{\OA}x}e((x, y)). 
\end{equation*}
We show that a similar arithmetic formula holds for ${\gaoa}$. 
We state the result only for $p>3$, referring to \S \ref{sec:2nd kind} for the case $p=2, 3$. 

\begin{theorem}[\S \ref{sec:2nd kind}]
Let $p>3$. 

(1) If $A$ is a cyclic form as in Theorem \ref{main GAOA} (1), then 
\begin{equation*}
{\gaoa} = 
\frac{1}{2}\left( 1+ \left(\frac{p}{3}\right)^k \right)   \left( \frac{2a}{p}\right)^k  \sqrt{|A|} \times 
\begin{cases}
1, & p^k \equiv 1 \: \textrm{mod}\:  4,  \\ 
\sqrt{-1}, & p^k\equiv 3  \: \textrm{mod}\:  4. 
\end{cases}
\end{equation*}

(2) Let $A$ be a $p$-elementary form as in Theorem \ref{main GAOA} (2). 
When $A$ contains an isotropic vector, we have 
\begin{equation*}
{\gaoa} = 
\begin{cases}
0, & p\equiv 2 \: \textrm{mod}\:  3, \\
2\zeta_{16}^{m^2(p-1)^2} \left( \frac{2\delta}{p} \right)^m \left( \frac{(-1)^k d(A)}{p} \right) \sqrt{|A|}, & p\equiv 1 \: \textrm{mod}\:  3.  
\end{cases}
\end{equation*}
Here $\zeta_{16}=e(1/16)$, $k=[m/2]$ and $\delta\in{\Fp}$ is a primitive $6$-th root of $1$ which exists when $p\equiv 1$ mod $3$. 
When $A$ is the anisotropic plane, we have 
${\gaoa} = - \left( \frac{p}{3} \right) p$. 

(3) If $A=A_1\oplus A_2$ is a quadratic form as in Theorem \ref{main GAOA} (3), we have 
\begin{equation*}
{\gaoa} = G'(A_1, {\rm O}(A_1)) \cdot G'(A_2, {\rm O}(A_2)). 
\end{equation*}
\end{theorem}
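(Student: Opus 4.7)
My plan is to follow the scheme of Theorem \ref{main GAOA}, tracking the extra twist $e(-q(x))$ at each step; the useful preliminary observation is that $q$ is constant on $\OA$-orbits, so $e(-q(x))$ factors out of the inner sum and depends only on the orbit. For part (1), since $p > 2$ we have $\OA = \{\pm 1\}$ acting on $A = \Z/p^k$, with orbits $\{0\}$ and $\{x, -x\}$ for $x \neq 0$ (distinct because $p$ is odd). The inner sum $e((x,x)) + e((x,-x)) = e(2q(x)) + e(-2q(x))$ becomes $e(q(x)) + e(-3q(x))$ after multiplication by $e(-q(x))$, and since $q(-x) = q(x)$, summing over orbit representatives together with the $x = 0$ contribution gives
\[
\gaoa = \tfrac{1}{2} \Bigl( \sum_{x \in A} e(q(x)) + \sum_{x \in A} e(-3q(x)) \Bigr).
\]
Both are classical quadratic Gauss sums on $\Z/p^k$; evaluating via the standard formula and using $(\frac{-3}{p}) = (\frac{p}{3})$ for $p > 3$ collapses this to the stated expression, with the prefactor $\tfrac{1}{2}(1 + (\frac{p}{3})^k)$ recording whether the $-3$-twist contributes.

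For part (2), when $A$ is $p$-elementary and contains an isotropic vector, Witt's theorem identifies the $\OA$-orbits on $A$ with the level sets $\{y \in A : q(y) = c\}$, with $\{0\}$ and the nonzero isotropic vectors forming separate orbits at $c = 0$. For $x$ with $q(x) = c$, I would expand
\[
\sum_{y : q(y) = c} e((x, y)) = \tfrac{1}{p} \sum_{t \in \Fp} e(-tc/p) \sum_{y \in A} e(tq(y) + (x, y))
\]
by additive orthogonality, complete the square in $y$ so that for $t \neq 0$ the inner sum becomes $e(-t^{-1} q(x)) \cdot (\frac{t}{p})^m G_0$ with $G_0 = \sum_y e(q(y))$ the classical Gauss sum, then multiply by the twist $e(-c/p)$ and sum over $c$. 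The whole computation reduces to the character sum
\[
\sum_{t \neq 0} \Bigl(\tfrac{t}{p}\Bigr)^m \sum_{c \neq 0} e\bigl(-c(1 + t + t^{-1})/p\bigr),
\]
whose inner $c$-sum is $-1$ unless $t^2 + t + 1 \equiv 0 \pmod p$. That equation has solutions in $\Fpu$ precisely when $p \equiv 1 \pmod 3$, and for $p > 3$ the two solutions (primitive cube roots of unity) are squares in $\Fpu$. Combining the surviving $t$-terms with the contribution of the nonzero isotropic orbit at $c = 0$ gives the stated dichotomy in $p \bmod 3$. The anisotropic plane is handled directly, using that $A \setminus \{0\}$ is a single $\OA$-orbit.

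For part (3), the product formula, I would adapt the proof of Theorem \ref{main GAOA}(3). Because $A_1 \cong \Z/p^k$ with $k > 1$ has strictly larger exponent than the elementary $A_2$, both the cyclic subgroup $p^{k-1}A \cong \Z/p$ and the filtration $p^{k-1}A \subset A[p] \subset A$ are characteristic, yielding natural homomorphisms $\OA \to \{\pm 1\} = {\rm O}(p^{k-1}A)$ and $\OA \to {\rm O}(A_2)$ via the induced action on $A[p]/p^{k-1}A \cong A_2$. Since $q = q_1 \oplus q_2$ and the twist $e(-q(x)) = e(-q_1(x_1)) \cdot e(-q_2(x_2))$ split along $A = A_1 \oplus A_2$, reorganising the double sum defining $\gaoa$ according to these induced actions should factor it as $G'(A_1, {\rm O}(A_1)) \cdot G'(A_2, {\rm O}(A_2))$.

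The main obstacle is precisely part (3): one must show that the kernel of the natural map $\OA \to \{\pm 1\} \times {\rm O}(A_2)$, which consists of translation-like elements mixing $A_1$ and $A_2$ via off-diagonal blocks $A_2 \to p^{k-1}A_1$ and $A_1 \to A_2$, does not obstruct the fibrewise factorisation. This cancellation already appears in the proof of Theorem \ref{main GAOA}(3) for the untwisted sum; for $\gaoa$ the extra point is that $e(-q(x))$, being orbit-invariant, is constant on such fibres and may be pulled out before averaging over the kernel, after which the untwisted argument applies.
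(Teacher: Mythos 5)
Your part (1) is the paper's own argument: with $\OA=\{\pm1\}$ the sum collapses to $\frac{1}{2}\bigl(G'(A)+G'(A(-3))\bigr)$, which is the paper's $\frac{1}{2}\{G(A(2))+G(A(-6))\}$ up to square scalings, and the tables of \cite{B-E-W} plus $\left(\frac{-3}{p}\right)=\left(\frac{p}{3}\right)$ finish. Your part (2) in the isotropic case is a genuinely different route. The paper fixes a splitting $A=U\oplus B$, partitions the level sets as $\bigsqcup_{\lambda}U_{\lambda}\times B_{\mu-\lambda}$, counts $|B_{\alpha}|$ from Gerstein separately for $m$ even and odd, and evaluates $G'(U,{\rm O}(U))$ together with an auxiliary sum $F'$; your orthogonality-plus-completing-the-square computation treats both parities at once and collapses everything to $\gaoa=G'(A)\sum_{t^2+t+1=0}\left(\frac{t}{p}\right)^m$, hence $0$ for $p\equiv2\ (3)$ and $2G'(A)$ for $p\equiv1\ (3)$; your observation that the primitive cube roots of unity are squares is correct, since $p\equiv1\ (3)$ and $p$ odd force $6\mid p-1$. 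What you gain is uniformity in $m$; what you still owe is the classical evaluation of $G'(A)$ (Milgram, or a product of one-dimensional Gauss sums) and its matching with the displayed eighth-root constant, which is where all the remaining bookkeeping sits, whereas the paper's counting is self-contained.

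There are two genuine problems. First, your disposal of the anisotropic plane rests on a false statement: $A\setminus\{0\}$ is \emph{not} a single $\OA$-orbit. Isometries preserve norms, each $\mu\in\Fpu$ is the norm of exactly $p+1$ vectors, so there are $p-1$ nonzero orbits; equivalently $|\OA|=2(p+1)<p^2-1$. A direct computation from transitivity would give nonsense. The repair lies inside your own method: by Witt the nonzero orbits are precisely the level sets $\{q=c\}$ with $c\in\Fpu$, there is no isotropic correction term, and $\left(\frac{t}{p}\right)^2=1$, so the same expansion yields $\gaoa=1+\frac{G'(A)}{p}\bigl(pN-(p-1)\bigr)$ with $N=\#\{t:t^2+t+1=0\}=1+\left(\frac{p}{3}\right)$; since $G'(A)=\left(\frac{-d(A)}{p}\right)p=-p$ by anisotropy, this gives $-\left(\frac{p}{3}\right)p$. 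The paper instead reruns the double-cover argument of Proposition \ref{prop:Gauss anisotropic plane} with the shifted map $\varphi'$ and counts fibers; either way, the step as you wrote it fails and must be replaced.

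Second, in part (3) the phrase ``after which the untwisted argument applies'' conceals exactly the point where $p>3$ enters, and your sketch never locates that hypothesis. Pulling the orbit-invariant twist $e(-q(x))$ out of the inner sums does not reduce $\gaoa$ to $\GAOA$: on the stratum $A_0=\bigl(({\Z}/p^k)^{\times}e\bigr)\times B$ the orbit contributions are $\zeta_{p^k}^{2^{-1}ax^2}G(B(-2))+\zeta_{p^k}^{-3\cdot2^{-1}ax^2}G(B(2))$, and killing them after summing over $x$ requires Lemma \ref{redundant Gauss sum =0} with coefficients $a/2$ and $-3a/2$ --- units precisely because $p>3$; the same twisted vanishing is invoked once more to identify the $A_1$-stratum sum with $G'(C,{\rm O}(C))\cdot G'(B,{\rm O}(B))$. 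For $p=3$ the coefficient $-3a/2$ is not a unit, the $A_0$-contribution genuinely survives when $k=2$, and the paper's proof produces a compensating term; an argument that merely defers to the untwisted case cannot detect this. Your characteristic-filtration maps $\OA\to\{\pm1\}$ and $\OA\to{\rm O}(A_2)$ are a clean invariant repackaging of Proposition \ref{prop:structure of OA}, but they do not by themselves produce the factorization: you still need the explicit orbit description of Lemma \ref{G-action on A_i p>2} on both strata before the double sum can be reorganized. As written, part (3) is a plausible plan with the decisive vanishing estimate missing, not a proof.
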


One finds that there are many cases of vanishing for both ${\GAOA}$ and ${\gaoa}$. 
One also finds that the absolute values of ${\GAOA}$ and ${\gaoa}$ are either $0$ or $\sqrt{|A|}$ or $2\sqrt{|A|}$ 
in all cases we calculate. 

Our evaluation of ${\GAOA}$ and ${\gaoa}$ is done by direct calculation based on the explicit description of the ${\OA}$-orbits. 
This seems to be difficult for more general quadratic forms. 
The author does not know if there is more systematic way to evaluate ${\GAOA}$ and ${\gaoa}$, 
and whether a uniform arithmetic formula holds for general finite quadratic forms.


\section{Basic definitions}\label{sec:def}

A \textit{finite quadratic form} is a finite abelian group $A$ equipped with a ${\QZ}$-valued quadratic form $q: A \to {\QZ}$. 
This means that $q(ax)=a^2q(x)$ for $a\in{\Z}$ and $x\in A$, 
and that the ${\QZ}$-valued pairing 
\begin{equation*}
(x, y) = q(x+y) - q(x) - q(y), \quad x, y \in A, 
\end{equation*}
is symmetric bilinear. 
Unless stated otherwise, we assume that $(A, q)$ is nondegenerate, namely the bilinear form $(\, ,\, )$ is nondegenerate. 
We often abbreviate $A=(A, q)$. 
The orthogonal direct sum of two finite quadratic forms $A_1$, $A_2$ is written as $A_1\oplus A_2$. 
A map $A\to A$ is called isometry if it is an isomorphism of abelian groups and preserves $q$. 
The group of isometries of $(A, q)$ is denoted by ${\OA}={\rm O}(A, q)$ and called the orthogonal group of $(A, q)$. 
Finite quadratic forms are also called \textit{finite quadratic modules} in some literatures. 
A standard example is the discriminant form $L^{\vee}/L$ of an even lattice $L$, 
where the quadratic form $q$ is defined by $q(x+L)=(x, x)/2+{\Z}$ for $x\in L^{\vee}$. 
The associated bilinear form is the mod ${\Z}$ reduction of the pairing on the dual lattice $L^{\vee}$.  

Let $A_p$ be the $p$-component of $A$. 
The canonical decomposition $A=\oplus_{p}A_p$ as an abelian group is automatically an orthogonal decomposition. 
Indecomposable quadratic forms on $p$-groups are classified by Wall \cite{Wa}. 
When $p>2$, quadratic forms on $p$-groups can be reconstructed from the associated bilinear form by $q(x)=2^{-1}(x, x)$, 
as $2$ is invertible in ${\Zp}$. 
We mainly consider the bilinear form $(\; , \; )$ in this case. 
The $2$-adic case is more subtle. 

Let $\Gamma$ be a subgroup of ${\OA}$. 
We define the equivariant Gauss sum of $(A, \Gamma)$ as the exponential sum 
\begin{equation*}
G(A, \Gamma) = \sum_{[x]\in\Gamma\backslash A}\sum_{y\in \Gamma x} e((x, y)). 
\end{equation*}
This is well-defined: 
if we use $\gamma x$ in place of $x$ where $\gamma\in\Gamma$, then 
\begin{equation*}
\sum_{y\in \Gamma x} e((\gamma x, y)) = \sum_{y\in \Gamma x} e((x, \gamma^{-1}y)) = \sum_{y\in \Gamma x} e((x, y)). 
\end{equation*}
Similarly, we define the equivariant Gauss sum of the second kind by 
\begin{equation*}
G'(A, \Gamma) = \sum_{[x]\in\Gamma\backslash A} e(-q(x)) \sum_{y\in \Gamma x} e((x, y)). 
\end{equation*}
When $\Gamma$ is trivial,
\begin{equation*}
G(A,  \{ {\rm id}\}) = \sum_{x\in A}e((x, x)), \qquad G'(A,  \{ {\rm id}\}) = \sum_{x\in A}e(q(x)), 
\end{equation*}
are the classical quadratic Gauss sum. 
We will write $G(A)= G(A,  \{ {\rm id}\})$ and $G'(A)= G'(A,  \{ {\rm id}\})$. 
The evaluation of $G(A)$ and $G'(A)$ is well-known: see \cite{B-E-W}.  
We especially have the Milgram formula $G'(A)=e(\sigma(A)/8)\sqrt{|A|}$, 
where $\sigma(A)\in{\Z}/8{\Z}$ is the signature of $A$. 

Our object of study is the case $\Gamma={\OA}$. 
In the rest of this paper, for two ${\OA}$-orbits $[x], [y]\in {\OA}\backslash A$ we will write 
\begin{equation*}\label{orbital pairing}
\langle [x], [y] \rangle_{A} = \sum_{y'\in {\OA}y} e((x, y')). 
\end{equation*} 
(We often omit $A$ in the subscript.) 
This sum does not depend on the choice of an element $x$ from $[x]$, as checked above. 
In particular, since $-{\rm id}\in{\OA}$, we see that $\langle [x], [y] \rangle$ is a real number. 
Note that $\langle [x], [y] \rangle \ne \langle [y], [x] \rangle$ in general. 
The equivariant Gauss sums can be written as   
\begin{equation*}
{\GAOA} = \sum_{[x]\in {\OA}\backslash A} \langle [x], [x] \rangle_{A},  
\end{equation*}
\begin{equation*}
{\gaoa} = \sum_{[x]\in {\OA}\backslash A} e(-q(x)) \langle [x], [x] \rangle_{A}. 
\end{equation*}
Then ${\GAOA}$ is a real algebraic integer. 

We first localize the equivariant Gauss sum to each prime. 

\begin{lemma}\label{product decomposition Gauss sum}
Let $A$ be of the form $A=A_1\oplus A_2$ 
and assume that $\Gamma\subset {\OA}$ can be decomposed as 
$\Gamma=\Gamma_1\times \Gamma_2$ such that $\Gamma_i \subset {\rm O}(A_i)$. 
Then 
\begin{equation*}
G(A, \Gamma) = G(A_1, \Gamma_1) \cdot G(A_2, \Gamma_2), \qquad 
G'(A, \Gamma) = G'(A_1, \Gamma_1) \cdot G'(A_2, \Gamma_2). 
\end{equation*}
\end{lemma}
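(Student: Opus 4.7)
The plan is to observe that every hypothesis of the lemma is compatible with the direct sum decomposition, so the sum literally factors term by term after reindexing.

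First I would recall that for $A=A_1\oplus A_2$ the bilinear form and quadratic form split additively: writing $x=x_1+x_2$ and $y=y_1+y_2$ with $x_i,y_i\in A_i$, we have $(x,y)_A=(x_1,y_1)_{A_1}+(x_2,y_2)_{A_2}$ and $q_A(x)=q_{A_1}(x_1)+q_{A_2}(x_2)$. Under the character $\e$ these turn into products. Next, since $\Gamma=\Gamma_1\times\Gamma_2$ acts componentwise (because $\Gamma_i\subset {\rm O}(A_i)$ fixes $A_{3-i}$), the orbit $\Gamma x$ equals $\Gamma_1 x_1 + \Gamma_2 x_2$ as a subset of $A_1\oplus A_2$, and the natural bijection
\begin{equation*}
\Gamma\backslash A \;\longleftrightarrow\; (\Gamma_1\backslash A_1)\times(\Gamma_2\backslash A_2), \qquad [x]\mapsto ([x_1],[x_2]),
\end{equation*}
partitions the outer sum.

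With these identifications in place, I would compute
\begin{equation*}
G(A,\Gamma)=\sum_{[x_1],[x_2]}\;\sum_{y_1\in\Gamma_1 x_1}\sum_{y_2\in\Gamma_2 x_2} \e\bigl((x_1,y_1)+(x_2,y_2)\bigr),
\end{equation*}
factor the exponential as a product, and separate the two double sums to get $G(A_1,\Gamma_1)\cdot G(A_2,\Gamma_2)$. The argument for $G'$ is identical after inserting the factor $\e(-q(x))=\e(-q(x_1))\e(-q(x_2))$, which splits along the $[x_i]$ sums in the same way.

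There is no real obstacle: the only thing to check carefully is that the componentwise action of $\Gamma_1\times\Gamma_2$ on $A_1\oplus A_2$ produces orbits of the product form $\Gamma_1 x_1 + \Gamma_2 x_2$, which is immediate from the assumption $\Gamma_i\subset {\rm O}(A_i)$. Everything else is the standard Fubini-style rearrangement used to establish the classical product formula $G(A_1\oplus A_2)=G(A_1)G(A_2)$, which is recovered here as the special case $\Gamma=\{{\rm id}\}$.
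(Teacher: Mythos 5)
Your proof is correct and follows essentially the same route as the paper's: both rest on the orbit factorization $\Gamma x = (\Gamma_1 x_1)\times(\Gamma_2 x_2)$, the induced bijection $\Gamma\backslash A \simeq (\Gamma_1\backslash A_1)\times(\Gamma_2\backslash A_2)$, and the additive splitting of the pairing (and of $q$ for the $G'$ case), after which the sum factors by a Fubini-style rearrangement. No gaps; your explicit remark that the orbit structure is the one point needing care matches exactly what the paper's computation implicitly uses.
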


\begin{proof}
For $x=(x_1, x_2)\in A_1\oplus A_2$ the orbit $\Gamma x$ is decomposed as   $\Gamma x = (\Gamma_1x_1) \times (\Gamma_2x_2)$. 
Hence
\begin{eqnarray*}
G(A, \Gamma) 
&=& 
\sum_{[(x_1, x_2)]\in\Gamma\backslash A} \sum_{(y_1, y_2)\in \Gamma x} e((x_1, y_1)+(x_2, y_2)) \\ 
&=&  
\sum_{[x_1]\in\Gamma_1\backslash A_1} \sum_{[x_2]\in\Gamma_2\backslash A_2} \sum_{y_1\in\Gamma_1x_1} \sum_{y_2\in\Gamma_2x_2} 
e((x_1, y_1))\cdot e((x_2, y_2)) \\ 
&=& G(A_1, \Gamma_1) \cdot G(A_2, \Gamma_2). 
\end{eqnarray*} 
The case of $G'(A, \Gamma)$ is similar. 
\end{proof}

For $\Gamma={\OA}$ we have the canonical decomposition ${\OA}=\prod_{p}{\rm O}(A_p)$. 
Hence we obtain  

\begin{corollary}\label{localize Gauss sum}
We have 
\begin{equation*}
{\GAOA} = \prod_{p}G(A_p, {\rm O}(A_p)), \quad 
{\gaoa} = \prod_{p}G'(A_p, {\rm O}(A_p)). 
\end{equation*}
\end{corollary}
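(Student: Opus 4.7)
The plan is to reduce the corollary to a direct application of Lemma \ref{product decomposition Gauss sum} by first verifying that the $p$-primary decomposition is compatible with the orthogonal group in the sense required by that lemma.

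First, I would check that the decomposition $A=\oplus_p A_p$ is orthogonal. For $x\in A_p$ and $y\in A_q$ with $p\neq q$, the pairing $(x,y)\in\QZ$ is killed both by the order of $x$ (a power of $p$) and by the order of $y$ (a power of $q$). Since these orders are coprime, $(x,y)=0$, so the decomposition is orthogonal as claimed in \S\ref{sec:def}.

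Next, I would establish the identification $\OA=\prod_p {\rm O}(A_p)$. Each $p$-component $A_p$ is a characteristic subgroup of $A$ (it is intrinsically the subgroup of elements whose order is a power of $p$), hence any isometry $\gamma\in\OA$ satisfies $\gamma(A_p)=A_p$ and therefore restricts to an isometry $\gamma_p\in{\rm O}(A_p)$. Conversely, since the decomposition is orthogonal, any tuple $(\gamma_p)_p$ with $\gamma_p\in{\rm O}(A_p)$ assembles into an isometry of $A$. This gives the product decomposition of $\OA$.

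Finally, I would apply Lemma \ref{product decomposition Gauss sum}. Strictly speaking, that lemma handles only two factors, so I would proceed by induction on the (finite) number of primes occurring in the decomposition of $A$: write $A=A_p\oplus A'$ where $A'=\oplus_{q\neq p}A_q$, observe that $\OA={\rm O}(A_p)\times{\rm O}(A')$ by the previous step, apply the lemma to get
\begin{equation*}
G(A,\OA)=G(A_p,{\rm O}(A_p))\cdot G(A',{\rm O}(A')),
\end{equation*}
and iterate on $A'$. The same argument works verbatim for $G'$. There is no serious obstacle; the only point requiring care is the characteristic nature of $A_p$ inside $A$, which is routine since it is defined purely in terms of the abelian group structure.
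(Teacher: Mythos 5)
Your proposal is correct and follows exactly the paper's route: the paper deduces the corollary from Lemma \ref{product decomposition Gauss sum} via the canonical decomposition ${\OA}=\prod_p {\rm O}(A_p)$, which is precisely what you do. You merely make explicit the routine details the paper leaves implicit (orthogonality of the $p$-primary decomposition, the characteristic-subgroup argument, and the induction extending the two-factor lemma to finitely many primes).
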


The evaluation of ${\GAOA}$ and ${\gaoa}$ is thus reduced to the case of quadratic forms on $p$-groups. 
For ${\GAOA}$ we study the case $p>2$ in \S \ref{sec:p odd} and the case $p=2$ in \S \ref{sec:p=2}. 
We study ${\gaoa}$ in \S \ref{sec:2nd kind}.


\section{Nondyadic case}\label{sec:p odd}

Let $p>2$. 
Let $A$ be an abelian $p$-group endowed with a ${\QZ}$-valued symmetric bilinear form $( \: , \: )$. 
In this section we evaluate ${\GAOA}$ for the following quadratic forms: 
(1) cyclic forms, (2) $p$-elementary forms (quadratic spaces over ${\Fp}$), 
and (3) direct sum $A=A_1\oplus A_2$ where $A_i$ is as in $(i)$. 
We write $A(\varepsilon)$ for the scaling of $A$ by $\varepsilon\in{\Z}_{p}^{\times}$. 
The symmetric bilinear form on ${\Z}/p^k$ defined by $(x, y)=axy/p^k$, $a\in{\Zpu}$, is denoted by $A_{p^k, a}$. 
We write $\left( \frac{\cdot}{p} \right)$ for the Legendre symbol. 
We also write $\zeta_{p^k}=e(1/p^k)$.

\subsection{Cyclic forms}\label{ssec:cyclic p>2}

\begin{proposition}
Let $A=A_{p^k, a}$ with $p>2$. 
Then  
\begin{equation*}
{\GAOA} = 
\begin{cases}
0, & p \equiv 3 \; (4), \; k \; \textrm{odd}, \\ 
\left( \frac{a}{p}\right)^k \sqrt{p^k}, & \textrm{otherwise}.
\end{cases}
\end{equation*}
\end{proposition}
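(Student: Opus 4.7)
The plan is to reduce ${\GAOA}$ to the classical quadratic Gauss sum on $\Z/p^k$ and invoke its known evaluation. Every isometry of $A = A_{p^k, a}$ is multiplication by some $u \in (\Z/p^k)^{\times}$, and preservation of $(x, y) = axy/p^k$ forces $u^2 \equiv 1 \pmod{p^k}$. Since $p$ is odd, $(\Z/p^k)^{\times}$ is cyclic of order $(p-1)p^{k-1}$, so its $2$-torsion is exactly $\{\pm 1\}$ and ${\OA} = \{\pm 1\}$. The orbits are therefore $\{0\}$ and the pairs $\{x, -x\}$ with $x \neq 0$, each of size two since $2$ is a unit in $\Z/p^k$.

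Next I compute the orbital pairing. For $x \neq 0$,
$$
\langle [x], [x] \rangle = e((x, x)) + e((x, -x)) = e(ax^2/p^k) + e(-ax^2/p^k),
$$
and $\langle [0], [0] \rangle = 1$. Since $e(\pm ax^2/p^k)$ depends only on $x^2$, I may replace the sum over nonzero orbits by $\tfrac{1}{2}$ times the sum over nonzero $x \in A$; combining with the $x = 0$ term, I obtain
$$
{\GAOA} = \tfrac{1}{2}\bigl(g(a) + g(-a)\bigr), \qquad g(c) := \sum_{x=0}^{p^k - 1} e(cx^2/p^k).
$$

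Now I apply the classical quadratic Gauss sum evaluation (see \cite{B-E-W}): for $p$ odd and $c \in \Zpu$,
$$
g(c) = \left( \frac{c}{p} \right)^k \varepsilon_{p^k} \sqrt{p^k},
$$
where $\varepsilon_{p^k} = 1$ if $p^k \equiv 1 \pmod 4$ and $\varepsilon_{p^k} = \sqrt{-1}$ if $p^k \equiv 3 \pmod 4$. Using $\left(\frac{-a}{p}\right)^k = \left(\frac{-1}{p}\right)^k \left(\frac{a}{p}\right)^k$, the proposition follows by case analysis. When $p^k \equiv 1 \pmod 4$ (that is, $p \equiv 1 \pmod 4$, or $p \equiv 3 \pmod 4$ with $k$ even), we have $\left(\frac{-1}{p}\right)^k = 1$, so $g(a) = g(-a) = \left(\frac{a}{p}\right)^k \sqrt{p^k}$ and ${\GAOA} = \left(\frac{a}{p}\right)^k \sqrt{p^k}$. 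When $p \equiv 3 \pmod 4$ and $k$ is odd, $\left(\frac{-1}{p}\right)^k = -1$, forcing $g(a) + g(-a) = 0$ and hence ${\GAOA} = 0$.

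The only point that requires care is the symmetrization in the second step, which hinges on every nonzero orbit having size exactly two — a feature peculiar to $p > 2$. The rest is bookkeeping with the Legendre symbol and the factor $\varepsilon_{p^k}$, and the parity of $\left(\frac{-1}{p}\right)^k$ is precisely what decides whether $g(a)$ and $g(-a)$ reinforce or cancel, reproducing the dichotomy in the statement.
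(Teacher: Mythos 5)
Your proof is correct and takes essentially the same route as the paper: the paper likewise observes that ${\OA}=\{\pm\operatorname{id}\}$ with $-\operatorname{id}$ fixing no nonzero element, reduces ${\GAOA}$ to ${\rm Re}(G(A))$ --- which is exactly your $\tfrac{1}{2}\bigl(g(a)+g(-a)\bigr)$, since $g(-a)=\overline{g(a)}$ --- and then invokes the same classical evaluation (\cite{B-E-W} Theorem 1.5.2). Your explicit verification that ${\OA}=\{\pm 1\}$ and your bookkeeping with $\left(\frac{-1}{p}\right)^k$ merely spell out steps the paper leaves implicit.
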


\begin{proof}
The orthogonal group ${\OA}$ consists of $\pm{\rm id}$, and  $-{\rm id}$ fixes no nonzero element. 
Hence 
\begin{equation*}
{\GAOA} 
= 1 + \frac{1}{2}\sum_{\stackrel{x\in{\Z}/p^k}{x\ne0}}(\zeta_{p^k}^{ax^2}+\zeta_{p^k}^{-ax^2}) 
= {\rm Re}(G(A)). 
\end{equation*}
By \cite{B-E-W} Theorem 1.5.2 we have 
\begin{equation*}
G(A) = \left( \frac{a}{p^k}\right) \sqrt{p^k} \times 
\begin{cases}
1,            & p^k \equiv 1 \; (4),  \\ 
\sqrt{-1}, & p^k \equiv 3 \; (4).
\end{cases}
\end{equation*}
So ${\rm Re}(G(A))=0$ when $p^k \equiv 3\; (4)$, 
and ${\rm Re}(G(A))=\left( \frac{a}{p}\right)^k \sqrt{p^k}$ otherwise. 
\end{proof}

\subsection{$p$-elementary forms}

Let $A$ be a symmetric form on a $p$-elementary group with $p>2$. 
We can naturally view $A$ as a quadratic space over ${\Fp}$. 
By Witt's extension theorem (\cite{Ge} Theorem 2.44), 
two nonzero vectors of $A$ are ${\OA}$-equivalent if and only if they have the same norm. 
For $\mu\in{\Fp}$ let $A_{\mu}$ be the subset of $A$ of vectors $x$ such that $(x, x)=\mu$.  
Then 
\begin{equation*}
A = \{ 0 \} \cup (A_0\backslash\{ 0 \}) \cup \bigcup_{\mu\in{\Fpu}}A_{\mu} 
\end{equation*}
is the ${\OA}$-orbit decomposition of $A$. 

We first consider the case $A$ has a nonzero isotropic vector. 
In that case $A$ contains as a direct summand the \textit{hyperbolic plane} $U$, namely 
the symmetric form on ${\Fp}\oplus{\Fp}$ given by the Gram matrix 
$\begin{pmatrix} 0 & 1 \\ 1 & 0 \end{pmatrix}$. 
As the first step we calculate the case $A=U$. 

\begin{lemma}\label{claim:Gauss U}
We have 
$G(U, {\rm O}(U)) = (1+(-1)^{(p-1)/2})p$. 
\end{lemma}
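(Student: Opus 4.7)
The plan is to compute $G(U, {\rm O}(U))$ by evaluating the orbital self-pairing $\langle [x], [x] \rangle$ on each ${\rm O}(U)$-orbit separately and then summing. First I would describe ${\rm O}(U)$ explicitly: writing $U = \Fp \oplus \Fp$ with bilinear form $((x_1,x_2),(y_1,y_2)) = (x_1 y_2 + x_2 y_1)/p$, a direct check shows that ${\rm O}(U)$ consists of the diagonal maps $(x_1, x_2) \mapsto (t x_1, t^{-1} x_2)$ and the anti-diagonal maps $(x_1, x_2) \mapsto (t x_2, t^{-1} x_1)$ with $t \in \Fpu$, so $|{\rm O}(U)| = 2(p-1)$.

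By the Witt extension theorem recalled just above, the nonzero ${\rm O}(U)$-orbits on $U$ are classified by the norm $(x,x) \in \Fp$, with the isotropic locus splitting off the origin. Explicitly: the fixed point $\{0\}$; the orbit of $(1,0)$, consisting of all $2(p-1)$ nonzero isotropic vectors $\{(a,0),(0,a) : a \in \Fpu\}$; and, for each $\mu \in \Fpu$, the orbit $A_\mu$ of the representative $(1, \mu/2)$, equal to $\{(t, \mu/(2t)) : t \in \Fpu\}$ and of size $p-1$.

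Next I would compute each orbit's contribution to $G(U, {\rm O}(U)) = \sum_{[x]} \langle [x], [x] \rangle$. The zero orbit contributes $1$. For the nonzero isotropic orbit,
\begin{equation*}
\langle [(1,0)], [(1,0)] \rangle = (p-1) + \sum_{a \in \Fpu} e(a/p) = (p-1) - 1 = p - 2.
\end{equation*}
For each $A_\mu$, pairing $(1,\mu/2)$ against $(t, \mu/(2t))$ produces the Kloosterman-type expression
\begin{equation*}
\langle [(1,\mu/2)], [(1,\mu/2)] \rangle = \sum_{t \in \Fpu} e\!\left(\frac{\mu(t+t^{-1})}{2p}\right).
\end{equation*}

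The crux of the proof is to sum these last expressions over $\mu \in \Fpu$ and interchange the order of summation. The resulting double sum $\sum_{t \neq 0} \sum_{\mu \neq 0} e(\mu(t+t^{-1})/(2p))$ has inner sum equal to $p - 1$ when $t + t^{-1} \equiv 0$ in $\Fp$ and $-1$ otherwise. Since $t^2 \equiv -1 \pmod{p}$ has two solutions when $p \equiv 1 \pmod{4}$ and none when $p \equiv 3 \pmod{4}$, the total anisotropic contribution equals $2(p-1) - (p - 3) = p + 1$ in the first case and $-(p-1)$ in the second. Combining with $1 + (p-2) = p - 1$ yields $2p$ or $0$ respectively, which is exactly $(1 + (-1)^{(p-1)/2})p$. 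The main delicate point is this orthogonality step: evaluating the fixed-$\mu$ Kloosterman sum $\langle [(1,\mu/2)], [(1,\mu/2)] \rangle$ admits no simple closed form individually, but summing over $\mu$ first reduces the question to the solvability of $t^2 = -1$ in $\Fp$.
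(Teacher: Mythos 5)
Your proof is correct and takes essentially the same route as the paper: both classify the ${\rm O}(U)$-orbits by norm via Witt's theorem, parametrize the anisotropic orbit $U_\mu$ as $\{(t,\mu/(2t)) : t\in\Fpu\}$, interchange the resulting double sum over $\mu$ and $t$, and reduce everything to the solvability of $t^2=-1$ in $\Fp$. The only difference is bookkeeping: the paper formally extends the pairing $\langle \mu,\lambda\rangle_U$ to $\mu,\lambda=0$ so that the zero and isotropic orbits are absorbed into a single sum over all $\mu\in\Fp$ (inner sum $p$ or $0$), whereas you evaluate those two orbits by hand and sum only over $\mu\ne 0$ (inner sum $p-1$ or $-1$), arriving at the same totals $2p$ and $0$.
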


\begin{proof}
Let $u_1, u_2$ be the standard basis of $U$. 
Then $U_0={\Fp}u_1 \cup {\Fp}u_2$, 
and for $\mu\ne0$ we have $U_{\mu} = \{ \nu u_1 + \nu^{-1}(2^{-1}\mu)u_2 \; | \;  \nu\in{\Fpu} \}$. 
As a reference vector in $U_{\mu}$ for $\mu\in{\Fp}$ we take 
\begin{equation*}\label{eqn:ref vec in U_mu}
x_{\mu} = u_1 + (2^{-1}\mu) u_2 \in U_{\mu}. 
\end{equation*}
We write for $\mu, \lambda \in{\Fp}$  
\begin{equation}\label{eqn:Gauss sum U-split 2}
\langle \mu, \lambda \rangle_U = \sum_{x\in U_{\lambda}} \zeta_{p}^{(x_{\mu}, x)} 
= \sum_{\nu\in{\Fpu}} \zeta_{p}^{2^{-1}(\mu\nu+\lambda\nu^{-1})}. 
\end{equation} 
We have 
\begin{equation*}
\langle \mu, \lambda \rangle_U = 
\begin{cases}
\langle U_{\mu}\backslash \{0\}, U_{\lambda} \rangle, & \lambda\ne0, \\ 
\langle U_{\mu}\backslash \{ 0 \}, U_{0}\backslash \{ 0 \} \rangle +1, & \lambda=0. 
\end{cases}
\end{equation*}
The second equality of \eqref{eqn:Gauss sum U-split 2} holds even when $\lambda=0$ 
(both sides equal to $-1$ when $\mu\ne0$, and to $p-1$ when $\mu=0$). 
This expression shows that $\langle \mu, \lambda \rangle_U=\langle \lambda, \mu \rangle_U$. 

By definition we have 
\begin{equation}\label{eqn: GUOU by def}
G(U, {\rm O}(U))  = \sum_{\mu\in{\Fp}} \langle \mu, \mu \rangle_{U} 
=\sum_{\nu\in{\Fpu}} \sum_{\mu\in{\Fp}} \zeta_p^{2^{-1}\mu(\nu+\nu^{-1})}. 
\end{equation}
When $p\equiv 1 \; (4)$, the equation $\nu+\nu^{-1}=0$ has two solutions, namely the square roots of $-1$, hence $G(U, {\rm O}(U)) = 2p$. 
When $p\equiv 3 \; (4)$, we have $\nu+\nu^{-1}\ne0$ for any $\nu\in{\Fpu}$. 
Thus $G(U, {\rm O}(U))=0$ in this case. 
\end{proof}

We consider the general case. 

\begin{proposition}\label{prop:Gauss U-split}
Let $A$ be an $m$-dimensional quadratic space over ${\Fp}$ with $p>2$ that contains a nonzero isotropic vector. 
Let $d(A)\in{\Fpu}/({\Fpu})^2$ be the discriminant of $A$. 
When $p\equiv 1 \; (4)$, we choose $\delta\in{\Fp}$ with $\delta^2=-1$. 
Then 
\begin{equation*}
{\GAOA} = 
\begin{cases} 
0, & p\equiv 3 \; (4), \\ 
2\left( \frac{2\delta}{p} \right)^m \left( \frac{d(A)}{p} \right) \sqrt{|A|}, & p\equiv 1 \; (4).  
\end{cases} 
\end{equation*}
\end{proposition}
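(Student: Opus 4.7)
The plan is to reduce the equivariant Gauss sum on $A$ to a classical Gauss sum on $B := U^{\perp}$ by exploiting the presence of a hyperbolic summand. By Witt's extension theorem the $\OA$-orbits on $A$ are $\{0\}$, $A_0 \setminus \{0\}$, and $A_{\mu}$ for $\mu \in \Fpu$, so arguing exactly as in Lemma \ref{claim:Gauss U} one can write
\[
\GAOA = \sum_{\mu \in \Fp} \sum_{y \in A_\mu} e((x_\mu, y)),
\]
where $x_\mu \in A_\mu$ is any reference vector (and $x_0 \in A_0$ is chosen nonzero). The key choice is to place all $x_\mu$ inside $U$: I would set $x_\mu = u_1 + (\mu/2) u_2$ as in Lemma \ref{claim:Gauss U}, so that the reference depends \emph{linearly} on $\mu$.

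Next, I would decompose $y \in A$ as $y = \alpha u_1 + \beta u_2 + z$ with $\alpha, \beta \in \Fp$ and $z \in B$; a direct computation gives $(y, y) = 2\alpha\beta + (z, z)$ and $(x_\mu, y) = \beta + (\mu/2)\alpha$. Since the triple $(\alpha, \beta, z)$ determines $\mu = (y, y)$ uniquely, the outer sum over $\mu$ collapses: substituting $\mu = 2\alpha\beta + (z, z)$ gives
\[
\GAOA = \sum_{\alpha, \beta \in \Fp} \sum_{z \in B} \zeta_p^{\beta(1 + \alpha^2) + \alpha (z, z)/2}.
\]
The sum over $\beta$ kills every term with $\alpha^2 \neq -1$. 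When $p \equiv 3 \pmod 4$ no such $\alpha$ exists and $\GAOA = 0$ immediately; when $p \equiv 1 \pmod 4$ only $\alpha = \pm \delta$ survive, leaving
\[
\GAOA = p \bigl[ G(B(\delta/2)) + G(B(-\delta/2)) \bigr],
\]
where $B(\lambda)$ denotes the scaling of $B$ by $\lambda$.

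Finally, I would evaluate the classical Gauss sum on the $(m-2)$-dimensional $\Fp$-quadratic space $B(\lambda)$ by diagonalizing and invoking $G_p = \sqrt{p}$ (valid for $p \equiv 1 \pmod 4$); this yields $G(B(\lambda)) = \left(\frac{\lambda^{m-2} d(B)}{p}\right) p^{(m-2)/2}$. Because $-1$ is a square mod $p$, the two contributions $\lambda = \pm \delta/2$ coincide, so combining $p \cdot p^{(m-2)/2} = \sqrt{|A|}$, the relation $d(A) = -d(B)$ (from $d(U) = -1$), and the elementary identity $\left(\frac{(\delta/2)^{m-2}}{p}\right) = \left(\frac{2\delta}{p}\right)^m$ (which holds since $\delta^2$ and $2^{-2}$ are squares mod $p$) should assemble into the stated formula. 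The main obstacle is purely bookkeeping with Legendre symbols and powers of $p$; the substantive step is the linear-reference trick that collapses the $\mu$-sum and reduces the problem to a standard Gauss sum on $B$.
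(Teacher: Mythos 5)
Your proof is correct, and it takes a genuinely different route from the paper's. The two arguments share the same setup --- Witt's theorem for the orbit decomposition, the splitting $A=U\oplus B$, and even the same reference vectors $x_\mu=u_1+(\mu/2)u_2$ placed inside $U$ (the paper's $(x_\mu,0)\in U_\mu\times B_0$), with the zero orbit absorbed into the $\mu=0$ term exactly as in Lemma \ref{claim:Gauss U} --- but they diverge at the main computation. The paper keeps the partition $A_\mu=\bigsqcup_\lambda U_\lambda\times B_{\mu-\lambda}$, writes ${\GAOA}=\sum_{\mu,\lambda}|B_{\mu-\lambda}|\langle\mu,\lambda\rangle_U$, and feeds in the representation numbers $|B_\alpha|$ from Gerstein; this forces a case split on the parity of $m$ and, for odd $m$, a separate evaluation of the auxiliary sum $F$ over pairs with $\mu-\lambda$ a nonzero square, plus a symmetry argument for the vanishing when $p\equiv3\bmod4$. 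You instead exploit the \emph{linearity} of $\mu\mapsto x_\mu$ to collapse $\sum_\mu\sum_{y\in A_\mu}$ into one free sum over $A$: writing $y=\alpha u_1+\beta u_2+z$ and substituting $\mu=2\alpha\beta+(z,z)$ does give the exponent $\beta(1+\alpha^2)+\alpha(z,z)/2$, the geometric sum over $\beta$ vanishes unless $\alpha^2=-1$ --- which settles $p\equiv3\bmod4$ in one line --- and what survives is $p\left[G(B(\delta/2))+G(B(-\delta/2))\right]$. I checked the endgame: $G(B(\lambda))=\left(\frac{\lambda^{m-2}d(B)}{p}\right)p^{(m-2)/2}$ for $p\equiv1\bmod4$, and with $d(B)=-d(A)$, $\left(\frac{-1}{p}\right)=1$, and $\left(\frac{\delta/2}{p}\right)=\left(\frac{2\delta}{p}\right)$ (as $4$ is a square), everything assembles to $2\left(\frac{2\delta}{p}\right)^m\left(\frac{d(A)}{p}\right)\sqrt{|A|}$; the boundary case $m=2$, $B=0$ correctly returns $2p$ in agreement with Lemma \ref{claim:Gauss U}. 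Your approach buys uniformity in $m$ and independence from the counting formulas; the paper's approach buys intermediate objects ($|B_\alpha|$, $\langle\mu,\lambda\rangle_U$, $F$) that it reuses verbatim in \S\ref{sec:2nd kind} --- though your trick adapts there too, since the extra factor $e(-q(x))$ contributes $-\mu/2$ to the exponent and the same collapse leads to the condition $\alpha^2-\alpha+1=0$, matching the $p\equiv1\bmod3$ dichotomy of Proposition \ref{p-elementary 2nd kind}.
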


\begin{proof}
We keep the notation in the proof of Lemma \ref{claim:Gauss U}. 
We choose a splitting $A=U\oplus B$, which gives the partition 
\begin{equation*}\label{eqn:partition U-split}
A_{\mu} = \bigsqcup_{\lambda\in{\Fp}} U_{\lambda}\times B_{\mu-\lambda}. 
\end{equation*}
As a reference vector in $A_{\mu}$ we use $(x_{\mu}, 0)\in U_{\mu}\times B_0$.  
Then 
\begin{equation}\label{eqn:Gauss sum U-split 1}
{\GAOA} = \sum_{\mu, \lambda \in {\Fp}} |B_{\mu-\lambda}| \cdot \langle \mu, \lambda \rangle_U 
\end{equation}
by this partition. 
The formula of $|B_{\alpha}|$ can be found in \cite{Ge} \S 2.8. 
It depends on the parity of $m={\dim}(A)$. 

(1) Let $m=2k$ be even. 
We write $\delta_A=\left( \frac{(-1)^kd(A)}{p}\right)$. 
Then $\delta_A=1$ if and only if $B\simeq U^{k-1}$. 
By \cite{Ge} Theorem 2.59 we have 
\begin{equation*}\label{eqn:Gauss sum U-split 3}
|B_{\alpha}| = 
\begin{cases}
p^{k-2}(p^{k-1}+\delta_{A}p-\delta_{A}) & \alpha=0, \\ 
p^{k-2}(p^{k-1}-\delta_A) & \alpha\ne0. 
\end{cases}
\end{equation*}
In particular, $|B_{\alpha}|$ for $\alpha\ne0$ is independent of $\alpha$ and hence 
\begin{equation}\label{eqn:Gauss sum U-split 3.5}
{\GAOA} = |B_0| \cdot G(U, {\rm O}(U))  + 
|B_1| \cdot \sum_{\stackrel{\mu, \lambda \in {\Fp}}{\mu\ne\lambda}}\langle \mu, \lambda \rangle_U. 
\end{equation}
Since 
\begin{equation}\label{eqn:sum over U =0}
\sum_{\mu, \lambda \in {\Fp}}\langle\mu, \lambda \rangle_U = \sum_{\mu\in{\Fp}} \sum_{x\in U} \zeta_p^{(x_{\mu}, x)} = 0, 
\end{equation}
we have 
\begin{equation*}
{\GAOA} = (|B_0|-|B_1|) \cdot G(U, {\rm O}(U)) 
= p^{k-1} \cdot \delta_A \cdot G(U, {\rm O}(U)), 
\end{equation*}
and the proposition follows from Lemma \ref{claim:Gauss U}.

(2) Let $m=2k+1$ be odd. 
We put $\tilde{d}(A)=(-1)^{k}d(A)$. 
Then $B\simeq U^{k-1}\oplus \langle \tilde{d}(A) \rangle$. 
By \cite{Ge} Theorem 2.60 we have for $\alpha\in{\Fp}$ 
\begin{equation*}
|B_{\alpha}| = p^{2k-2} + \left( \frac{\alpha\cdot\tilde{d}(A)}{p} \right) p^{k-1}. 
\end{equation*}
This depends on the class of $\alpha$ in ${\Fp}/({\Fpu})^2=\{ \bar{0}, \bar{1}, \bar{\varepsilon} \}$ where $\varepsilon\in{\Fpu}$ is a nonsquare. 
If we write 
\begin{equation*}
S = \{ (\mu, \lambda)\in{\Fp}\times{\Fp} \; | \; \mu-\lambda \in({\Fpu})^2 \}, 
\end{equation*}
\begin{equation*}
F = \sum_{(\mu, \lambda)\in S} \langle \mu, \lambda \rangle_U, 
\end{equation*}
we obtain from \eqref{eqn:Gauss sum U-split 1} and \eqref{eqn:sum over U =0}
\begin{eqnarray*}
{\GAOA} 
& = & 
|B_0|\cdot G(U, {\rm O}(U)) + |B_1|\cdot F + |B_{\varepsilon}|\cdot (-G(U, {\rm O}(U))-F) \\ 
& = & 
(|B_0|-|B_{\varepsilon}|)\cdot G(U, {\rm O}(U))  + (|B_1|-|B_{\varepsilon}|)\cdot F \\ 
& = & 
\left( \frac{\tilde{d}(A)}{p} \right) p^{k-1} \cdot G(U, {\rm O}(U))  + 2 \left( \frac{\tilde{d}(A)}{p} \right) p^{k-1} \cdot F. 
\end{eqnarray*}

We shall show that 
\begin{equation*}
F = 
\begin{cases}
0, & p \equiv 3 \; (4), \\
\left( \left( \frac{2\delta}{p} \right) \sqrt{p} - 1 \right) \cdot p, & p \equiv 1 \; (4), 
\end{cases}
\end{equation*}
from which the proposition follows. 
In case $p \equiv 3 \; (4)$, since $-1$ is nonsquare, switching $\mu$ and $\lambda$ gives 
\begin{equation*}
0 = \sum_{\mu, \lambda \in{\Fp}} \langle \mu, \lambda \rangle_U = 
G(U, {\rm O}(U))  + \sum_{(\mu, \lambda)\in S} \langle \mu, \lambda \rangle_U + \sum_{(\lambda, \mu)\in S} \langle \mu, \lambda \rangle_U = 2F. 
\end{equation*}
In case $p\equiv1\; (4)$, writing $\mu=\lambda+\alpha^2$ for $(\mu, \lambda)\in S$, we have 
\begin{eqnarray*}
2F  
& = & 
\sum_{\lambda\in{\Fp}} \sum_{\alpha\in{\Fpu}} \langle \lambda+\alpha^2, \lambda \rangle_U \\ 
& = & 
\sum_{\lambda\in{\Fp}} \sum_{\nu\in{\Fpu}} \zeta_p^{2^{-1}(\lambda\nu+\lambda\nu^{-1})} \sum_{\alpha\in{\Fpu}} \zeta_p^{2^{-1}\nu\alpha^2} \\ 
& = & 
\sum_{\nu\in{\Fpu}}  (G(A_{p, 2\nu})-1) \sum_{\lambda\in{\Fp}} \zeta_p^{2^{-1}\lambda(\nu+\nu^{-1})}. 
\end{eqnarray*}
The square roots $\pm\delta$ of $-1$ are the solutions of $\nu+\nu^{-1}=0$. 
We thus obtain 
\begin{equation*}
2F = 
2p (G(A_{p, 2\delta}) - 1) =  
2p  \left( \left( \frac{2\delta}{p} \right) \sqrt{p} - 1 \right). 
\end{equation*}
\end{proof}

It remains to consider the anisotropic case.  
Since the $1$-dimensional case is covered in \S \ref{ssec:cyclic p>2}, 
what remains is the anisotropic plane. 

\begin{proposition}\label{prop:Gauss anisotropic plane}
Let $A$ be the anisotropic plane over ${\Fp}$ with $p>2$. 
Then ${\GAOA} = (-1)^{(p+1)/2}p$. 
\end{proposition}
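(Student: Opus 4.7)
The plan is to realize the anisotropic plane concretely as $\mathbb{F}_{p^2}$ equipped with the (scaled) norm form, and to parametrize each nontrivial ${\OA}$-orbit by the norm-one subgroup $\ker N \subset \mathbb{F}_{p^2}^{\times}$. This reduces ${\GAOA}$ to a simple character sum over $\ker N$ that can be evaluated by counting fourth roots of unity.

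First I would identify $A$ with $\mathbb{F}_{p^2}$ carrying the quadratic form $q(x) = N(x)/(2p) \bmod {\Z}$, where $N$ and $T$ denote the norm and trace from $\mathbb{F}_{p^2}$ to ${\Fp}$. The bilinear form then reads $(x, y) \equiv T(xy^p)/(2p) \pmod{{\Z}}$, and by Witt's theorem the orbit decomposition is $A = \{0\} \sqcup \bigsqcup_{\mu \in {\Fpu}} A_\mu$ with $A_\mu = N^{-1}(\mu)$ of size $p+1$. The trivial orbit contributes $\langle 0, 0 \rangle_A = 1$.

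Next I would fix a reference $x_\mu \in A_\mu$; since multiplication by $x_\mu$ is a bijection $\ker N \to A_\mu$, every $y \in A_\mu$ has the form $y = x_\mu \zeta$ with $\zeta \in \ker N$. Using the Galois-invariance $T(\zeta^p) = T(\zeta)$ together with $x_\mu \cdot x_\mu^p = \mu$, one computes
\begin{equation*}
(x_\mu, x_\mu \zeta) \equiv \frac{\mu\, T(\zeta)}{2p} \pmod{{\Z}},
\end{equation*}
hence $\langle \mu, \mu \rangle_A = \sum_{\zeta \in \ker N} \psi(2^{-1} \mu T(\zeta))$, where $\psi(t) = e(t/p)$. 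Summing over $\mu \in {\Fpu}$ and exchanging the order of summation, the inner sum $\sum_{\mu \in {\Fpu}} \psi(2^{-1} \mu T(\zeta))$ equals $p-1$ if $T(\zeta) = 0$ and $-1$ otherwise, so
\begin{equation*}
\sum_{\mu \in {\Fpu}} \langle \mu, \mu \rangle_A = p(k - 1) - 1,
\end{equation*}
where $k = |\{\zeta \in \ker N : T(\zeta) = 0\}|$.

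Finally I would evaluate $k$. For $\zeta \in \ker N$ we have $\zeta^p = \zeta^{-1}$, so $T(\zeta) = \zeta + \zeta^{-1}$ and $T(\zeta) = 0 \iff \zeta^2 = -1$. A primitive fourth root of unity in $\mathbb{F}_{p^2}^{\times}$ lies in $\ker N$ iff $4 \mid p+1$, i.e.\ $p \equiv 3 \pmod 4$; thus $k = 2$ in that case and $k = 0$ if $p \equiv 1 \pmod 4$. Adding $\langle 0, 0 \rangle_A = 1$ cancels the $-1$ and yields ${\GAOA} = p(k-1)$, which is $+p$ when $p \equiv 3 \pmod 4$ and $-p$ when $p \equiv 1 \pmod 4$ --- in both cases equal to $(-1)^{(p+1)/2} p$. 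The only slightly nontrivial step is the initial identification of the anisotropic plane with the norm form on $\mathbb{F}_{p^2}$; thereafter the proof reduces to elementary character theory on ${\Fp}$ and counting fourth roots of unity in $\ker N$.
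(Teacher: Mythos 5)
Your proof is correct, and it takes a genuinely different route from the paper. You work in the explicit model $A\simeq\mathbb{F}_{p^2}$ with the scaled norm form, parametrize each orbit $A_\mu=N^{-1}(\mu)$ by the norm-one group $\ker N\cong\mu_{p+1}$ (which acts by isometries, so together with norm-preservation this already gives the orbit decomposition without the full strength of Witt extension), and reduce ${\GAOA}$ to counting $\zeta\in\ker N$ with $T(\zeta)=\zeta+\zeta^{-1}=0$, i.e.\ primitive fourth roots of unity in $\mu_{p+1}$; all steps check out, including the character-sum evaluation $p(k-1)-1$ and the criterion $k=2$ iff $4\mid p+1$. The paper instead argues intrinsically: it uses the scaling isometry $A(\varepsilon)\simeq A$ to build a double cover $\tilde{A}$ of $A\setminus\{0\}$ with an $\Fpu$-equivariant map $\varphi:\tilde{A}\to\Fp$, proves the fibers over $\Fpu$ have constant cardinality (Claim \ref{claim:constant fiber}), and reduces everything to $|\varphi^{-1}(0)|$, counted via perpendicular vectors of equal norm and the condition $\left(\frac{-1}{p}\right)=-1$. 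Your argument is shorter and makes visible a pleasant symmetry with the paper's Lemma \ref{claim:Gauss U}: there the hyperbolic plane reduces to solving $\nu+\nu^{-1}=0$ in the split torus $\Fpu$ (possible iff $4\mid p-1$), while your anisotropic case is the exact non-split analogue, solving $\zeta+\zeta^{-1}=0$ in the non-split torus $\mu_{p+1}$ (possible iff $4\mid p+1$), which explains the sign flip. What the paper's heavier machinery buys is model-independence and generality: as remarked after Proposition \ref{prop:Gauss anisotropic plane}, the double-cover method extends to arbitrary even-dimensional $p$-elementary forms, whereas your identification with the norm form is specific to the plane. One small point of care: you should say explicitly that the anisotropic plane is unique up to isometry among $\QZ$-valued forms on $({\Z}/p)^2$ (dimension and discriminant classify, and $A(\varepsilon)\simeq A$ absorbs scaling), so that computing with $q(x)=2^{-1}N(x)/p$ loses no generality; you flag this, and it is indeed unproblematic.
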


\begin{proof}
Fix a nonsquare $\varepsilon\in{\Fpu}$. 
Since the scaling $A(\varepsilon)$ is isometric to $A$ itself, 
there exists an isomorphism $j:A\to A$ of abelian groups such that $(j(x), j(y))=\varepsilon(x, y)$ for every $x, y\in A$. 
For $\lambda\in{\Fpu}$ we write 
\begin{equation*}
A_{\lambda}^{+}=\{ (\lambda, x) | x\in A_{\lambda^2} \} \simeq A_{\lambda^2}, \qquad 
A_{\lambda}^{-}=\{ (\lambda, x) | x\in A_{\varepsilon\lambda^2} \} \simeq A_{\varepsilon\lambda^2}, 
\end{equation*} 
and set 
\begin{equation*}
A^{\pm} = \bigsqcup_{\lambda\in{\Fpu}} A_{\lambda}^{\pm}, \qquad 
\tilde{A} = A^+ \sqcup A^-. 
\end{equation*}
$\tilde{A}$ is a double covering of $A\backslash\{0\}$, 
with the covering transformation $(\lambda, x)\mapsto(-\lambda, x)$ which switches $A_{\lambda}^{\pm}$ and $A_{-\lambda}^{\pm}$. 
On $\tilde{A}$ we have an ${\Fpu}$-action defined by $\alpha\cdot(\lambda, x)=(\alpha\lambda, \alpha x)$. 
We can choose a reference point $(\lambda, x_{\lambda}^{\pm})\in A_{\lambda}^{\pm}$ for each $\lambda$ so that 
$x_{\alpha\lambda}^{\pm}=\alpha x_{\lambda}^{\pm}$ and $x_{\lambda}^{-}=j(x_{\lambda}^{+})$. 
If we consider the mapping 
\begin{equation*}
\varphi : \tilde{A} \to {\Fp}, \qquad A_{\lambda}^{\pm}\ni (\lambda, x)\mapsto (x_{\lambda}^{\pm}, x), 
\end{equation*}
we have 
\begin{equation*}
2{\GAOA} = 2 + \sum_{(\lambda, x)\in\tilde{A}}\zeta_p^{\varphi(\lambda, x)}.
\end{equation*}

\begin{claim}\label{claim:constant fiber}
The fibers of $\varphi$ over ${\Fpu}\subset{\Fp}$ have constant cardinality. 
\end{claim}

\begin{proof}
We let ${\Fpu}$ act on ${\Fp}$ by weight $2$, i.e., $\alpha(t)=\alpha^2 t$. 
By our choice of $x_{\lambda}^{\pm}$, the map $\varphi$ is ${\Fpu}$-equivariant. 
Hence its fibers have constant cardinality over $({\Fpu})^2$ and over $\varepsilon({\Fpu})^2$ respectively. 
We shall show that 
\begin{equation}\label{eqn:equality of fiber degree}
|\varphi^{-1}(({\Fpu})^2)\cap A^{\pm}| = |\varphi^{-1}(\varepsilon({\Fpu})^2)\cap A^{\mp}|, 
\end{equation}
which would then imply $|\varphi^{-1}(({\Fpu})^2)| = |\varphi^{-1}(\varepsilon({\Fpu})^2)|$. 
We consider the map $j:A^+\to A^-$ defined by $(\lambda, x)\to(\lambda, j(x))$. 
Since $x_{\lambda}^{-}=j(x_{\lambda}^{+})$, 
we have $\varphi(j(\lambda, x))=\varepsilon\cdot\varphi(\lambda, x)$ for $(\lambda, x)\in A^+$. 
Hence 
\begin{equation*}
j(\varphi^{-1}(({\Fpu})^2)\cap A^{+}) \subset \varphi^{-1}(\varepsilon({\Fpu})^2)\cap A^{-},  
\end{equation*}
\begin{equation*}
j(\varphi^{-1}(\varepsilon({\Fpu})^2)\cap A^{+}) \subset  \varphi^{-1}(({\Fpu})^2)\cap A^{-},  
\end{equation*}
\begin{equation*}
j(\varphi^{-1}(0)\cap A^{+}) \subset  \varphi^{-1}(0)\cap A^{-}. 
\end{equation*}
Since $j:A^+\to A^-$ is bijective, the three inclusions are all equality.  
\end{proof}
 
By this claim we have 
\begin{eqnarray*}
2{\GAOA} 
& = & 
2 + |\varphi^{-1}(0)| + |\varphi^{-1}(1)|\sum_{\alpha\in{\Fpu}}\zeta_p^{\alpha} \\ 
& = & 
2 + |\varphi^{-1}(0)| - |\varphi^{-1}(1)| \\ 
& = & 
-2p + \frac{p}{p-1}|\varphi^{-1}(0)|. 
\end{eqnarray*}
In the last equality we used 
\begin{equation*} 
(p-1)|\varphi^{-1}(1)|+|\varphi^{-1}(0)|=|\tilde{A}|=2(p^2-1). 
\end{equation*}
We are thus reduced to calculating $|\varphi^{-1}(0)|$. 

For each $\lambda\ne0$, 
$\varphi^{-1}(0)\cap A_{\lambda}^{\pm}$ is identified with 
the set of vectors in $(x_{\lambda}^{\pm})^{\perp}\cap A$ having the same norm as $x_{\lambda}^{\pm}$. 
This set is non-empty if and only if $d(A)=-\varepsilon$ is a square, i.e., $\left( \frac{-1}{p} \right)=-1$. 
In that case it consists of two elements. 
It follows that  
\begin{equation*}
|\varphi^{-1}(0)| = \left( 1 - \left( \frac{-1}{p} \right) \right) \cdot 2(p-1). 
\end{equation*}
Therefore ${\GAOA}=-\left( \frac{-1}{p} \right) p$. 
\end{proof}

The above method can be extended to general $p$-elementary forms of even dimension. 
The result agrees with Proposition \ref{prop:Gauss U-split}, of course.

\subsection{Product formula}\label{ssec: product formula p>2}

Let $p>2$. 
Let $A$ be the direct sum $A=A_{p^k, a}\oplus B$ where $k>1$ and $B$ is $p$-elementary. 
We show that a product formula holds for ${\GAOA}$. 
This is not trivial as ${\OA}$ does not preserve the decomposition in general. 
We first describe the orthogonal group ${\OA}$, then classify the ${\OA}$-orbits, 
and finally calculate the Gauss sum. 

Let $e$ be the standard generator of $A_{p^k, a}\simeq {\Z}/p^k$. 
Using $e$, we can express an isomorphism $A\to A$ of abelian groups in the matrix form 
\begin{equation}\label{eqn:matrix form of A->A}
\begin{pmatrix} x & p^{k-1}f \\ v & g \end{pmatrix} 
\; \; \in \; \;  
\begin{pmatrix} ({\Z}/p^k)^{\times} & p^{k-1}{\hom}(B, {\Z}/p) \\ B & {\hom}(B, B) \end{pmatrix}. 
\end{equation} 
We define a subgroup of ${\OA}$ isomorphic to $B\rtimes {\OB}$ as follows. 
For $(v, g)\in B\rtimes {\OB}$ we define $x_v\in({\Z}/p^k)^{\times}$ and $f_{v,g}:B\to{\Z}/p$ by 
\begin{equation*}
x_v = 1-2^{-1}a^{-1}(p(v, v))p^{k-1}, 
\end{equation*}
\begin{equation*}
f_{v,g}(?) = -a^{-1}p(g^{-1}(v), ?), \quad ?\in B, 
\end{equation*}
where we view 
$p(v, v), p(g^{-1}(v), ?) \in {\Z}/p$. 
We define $\gamma_{v,g}:A\to A$ by 
\begin{equation*}\label{eqn:gamma(g,v)}
 \gamma_{v,g} = \begin{pmatrix} x_v & p^{k-1}f_{v,g} \\ v & g \end{pmatrix}. 
\end{equation*}
It is easy to check that $\gamma_{v,g}$ is an isometry and that the group  
\begin{equation*}\label{eeqn:O(B)ltimesB}
\Gamma = \{ \gamma_{v,g} \; | \; (v, g)\in B\rtimes {\OB} \}
\end{equation*}
is isomorphic to the semiproduct $B\rtimes {\OB}$. 

\begin{proposition}\label{prop:structure of OA}
We have ${\OA}=\{ \pm 1\} \times \Gamma$. 
\end{proposition}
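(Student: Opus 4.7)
The plan is to take an arbitrary isometry $\gamma\in{\OA}$, put it in the matrix shape \eqref{eqn:matrix form of A->A}, and extract $v,g,x,f$ from the three scalar identities obtained by applying $\gamma$-invariance to the basic pairings $(e,e)$, $(e,b)$, and $(b_{1},b_{2})$. The shape \eqref{eqn:matrix form of A->A} is itself forced: the top-left entry $x$ lies in $({\Z}/p^{k})^{\times}$ because $\gamma(e)$ must have order $p^{k}$ (this is where $k>1$ enters, as the elements of order $p^{k}$ in $A$ are exactly those whose ${\Z}e$-component is a unit), while the top-right block factors through $p^{k-1}({\Z}/p^{k})\simeq{\Z}/p$ because $B$ is $p$-torsion, so it can be written as $p^{k-1}f$ for some $f\in\hom(B,{\Z}/p)$.

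Next, I would unfold the three isometry relations. The equation $(\gamma(e),\gamma(e))=(e,e)$ reduces to the congruence
\begin{equation*}
x^{2} \; \equiv \; 1 - a^{-1}\,p(v,v)\,p^{k-1} \pmod{p^{k}},
\end{equation*}
which, since $p>2$, has by Hensel's lemma exactly two solutions modulo $p^{k}$ differing by a sign, the one congruent to $1$ modulo $p$ being precisely $x_{v}$. After possibly replacing $\gamma$ by $-\gamma$ I may assume $x=x_{v}$. The relation $(\gamma(e),\gamma(b))=0$, combined with $x\equiv1\pmod{p}$, then gives $af(b)+p(v,g(b))\equiv0\pmod{p}$, which forces $f=f_{v,g}$ once $g\in{\OB}$ is known. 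Finally, expanding $(\gamma(b_{1}),\gamma(b_{2}))=(b_{1},b_{2})$, the ${\Z}e$-contribution is the integer $p^{k-2}f(b_{1})f(b_{2})a$, which vanishes in ${\QZ}$ because $k\geq2$; what remains is $(g(b_{1}),g(b_{2}))=(b_{1},b_{2})$, so $g$ preserves the bilinear form on $B$. Combined with nondegeneracy of $B$ (which makes $g$ injective, hence bijective on the finite group $B$), this yields $g\in{\OB}$.

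At this point every isometry has been exhibited as $\pm\gamma_{v,g}$ for a uniquely determined $(v,g)\in B\rtimes{\OB}$; the converse inclusion $\Gamma\subset{\OA}$ is the routine verification already indicated in the excerpt. For the direct product statement one observes that every element of $\Gamma$ has leading entry $x_{v}\equiv1\pmod{p}$, whereas $-{\rm id}$ has leading entry $-1\not\equiv1\pmod{p}$ (again using $p>2$), so $\{\pm1\}\cap\Gamma=\{{\rm id}\}$, and the assertion ${\OA}=\{\pm1\}\times\Gamma$ follows.

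The main obstacle is the first of the three calculations: one must solve the quadratic congruence $x^{2}\equiv 1-a^{-1}p(v,v)p^{k-1}\pmod{p^{k}}$ sharply modulo $p^{k}$ rather than merely modulo $p$, and identify its distinguished root with the expression $x_{v}$ from before the statement. The computation rests on the expansion $(1+pz)^{2}\equiv1+2pz\pmod{p^{k}}$, valid for $k\geq2$ (the $p^{2}z^{2}$ tail being absorbed), which forces $z=-2^{-1}a^{-1}p(v,v)p^{k-2}$; this is where the factor $2^{-1}$ in $x_{v}$ appears and where the hypothesis $p>2$ is essential. The remaining bookkeeping, and the verification that $\gamma_{v,g}$ is indeed an isometry, is straightforward in comparison.
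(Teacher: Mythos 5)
Your argument is correct and is essentially the paper's own proof written out in full: you derive the same three isometry conditions \eqref{eqn:isometry condition 1}--\eqref{eqn:isometry condition 2} together with $g\in{\OB}$, solve the quadratic congruence to get $x=\pm x_v$, and note that $f$ is then uniquely determined, exactly as the paper does, merely supplying the details it leaves implicit (the forced matrix shape \eqref{eqn:matrix form of A->A}, the Hensel-type identification of the distinguished root with $x_v$, and the vanishing of the $p^{k-2}af(b_1)f(b_2)$ contribution for $k\geq2$). One cosmetic remark: the congruence $(1+pz)^2\equiv 1+2pz \pmod{p^k}$ fails for arbitrary $z$ when $k\geq3$, but since your $z$ is divisible by $p^{k-2}$ the tail $p^2z^2$ is indeed absorbed, so the step is sound as used.
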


\begin{proof}
The isometry condition for the matrix \eqref{eqn:matrix form of A->A} is 
\begin{equation}\label{eqn:isometry condition 1}
ax^2 + p^{k-1}(p(v, v)) \equiv a \mod p^k, 
\end{equation}
\begin{equation}\label{eqn:isometry condition 2}
a x f(?) + p(v, g(?)) = 0 \in {\hom}(B, {\Z}/p), \quad ?\in B, 
\end{equation}
\begin{equation*}\label{eqn:isometry condition 3}
g\in {\OB}. 
\end{equation*}
The solutions of \eqref{eqn:isometry condition 1} are explicitly given by $x=\pm x_v$. 
Then $f$ is uniquely determined from $v, x$ and $g$ by \eqref{eqn:isometry condition 2}.  
\end{proof}


We consider the following subsets of $A$:  
\begin{equation*}
A_0 = (({\Z}/p^k)^{\times}e) \times B, \qquad 
A_1 = (p({\Z}/p^{k-1})e) \times B. 
\end{equation*}
We have $A = A_0 \cup A_1$. 
Each $A_0$, $A_1$ is preserved by ${\OA}$.

\begin{lemma}\label{G-action on A_i p>2}
(1) The subset $(({\Z}/p^k)^{\times}e) \times \{ 0 \}$ of $A_0$ is a representative for $\Gamma\backslash A_0$. 
For $x\in({\Z}/p^k)^{\times}$ we have 
\begin{equation}\label{eqn:quasi-cyclic G-orbit}
\Gamma(xe) = \{ x(x_ve+v) \; | \; v\in B \}. 
\end{equation}
(2) The $\Gamma$-orbit of $xe+w\in A_1$, where $x\in p({\Z}/p^{k-1})$ and $w\in B$, is 
\begin{equation*}
\Gamma(xe+w) = 
\begin{cases}
\{ xe \}, & w=0, \\ 
(x + p^{k-1}({\Z}/p))e \times ({\OB}w), & w\ne0.  
\end{cases}
\end{equation*}
\end{lemma}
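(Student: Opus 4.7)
The plan is to unpack the definition of $\gamma_{v,g}$ and compute its action on an arbitrary element $ye + b$ with $y \in {\Z}/p^k$ and $b \in B$, then specialize to the two cases separately. From the matrix form (\ref{eqn:matrix form of A->A}) one reads off
$$\gamma_{v,g}(ye + b) = (y\, x_v + p^{k-1} f_{v,g}(b))\, e + (y v + g(b)).$$
Both parts of the lemma will follow from this formula together with the explicit shapes of $x_v$ and $f_{v,g}$.

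For part (1), I would first put $b = 0$, obtaining $\gamma_{v,g}(ye) = y(x_v e + v)$, which is exactly (\ref{eqn:quasi-cyclic G-orbit}). To see that $(({\Z}/p^k)^{\times} e) \times \{0\}$ is a complete set of representatives, I would, given an arbitrary $xe + w \in A_0$, exhibit a unique $y \in ({\Z}/p^k)^{\times}$ and $v \in B$ with $y x_v = x$ and $yv = w$. Since $x_v \equiv 1 \pmod{p^{k-1}}$ and $B$ is $p$-torsion, the second equation reduces to $xv = w$ in $B$, which has the unique solution $v = x^{-1} w$ (the inverse of $x$ taken mod $p$); then $y = x/x_v$ is forced, and is a unit since $x_v$ is. This gives existence and uniqueness of the representative.

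For part (2), the key simplification is that $x \in p({\Z}/p^{k-1})$ kills the $p^{k-1}$-torsion tail of $x_v$ (so $x\,x_v = x$) and annihilates $v \in B$ (so $xv = 0$), so the action collapses to
$$\gamma_{v,g}(xe + w) = (x + p^{k-1} f_{v,g}(w))\, e + g(w).$$
When $w = 0$ both correction terms vanish, giving the singleton orbit $\{xe\}$. When $w \neq 0$, the $B$-component $g(w)$ sweeps out ${\rm O}(B) w$ as $g$ ranges over ${\rm O}(B)$, while for each fixed $g$ the scalar $f_{v,g}(w) = -a^{-1} p(g^{-1}(v), w)$ sweeps out all of ${\Z}/p$ as $v$ runs over $B$.

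The only nontrivial step is the last claim, which is the main obstacle I anticipate. It reduces, after the substitution $v' = g^{-1}(v)$, to the surjectivity of the linear functional $B \to {\Z}/p$, $v' \mapsto p(v', w)$. This surjectivity is precisely the nondegeneracy of the bilinear form on the ${\Fp}$-space $B$ applied to the nonzero vector $w$. Once this is in hand, the fact that $g$ and $v$ can be chosen independently (for each $g$, the full fiber of values of $f_{v,g}(w)$ is attained) yields the product description $(x + p^{k-1}({\Z}/p))e \times {\rm O}(B)w$ of the orbit.
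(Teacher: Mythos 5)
Your proposal is correct and takes essentially the same route as the paper's proof: both rest on unpacking the matrix action $\gamma_{v,g}(ye+b)=(yx_v+p^{k-1}f_{v,g}(b))e+(yv+g(b))$, the facts that $x_v\equiv 1 \bmod p^{k-1}$ and that $B$ is $p$-torsion (so $xx_v=x$ and $xv=0$ on $A_1$), and the nondegeneracy of the form on $B$ to see that $f_{v,g}(w)$ sweeps out all of ${\Z}/p$ when $w\ne0$. The only cosmetic difference is in part (1), where the paper establishes completeness of the representatives by a counting argument ($|\Gamma(xe)|=|B|$ with the orbits pairwise disjoint, hence covering $A_0$), while you solve $y(x_ve+v)=xe+w$ directly and uniquely for $(y,v)$; both are immediate consequences of the same orbit description.
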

 
\begin{proof}
(1) Let $x\in({\Z}/p^k)^{\times}$. 
The description \eqref{eqn:quasi-cyclic G-orbit} of the orbit is apparent. 
It implies that $x'e\not\in \Gamma(xe)$ if $x\ne x' \in ({\Z}/p^k)^{\times}$. 
Since $|\Gamma(xe)|=|B|$, then $\Gamma(({\Z}/p^k)^{\times}e)=A_0$. 
The assertion (2) follows from 
\begin{equation*}
B(xe+w')=xe+B(w')=xe+w'+p^k(B, w')e
\end{equation*} 
for $x\in p({\Z}/p^{k-1})$ and $w'\in{\OB}w\subset B$. 
\end{proof}

Before calculating ${\GAOA}$, let us prepare a general formula. 

\begin{lemma}\label{redundant Gauss sum =0}
Let $p>2$, $a\in{\Zpu}$ and suppose that $k\geq2$. 
Then 
\begin{equation*}\label{eqn:a vanishing of exponential sum}
\sum_{x\in({\Z}/p^k)^{\times}} \zeta_{p^k}^{ax^2} = 0. 
\end{equation*}
\end{lemma}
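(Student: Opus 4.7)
My plan is to exploit the elementary shift $x \mapsto x + p^{k-1}c$ for $c \in \Z/p$, which preserves the unit group $(\Z/p^k)^{\times}$ (it does not change $x$ mod $p$) and expands the exponent $ax^2$ in a controllable way.

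The key observation is that when $k \geq 2$ we have $2k-2 \geq k$, so
\begin{equation*}
a(x+p^{k-1}c)^2 \equiv ax^2 + 2axc \cdot p^{k-1} \pmod{p^k},
\end{equation*}
and therefore
\begin{equation*}
\zeta_{p^k}^{a(x+p^{k-1}c)^2} = \zeta_{p^k}^{ax^2} \cdot \zeta_{p}^{2axc}.
\end{equation*}
Since $c \mapsto x + p^{k-1}c$ is a bijection from $\Z/p$ onto the fiber of $(\Z/p^k)^{\times} \to (\Z/p^{k-1})^{\times}$ above the class of $x$, summing the identity above over $x \in (\Z/p^k)^{\times}$ and $c \in \Z/p$ yields
\begin{equation*}
p \sum_{x\in(\Z/p^k)^{\times}} \zeta_{p^k}^{ax^2} = \sum_{x\in(\Z/p^k)^{\times}} \zeta_{p^k}^{ax^2}\sum_{c\in\Z/p} \zeta_{p}^{2axc}.
\end{equation*}
Now the inner sum over $c$ equals $p$ if $2ax \equiv 0 \pmod p$ and $0$ otherwise; but $p>2$, $a\in\Zpu$, and $x\in(\Z/p^k)^{\times}$ force $2ax\not\equiv 0\pmod{p}$, so every inner sum vanishes. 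Dividing by $p$ yields the claim.

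This is essentially the whole argument; the only step that requires checking is the congruence $(x+p^{k-1}c)^2\equiv x^2+2xp^{k-1}c \pmod{p^k}$, which uses $k\geq 2$, and the verification that the shift preserves the unit condition. There is no real obstacle — the argument is cleaner than splitting the Gauss sum $G(A_{p^k,a})$ into unit and non-unit parts and invoking the known evaluation.
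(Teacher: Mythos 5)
Your proof is correct. The congruence $a(x+p^{k-1}c)^2\equiv ax^2+2axcp^{k-1}\pmod{p^k}$ uses exactly the hypothesis $k\geq2$, the shift visibly preserves $({\Z}/p^k)^{\times}$, the exponent $2axc$ in $\zeta_p^{2axc}$ depends only on $x$ mod $p$, and the inner sum over $c$ vanishes precisely because $p>2$, $a\in{\Zpu}$ and $x$ a unit force $2ax\not\equiv0\pmod p$; the double-counting step (each unit is hit $p$ times) is also right, so all hypotheses are accounted for.

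Your route is genuinely different from the paper's. The paper substitutes $y=x^2$, rewriting the sum as $2\sum_{y\in S}\zeta_{p^k}^{ay}$ with $S=(({\Z}/p^k)^{\times})^2$ (squaring is two-to-one for odd $p$), then invokes the local square theorem to see that $S$ is a union of additive cosets $y_i+p({\Z}/p^{k-1})$, and kills the sum over each coset by the shift $y\mapsto y+p$, which multiplies it by $\zeta_{p^{k-1}}^{a}\neq1$. Both proofs average over a subgroup shift against a nontrivial character, but at opposite ends of the filtration: the paper shifts the \emph{values} $y=x^2$ by $p$, you shift the \emph{variable} $x$ by $p^{k-1}$. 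Your version is more self-contained (no local square theorem, no two-to-one counting) and proves the slightly stronger local statement that the partial sum over each fiber of $({\Z}/p^k)^{\times}\to({\Z}/p^{k-1})^{\times}$ already vanishes. One structural remark: the surviving cross term is $2axcp^{k-1}$, so the invertibility of $2$ is what makes your character nontrivial. For $p=2$ the same shift changes $ax^2$ by $2^kaxc$, a trivial character, so your argument genuinely breaks at the prime $2$ --- consistent with the failure of the statement for small $k$ there --- whereas the paper's coset argument transfers directly via $({\Z}_2^{\times})^2=1+8{\Z}_2$ to give the $p=2$, $k\geq4$ version reused in \S\ref{ssec: product formula p=2}; adapting your computation there would require the refined exponent modulo $2^{k+1}$ and a shift by a smaller power of $2$.
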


\begin{proof}
If we write $S=(({\Z}/p^k)^{\times})^2\subset{\Z}/p^k$, 
the right hand side is written as $2\sum_{y\in S}\zeta_{p^{k}}^{ay}$. 
By the local square theorem, we have the additive action of $p({\Z}/p^{k-1})$ on $S$. 
On each orbit, say $S_i=y_i+p({\Z}/p^{k-1})$, we have 
\begin{equation*}
\sum_{y\in S_i} \zeta_{p^{k}}^{ay} = 
\sum_{y\in S_i} \zeta_{p^{k}}^{a(y+p)} = 
\zeta_{p^{k-1}}^{a} \cdot \sum_{y\in S_i} \zeta_{p^{k}}^{ay}. 
\end{equation*}
Since $\zeta_{p^{k-1}}^{a}\ne 1$, this sum is equal to $0$. 
\end{proof}

We are now ready to calculate the Gauss sum. 

\begin{proposition}\label{product formula general quasi-cyclic p>2}
Let $A$ be the direct sum $A=C\oplus B$ where $C=A_{p^k, a}$ with $p>2$, $k>1$ and $B$ is $p$-elementary. 
Then 
\begin{equation*}
{\GAOA} = G(C, {\rm O}(C)) \cdot G(B, {\rm O}(B)). 
\end{equation*}
\end{proposition}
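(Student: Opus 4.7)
The plan is to split $A$ into the two $O(A)$-invariant subsets
$A_0 = ((\Z/p^k)^\times e) \times B$ and $A_1 = (p(\Z/p^{k-1})e) \times B$
introduced before Lemma \ref{G-action on A_i p>2}, decompose $G(A, O(A)) = G_{A_0} + G_{A_1}$ according to which piece each orbit lies in, and prove $G_{A_0} = 0$ and $G_{A_1} = G(C, O(C)) \cdot G(B, O(B))$ separately.

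For $G_{A_0}$: Lemma \ref{G-action on A_i p>2} (1) gives the $O(A)$-orbit $\{\pm x(x_v e + v) : v \in B\}$ for each representative $xe$ with $x \in (\Z/p^k)^\times/\{\pm 1\}$. Substituting the formula for $x_v$ and using $p(v, v)/p \equiv (v, v) \bmod \Z$, one checks that $(xe, x(x_v e + v)) \equiv x^2 a/p^k - q(xv) \bmod \Z$. Since multiplication by $x$ is bijective on $B$, summing over $v$ produces the Milgram sum $G'(B)$ on one branch and $\overline{G'(B)}$ on the other, giving
\[
G_{A_0} = \tfrac{1}{2}\overline{G'(B)} \sum_{x \in (\Z/p^k)^\times} e(x^2 a/p^k) + \tfrac{1}{2} G'(B) \sum_{x \in (\Z/p^k)^\times} e(-x^2 a/p^k),
\]
and both inner sums vanish by Lemma \ref{redundant Gauss sum =0}.

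For $G_{A_1}$: Rather than enumerate orbits, I would invoke the orbit-averaging identity
\[
|O(A)| \cdot G_{A_1} = \sum_{\gamma \in O(A)} \sum_{y \in A_1} e((y, \gamma y)),
\]
which follows from $\sum_{\gamma} e((y, \gamma y)) = |\mathrm{Stab}(y)| \sum_{z \in O(A)y} e((y, z))$. Writing $\gamma = \epsilon \gamma_{v, g}$ with $\epsilon \in \{\pm 1\}$, $(v, g) \in B \rtimes O(B)$, and $y = pt'e + b$, the crucial reduction is that $s := pt' \in p\Z$ absorbs enough powers of $p$ to kill, modulo $\Z$, each of the $v$-dependent cross terms: the off-diagonal contribution $s f_{v, g}(b) \cdot a/p$, the pairing $s(b, v)$, and the $x_v$-correction in $x_v a s^2/p^k$ are all integers. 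What remains is
$(y, \gamma y) \equiv \epsilon\,(at'^2/p^{k-2} + (b, g(b))) \bmod \Z$, which is $v$-independent. The inner sum then factorizes; the $t'$-sum equals $G(A_{p^k, a\epsilon})$ via the identity $\sum_{s \in \Z/p^{k-1}} e(a s^2/p^{k-2}) = G(A_{p^k, a})$ (itself a direct application of Lemma \ref{redundant Gauss sum =0}), and the substitution $g \mapsto -g$ identifies the $g$-sum for $\epsilon = -1$ with that for $\epsilon = +1$, each giving $|O(B)| G(B, O(B))$. Combining with $|O(A)| = 2|B||O(B)|$ and $\tfrac{1}{2}(G(C) + \overline{G(C)}) = \Re(G(C)) = G(C, O(C))$ yields the product formula.

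The main technical obstacle is the congruence bookkeeping in step two: verifying cleanly that every $v$-dependent term in $(y, \gamma_{v, g} y)$ becomes integral once $s \in p\Z$, so that the inner sum depends only on $g$. A minor subtlety is that $G(C)$ is purely imaginary when $p^k \equiv 3 \bmod 4$, but this causes no issue because then $G(C, O(C)) = 0$ and $G(B, O(B))$ is real, so $\Re(G(C) \cdot G(B, O(B))) = 0$ and the formula degenerates consistently.
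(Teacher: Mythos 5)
Your proof is correct, and it splits into a half that coincides with the paper's and a half that is genuinely different. For the stratum $A_0$ you argue exactly as the paper does: the orbit description of Lemma \ref{G-action on A_i p>2}(1), the computation $\langle [xe],[xe]\rangle_A = e(ax^2/p^k)\,\overline{G'(B)} + e(-ax^2/p^k)\,G'(B)$ (the paper writes $\overline{G'(B)}$ as $G(B(-2))$, the same quantity since $-2^{-1}$ and $-2$ differ by a square), and the vanishing of $\sum_{x\in(\Z/p^k)^{\times}}\zeta_{p^k}^{\pm ax^2}$ from Lemma \ref{redundant Gauss sum =0}. For $A_1$, however, the paper classifies the ${\OA}$-orbits explicitly (Lemma \ref{G-action on A_i p>2}(2)), which forces the case split $w=0$ versus $w\ne0$ and the factor-of-$p$ bookkeeping of \eqref{eqn: split of pairing at A1 stratum}, and then resums via Lemma \ref{redundant Gauss sum =0}; you instead use the Burnside-type average $|{\OA}|\,G_{A_1}=\sum_{\gamma\in{\OA}}\sum_{y\in A_1}e((y,\gamma y))$, which is legitimate because $A_1$ is ${\OA}$-stable and $\sum_{\gamma}e((y,\gamma y))$ is constant on orbits. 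Your congruence bookkeeping checks out: with $s=pt'$ one has $asf_{v,g}(b)/p=at'f_{v,g}(b)\in\Z$, $s(b,v)=pt'(b,v)\in\Z$, and $a(x_v-1)s^2/p^k\in p\Z$ since $x_v-1\in p^{k-1}\Z/p^k$ and $s^2\in p^2\Z$, leaving the $v$-independent phase $\epsilon\bigl(at'^2/p^{k-2}+(b,g(b))\bigr)$; the identity $\sum_{t'\in\Z/p^{k-1}}e(at'^2/p^{k-2})=G(A_{p^k,a})$ is exactly the standard recursion obtained from Lemma \ref{redundant Gauss sum =0}, as you say (for $k=2$ it degenerates correctly to $p=G(A_{p^2,a})$), and $g\mapsto -g$ handles $\epsilon=-1$. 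Note that your route does not escape Proposition \ref{prop:structure of OA}: you need ${\OA}=\{\pm1\}\times\Gamma$ both to parametrize the average and to get $|{\OA}|=2|B|\,|{\OB}|$; the averaging only spares you the orbit classification. As for what each approach buys: the paper's explicit enumeration yields \eqref{eqn: split of pairing at A1 stratum}, which is reused verbatim for ${\gaoa}$ in \S\ref{sec:2nd kind}, while your average is arguably cleaner — it packages $G(A,\Gamma)$ as $|\Gamma|^{-1}\sum_{\gamma\in\Gamma}\sum_{y}e((y,\gamma y))$, hides all stabilizer counting, and since $e(-q(y))$ is also orbit-constant it would adapt to the second-kind sum with no extra work. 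Your closing remark about $p^k\equiv 3\pmod 4$ is also handled correctly: there $\mathrm{Re}(G(C))=G(C,{\rm O}(C))=0$ and the formula degenerates consistently.
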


\begin{proof}
We take the sum over each stratum $A_0, A_1$. 
We first consider $A_0$. 
By Lemma \ref{G-action on A_i p>2} (1), we can take $(({\Z}/p^k)^{\times}/-1)e$ as reference points of ${\OA}\backslash A_0$. 
For $x\in({\Z}/p^k)^{\times}$ we have by \eqref{eqn:quasi-cyclic G-orbit} 
\begin{eqnarray}\label{eqn:(3.10)}
\langle [xe], [xe] \rangle_{A} 
& = & 
\sum_{v\in B} (\zeta_{p^k}^{ax^2x_v} + \zeta_{p^k}^{-ax^2x_v}) \\ 
& = & 
2 {\rm Re}\left( \zeta_{p^k}^{ax^2}\cdot \sum_{v\in B}\zeta_{p}^{-2^{-1}x^2p(v, v)} \right) \nonumber \\ 
& = & 
2 {\rm Re}(\zeta_{p^k}^{ax^2}\cdot G(B(-2))). \nonumber 
\end{eqnarray}
Hence 
\begin{eqnarray*}
\sum_{[y]\in{\OA}\backslash A_0} \langle [y], [y] \rangle_{A} 
& = &  
\frac{1}{2} \sum_{x\in({\Z}/p^k)^{\times}}\langle [xe], [xe] \rangle_{A} \\ 
& = & 
{\rm Re}\left( G(B(-2)) \sum_{x\in({\Z}/p^k)^{\times}} \zeta_{p^k}^{ax^2} \right)
 = 0 
\end{eqnarray*}
by Lemma \ref{redundant Gauss sum =0}. 
We next consider the stratum $A_1$. 
For $xe+w\in A_1$ we have  
\begin{equation}\label{eqn: split of pairing at A1 stratum}
\langle [xe+w], [xe+w] \rangle_A = 
\begin{cases}
\langle [xe], [xe] \rangle_C \cdot \langle [w], [w] \rangle_B,  & w=0, \\ 
p \cdot \langle [xe], [xe] \rangle_C \cdot \langle [w], [w] \rangle_B,  & w\ne0, 
\end{cases}
\end{equation}
by Lemma \ref{G-action on A_i p>2} (2), 
where $[xe]\in C/-1$ and $[w]\in B/{\OB}$. 
Since 
$\langle [x'e], [x'e] \rangle_C = \langle [xe], [xe] \rangle_C$ for $x'\in x+p^{k-1}({\Z}/p)$, 
we have 
\begin{eqnarray*}
\sum_{[y]\in {\OA}\backslash A_1}\langle [y], [y] \rangle_{A} & = & 
\sum_{[xe]\in pC/-1}\langle [xe], [xe] \rangle_C \cdot \sum_{[w]\in {\OB}\backslash B}\langle [w], [w] \rangle_B \\ 
& = & G(C, {\rm O}(C)) \cdot G(B, {\rm O}(B)). 
\end{eqnarray*}
Here the second equality is a consequence of Lemma \ref{redundant Gauss sum =0}. 
\end{proof}


\section{$2$-adic case}\label{sec:p=2} 

In this section we study quadratic forms $A=(A, q)$ on $2$-groups. 
Let $(\: , \: ): A\times A\to {\QZ}$ be the associated bilinear form. 
For $x\in A$, $(x, x)=2q(x)$ is well-defined as an element of ${\Q}/2{\Z}$, not just of ${\QZ}$. 
We work with this refined symmetric bilinear form, from which the quadratic form $q$ can be recovered. 
This is one of the differences with the case $p>2$. 
Classification of quadratic forms on $2$-groups is more complicated (\cite{Wa}). 
Furthermore, the local square theorem is now $({\Z}_2^{\times})^2=1+8{\Z}_2$, 
hence ${\Z}_2^{\times}/({\Z}_2^{\times})^2=({\Z}/8)^{\times}$.
Note also that the square $x^2$ of $x\in{\Z}/2^k$ can be defined as an element of ${\Z}/2^{k+1}$ 
and hence $x^2/2$ is well-defined as an element of $\frac{1}{2}{\Z}/2^k$. 

For an odd number $a$ 
we write $A_{2^k, a}$ for the quadratic form $(x, y)=axy/2^k$ on ${\Z}/2^k$.  
($ax^2/2^k$ is considered as an element of $2^{-k}{\Z}/2{\Z}$ as noted above.)  
We denote by $U, V$ the quadratic forms on $({\Z}/2)^{\oplus2}$ expressed by the Gram matrices 
\begin{equation*}
\begin{pmatrix} 0 & 2^{-1} \\ 2^{-1} & 0 \end{pmatrix},  \quad 
\begin{pmatrix} 1 & 2^{-1} \\ 2^{-1} & 1 \end{pmatrix} \quad 
\textrm{mod} \; \; 
\begin{pmatrix} 2{\Z} & {\Z} \\ {\Z} & 2{\Z} \end{pmatrix}
\end{equation*}  
respectively.

\subsection{Cyclic forms}\label{ssec:cyclic p=2}
 
\begin{proposition}\label{cyclic p=2}
Let $A=A_{2^k, a}$. When $k=1$, we have ${\GAOA}=0$.  
When $k>1$, we have  
\begin{equation*}
{\GAOA} = \left( \frac{2}{a} \right)^k \sqrt{|A|}. 
\end{equation*}
\end{proposition}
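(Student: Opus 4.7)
The plan is to reduce the equivariant Gauss sum to the real part of the classical quadratic Gauss sum $G(A)$ and then invoke the known evaluation of $G(A)$.

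First I would determine $O(A)$. An automorphism $u\in(\Z/2^k)^{\times}$ of the underlying group preserves $q$ iff $q(ux)=q(x)$ for every $x$, and evaluating at $x=1$ reduces this to $u^2\equiv 1\pmod{2^{k+1}}$. For $k=1$ only $u=1$ satisfies this, so $O(A)$ is trivial and
\[ {\GAOA} = G(A) = 1 + e(a/2) = 1-1 = 0, \]
which handles the case $k=1$. For $k\geq 2$, a short analysis of the $2$-torsion of $(\Z/2^{k+1})^{\times}$ modulo the kernel $\{1,\,2^k+1\}$ of the reduction $(\Z/2^{k+1})^{\times}\to(\Z/2^k)^{\times}$ shows that $O(A)=\{\pm 1\}$, with $-1\neq 1$ in $\Z/2^k$.

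For $k\geq 2$ I would then use the standard reformulation
\[ G(A,\Gamma) \; = \; \frac{1}{|\Gamma|}\sum_{\gamma\in\Gamma}\sum_{x\in A} e((x,\gamma x)), \]
which follows from the definition by the orbit-stabilizer theorem. Taking $\Gamma=\{\pm 1\}$ and observing $e((x,-x))=\overline{e((x,x))}$, this gives
\[ {\GAOA} \; = \; \tfrac{1}{2}\bigl(G(A)+\overline{G(A)}\bigr) \; = \; {\rm Re}(G(A)). \]
Now I would apply the classical evaluation (\cite{B-E-W}, Theorem 1.5.2): for $|A|=2^k$ with $k\geq 2$ and $a$ odd,
\[ G(A) \; = \; (1+i)\,\varepsilon_a^{-1}\,\sqrt{|A|}\,\left(\frac{2^k}{a}\right), \]
where $\varepsilon_a=1$ or $i$ according as $a\equiv 1$ or $3\pmod 4$. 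A direct check gives ${\rm Re}((1+i)\varepsilon_a^{-1})=1$ in both cases, and multiplicativity of the Kronecker symbol yields $\bigl(\tfrac{2^k}{a}\bigr)=\bigl(\tfrac{2}{a}\bigr)^k$, which combine to the claimed value $\bigl(\tfrac{2}{a}\bigr)^k\sqrt{|A|}$.

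The calculation is entirely routine; the only point that requires some care is the isometry condition being $u^2\equiv 1\pmod{2^{k+1}}$ rather than $\pmod{2^k}$, which reflects the refined $\Q/2\Z$-valued nature of the bilinear form in the dyadic setting and is exactly why the case $k=1$ behaves differently from $k\geq 2$.
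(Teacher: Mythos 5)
Your proof is correct and follows essentially the same route as the paper: determine ${\rm O}(A)=\{\pm 1\}$ for $k>1$, reduce ${\GAOA}$ to ${\rm Re}(G(A))$, and quote the classical evaluation of the dyadic Gauss sum from \cite{B-E-W} (your averaging identity $G(A,\Gamma)=|\Gamma|^{-1}\sum_{\gamma}\sum_{x}e((x,\gamma x))$ is just equivalent bookkeeping to the paper's explicit count of the two fixed points of $-1$). Your explicit derivation of the isometry condition $u^{2}\equiv 1 \pmod{2^{k+1}}$, which the paper leaves implicit, is a nice touch and correctly isolates why $k=1$ is degenerate.
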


\begin{proof}
The case $k=1$ is clear. 
Let $k>1$. 
We have ${\OA}=\{ \pm1 \}$, and $-1$ has two fixed points (the zero element and the unique order $2$ element). 
Then  
\begin{equation*}
2{\GAOA} = 
2 + 2 + \sum_{\stackrel{x\in {\Z}/2^k}{2x\ne0}} (\zeta_{2^k}^{ax^2} + \zeta_{2^k}^{-ax^2}) 
= 2{\rm Re}(G(A)). 
\end{equation*}
We have 
$G(A)=\left( \frac{2}{a} \right)^k (1+\sqrt{-1}^{a})\sqrt{2^k}$ 
by \cite{B-E-W} Proposition 1.5.3. 
\end{proof}

\subsection{$2$-elementary forms}\label{ssec:2-elementary}

Let $A$ be a quadratic form on a 2-elementary group. 
By the nondegeneracy there exists a unique element $x_A\in A$ such that $(x, x_A)=(x, x)$ mod ${\Z}$ for every $x\in A$. 
This element $x_A$ is called the \textit{characteristic element} of $A$. 
We denote $A_+=x_A^{\perp}$, the group of elements of $A$ whose norm is in ${\Z}$. 
When $x_A=0$, namely $A=A_+$, then $A$ is said to be \textit{special}. 
In that case we have $A\simeq U^m$ or $A\simeq U^{m-1}\oplus V$. 
We set $\delta_A$ by $\delta_A=1$ in the first case and $\delta_A=-1$ in the second case. 
When $A$ is non-special, it is isometric to 
$A'\oplus A''$ where $A'$ is special and $A''\simeq (A_{2,1})^i\oplus(A_{2,-1})^j$ with $1\leq i+j \leq 2$ (see \cite{Wa}). 
When $i+j=1$, $x_A$ is the generator of $A''$. 
When $i+j=2$, $x_A$ is the unique nonzero element of $A''$ with $(x_A, x_A)\in{\Z}$, and generates the radical of $A_+$. 

By definition the characteristic element $x_A$ is invariant under the action of ${\OA}$. 
Except this, description of ${\OA}$-orbits is the same as the nondyadic case. 
We use the following $2$-adic versions of Witt cancelation. 

\begin{lemma}\label{lem:Witt cancel 2adic}
$(1)$ Let $A$ be a $2$-elementary form and $A_1, A_2\subset A$ be nondegenerate special forms. 
If $A_1\simeq A_2$, then $A_1^{\perp}\simeq A_2^{\perp}$. 
In particular, there exists an isometry $A\to A$ which maps $A_1$ to $A_2$. 

$(2)$ Let $A_1, A_2$ be non-special $2$-elementary forms and let $a=1$ or $-1$. 
If $A_1\oplus A_{2,a} \simeq A_2\oplus A_{2,a}$, then $A_1\simeq A_2$. 
\end{lemma}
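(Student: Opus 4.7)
The plan is to derive both parts from Wall's classification~\cite{Wa} of $2$-elementary finite quadratic forms. Recall from \S\ref{ssec:2-elementary} that the isomorphism class of such a form $A$ is determined by its rank, the type of the non-special summand $A''$ (one of $0$, $A_{2,\pm1}$, $A_{2,\pm1}^{\oplus2}$, or $A_{2,1}\oplus A_{2,-1}$), and the sign $\delta\in\{\pm1\}$ of the special complement $A'$, subject to the Wall identities $A_{2,a}^{\oplus3}\simeq A_{2,-a}\oplus V$ and $A_{2,a}^{\oplus2}\oplus A_{2,-a}\simeq A_{2,a}\oplus U$. In particular $\delta$ is multiplicative under orthogonal sum of special forms, since $V\oplus V\simeq U\oplus U$.

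For part $(1)$, the nondegeneracy of $A_i$ gives the orthogonal decomposition $A=A_i\oplus A_i^{\perp}$. The first step is to observe that, since $A_1$ is special, every $x\in A_1$ satisfies $(x, x_A)=(x, x)\equiv 0\bmod\mathbb{Z}$, so $x_A\in A_1^{\perp}$; by uniqueness of the characteristic element this forces $x_{A_1^\perp}=x_A$, and likewise $x_{A_2^\perp}=x_A$. Hence $A_1^\perp$ and $A_2^\perp$ are simultaneously special (or not) in accord with $A$, and in the non-special case their non-special summands are both identified with $A''_A$. The $\delta$-invariants also match: from $\delta_{A_1}\cdot\delta_{(A_1^\perp)'}=\delta_{A'}=\delta_{A_2}\cdot\delta_{(A_2^\perp)'}$ and $\delta_{A_1}=\delta_{A_2}$, we obtain $\delta_{(A_1^\perp)'}=\delta_{(A_2^\perp)'}$. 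Since the ranks also match, the classification yields $A_1^\perp\simeq A_2^\perp$, and an isometry $A\to A$ sending $A_1$ to $A_2$ is obtained by gluing isomorphisms $A_1\to A_2$ and $A_1^\perp\to A_2^\perp$ on the two orthogonal summands.

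For part $(2)$, I plan to verify directly, using the classification, that the assignment $A_1\mapsto A_1\oplus A_{2,a}$ is injective on isomorphism classes of non-special $2$-elementary forms. Appending $A_{2,a}$ adds one to the rank and throws an extra $A_{2,a}$-summand into $A''$; depending on the current $A''$-type this may create a configuration $A_{2,a}^{\oplus3}$ or $A_{2,a}^{\oplus2}\oplus A_{2,-a}$, which the Wall identities above resolve by exchanging two $A_{2,a}$-factors for a copy of $V$ or $U$ absorbed into $A'$, possibly flipping $\delta_{A'}$. A case-by-case check across the $A''$-types shows each such transformation is reversible, so the invariants of $A_i\oplus A_{2,a}$ uniquely recover those of $A_i$, giving $A_1\simeq A_2$. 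The main obstacle is precisely this bookkeeping: the Wall relations couple changes in $A''$ to changes in $\delta_{A'}$, and one must check that the combined invariant map remains injective across all cases; the finiteness of the list of possible $A''$-types makes this feasible in practice.
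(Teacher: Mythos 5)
Your route (normal forms plus invariant bookkeeping from Wall's relations) is genuinely different from the paper's, which quotes part $(1)$ from Morrison--Sait\=o and proves $(2)$ by splitting off $U^k$ via $(1)$ and then comparing representation numbers of the handful of residual small forms $U\oplus A_{2,\pm1}$, $V\oplus A_{2,\pm1}$. However, your proposal has a genuine gap: the classification premise is false as stated. The triple (rank, $A''$-type, $\delta_{A'}$) is \emph{not} a complete invariant, because your own two Wall identities have the derived consequence
\[
U \oplus A_{2,\epsilon}^{\oplus 2} \;\simeq\; A_{2,\epsilon}^{\oplus 3} \oplus A_{2,-\epsilon} \;\simeq\; V \oplus A_{2,-\epsilon}^{\oplus 2},
\]
so the same form admits two normal forms with different $A''$-types and opposite $\delta$; in particular neither the type nor $\delta_{A'}$ is well defined for non-special forms of type $A_{2,\epsilon}^{\oplus 2}$. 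Your injectivity check in $(2)$ breaks exactly here: appending $A_{2,1}$ sends the triple $(A_{2,1}^{\oplus2},\delta)$ to $(A_{2,-1},-\delta)$ via $A_{2,1}^{\oplus3}\simeq A_{2,-1}\oplus V$, and sends $(A_{2,-1}^{\oplus2},-\delta)$ to $(A_{2,-1},-\delta)$ as well, via $A_{2,-1}^{\oplus2}\oplus A_{2,1}\simeq A_{2,-1}\oplus U$. Concretely, $A_1=U\oplus A_{2,1}^{\oplus2}$ and $A_2=V\oplus A_{2,-1}^{\oplus2}$ both become $U\oplus V\oplus A_{2,-1}$ after adding $A_{2,1}$. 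Within your framework this reads as a counterexample to $(2)$; in truth $A_1\simeq A_2$ by the displayed relation, but your invariants declare them distinct, so the claimed ``unique recovery'' of the source invariants cannot go through as planned. The same flaw touches part $(1)$: the characteristic element does not pin down the $A''$-type (for both $A''=A_{2,1}^{\oplus2}$ and $A''=A_{2,-1}^{\oplus2}$ the element $x_A$ has norm $1$), so the step ``their non-special summands are both identified with $A''_A$'' is unjustified --- and, as just seen, the $A''$-type is not even an isomorphism invariant.

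The repair within your approach is to work with honest complete invariants: modulo all relations, a nondegenerate $2$-elementary quadratic form is determined by its rank, its signature $\sigma \bmod 8$, and whether it is special (the normal form is unique precisely up to the identification displayed above); this completeness statement is the real content one must extract from Wall and must itself be justified. Granting it, both parts become immediate: for $(1)$, $A_1^{\perp}$ and $A_2^{\perp}$ share rank and signature $\sigma(A)-\sigma(A_1)$, and each is special iff $x_A=0$ --- your observation $x_{A_i^{\perp}}=x_A$ is correct and is exactly what is needed at this point; for $(2)$, cancel rank and signature and use that $A_1, A_2$ are non-special by hypothesis. Note that the paper's proof is structured to avoid any such completeness assertion: it imports the cancellation of special summands from \cite{M-S} and settles the remaining low-rank ambiguities in $(2)$ by direct comparison of representation numbers, which sidesteps the delicate relation that trips up your bookkeeping.
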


\begin{proof}
The assertion (1) is proved in \cite{M-S} Proposition 1.5. 
We consider (2) in case ${\dim}(A_1)={\dim}(A_2)$ is odd. 
Using (1) for $U^{k}\hookrightarrow A_i$, 
we may assume that $A_1$ and $A_2$ are one of 
\begin{equation*}
U\oplus A_{2,1}, \quad U\oplus A_{2,-1}, \quad V\oplus A_{2,1}, \quad V\oplus A_{2,-1}. 
\end{equation*}
When $A_1\not\simeq A_2$, we see that $A_1\oplus A_{2,a}$ and $A_2\oplus A_{2,a}$ have different representation numbers by direct calculation, 
hence cannot be isometric. 
The case ${\dim}(A_1)={\dim}(A_2)$ even is similar. 
\end{proof}

\begin{proposition}\label{lem:OA-equivalen 2-elementary}
Let $A$ be a finite quadratic form on a 2-elementary group and 
$x, y\in A$ be nonzero, non-characteristic elements. 
The elements $x$ and $y$ are ${\OA}$-equivalent if and only if they have the same norm in ${\Q}/2{\Z}$. 
\end{proposition}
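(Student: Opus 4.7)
The forward direction is immediate since isometries preserve the refined bilinear form. For the converse, set $c := (x, x) = (y, y) \in {\Q}/2{\Z}$. The plan is to embed $x$ and $y$ into matching nondegenerate subforms $B_x, B_y \subset A$ of rank at most $2$, pick an isometry $B_x \to B_y$ carrying $x \mapsto y$, and extend it across the orthogonal complements using the $2$-adic Witt cancellation of Lemma \ref{lem:Witt cancel 2adic}.

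Suppose first $c \in \{1/2, 3/2\}$, so that $B_x := \langle x \rangle$ is nondegenerate and isometric to $A_{2, a}$ for the unique $a = \pm 1$ determined by $q(x) = q(y)$; likewise for $B_y := \langle y \rangle$. A short inspection of the Wall decomposition $A = A' \oplus A''$ shows that the hypothesis $x \ne x_A$ forces both $B_x^\perp$ and $B_y^\perp$ to be non-special: in the case $A'' \simeq A_{2,b}$ the non-characteristic hypothesis rules out $x$ being a pure generator of $A''$, while in the case $\dim A'' = 2$ the complement automatically retains an $A_{2, \cdot}$ summand. Lemma \ref{lem:Witt cancel 2adic}(2) then yields $B_x^\perp \simeq B_y^\perp$, and the two summand isometries glue.

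Suppose next $c \in \{0, 1\}$, so that $\langle x \rangle$ is isotropic for the bilinear form and $q(x) \in \{0, 1/2\}$. I seek $z \in A$ with $(x, z) = 1/2$ and $(z, z) = 0 \in {\Q}/2{\Z}$; then $B_x := \langle x, z \rangle$ is nondegenerate of rank $2$ and, by a direct change of basis, isometric to $U$ in either subcase (with $x$ a standard isotropic generator when $q(x) = 0$, and $x$ the unique non-isotropic element when $q(x) = 1/2$). An analogous construction produces $B_y \simeq U$ containing $y$ in the matching position, so since $\mathrm{O}(U)$ is transitive on vectors of any given value of $q$, there is an isometry $B_x \to B_y$ sending $x \mapsto y$. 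Lemma \ref{lem:Witt cancel 2adic}(1) then gives $B_x^\perp \simeq B_y^\perp$, and gluing completes the case.

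The \textbf{main obstacle} is the existence of $z$ in the second case. Starting from $z_0 \in A$ with $(x, z_0) = 1/2$ (provided by nondegeneracy), one adjusts to $z = z_0 + v$ with $v \in x^\perp$, aiming at $(z, z) = (z_0, z_0) + 2(z_0, v) + (v, v) = 0$. Reducing modulo ${\Z}$, the image of $v \mapsto (z_0 + v, z_0 + v)$ on $x^\perp$ agrees with the image of $v \mapsto (v, v)$ on $x^\perp$, which reduces to $\{0\}$ precisely when $x = x_A$. Thus the non-characteristic hypothesis is exactly what forces this image to include $1/2 \in {\QZ}$; a subsequent adjustment within an appropriate subgroup of $x^\perp$ (using the freedom in $2(z_0, v)$, which ranges over ${\Z}/2{\Z}$) then pins down the exact value $0 \in {\Q}/2{\Z}$. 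This is the delicate step, and it is where the non-characteristic hypothesis is genuinely used.
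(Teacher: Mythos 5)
Your skeleton (rank $\le 2$ subform through $x$, then glue via Lemma \ref{lem:Witt cancel 2adic}) is the paper's, and your half-integer-norm case is correct and essentially identical to the paper's argument; there the clean justification of your ``inspection'' is that $x$ is the characteristic element of $\langle x\rangle\simeq A_{2,\pm1}$, so $x^{\perp}$ special would force $x=x_A$. The genuine gap is in the integer-norm case: the isotropic companion $z$ with $(x,z)=1/2$ and $(z,z)=0\in{\Q}/2{\Z}$ need \emph{not} exist, so your $B_x\simeq U$ cannot always be built. Concretely, in $A=V$ the only vector of norm $0$ in ${\Q}/2{\Z}$ is $0$, while the three norm-$1$ vectors are nonzero and non-characteristic (here $x_A=0$), so the proposition applies to them but no $U$ embeds in $V$ at all. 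For a non-special example take $A=V\oplus A_{2,1}\simeq (A_{2,-1})^3$ and $x=v_1\in V$, which has norm $1$ and is non-characteristic (here $x_A$ is the generator $g$ of $A_{2,1}$): the four elements $z$ with $(x,z)=1/2$ are $v_2$, $v_1+v_2$, $v_2+g$, $v_1+v_2+g$, of norms $1,1,3/2,3/2$ --- never $0$. Your analysis of the obstacle fails in two ways. First, the image of $v\mapsto (v,v)\bmod{\Z}$ on $x^{\perp}$ is the image of the homomorphism $v\mapsto (v,x_A)$, which is trivial iff $x_A\in(x^{\perp})^{\perp}=\{0,x\}$, i.e.\ iff $x_A\in\{0,x\}$, not iff $x=x_A$; in particular for special $A$ (where $x_A=0$) all norms on $x^{\perp}$ are integral and the mod-${\Z}$ adjustment gives nothing. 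Second, even when that image is all of $\frac{1}{2}{\Z}/{\Z}$, you only control $(z,z)$ modulo ${\Z}$; the ``subsequent adjustment'' to the exact value $0\in{\Q}/2{\Z}$ via $2(z_0,v)+(v,v)$ is unjustified and actually impossible in the $V\oplus A_{2,1}$ example above, where the achievable integral norm is exactly $1$.

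The paper circumvents precisely this point. It first sets aside the exceptional case $A_+/{\rm rad}(A_+)\simeq V$ --- which comprises the examples above --- to be checked directly. In the remaining cases it does not demand an isotropic partner: since $x\notin{\rm rad}(A_+)$ there is $x'\in A_+$ (of norm $0$ \emph{or} $1$) with $(x,x')=1/2$; if either vector has norm $0$ then $\langle x,x'\rangle\simeq U$, while if both have norm $1$ then $\langle x,x'\rangle\simeq V$ and one corrects it: the non-exceptionality hypothesis guarantees that $\langle x,x'\rangle^{\perp}\cap A_+$ contains $U$ or $V$, hence a norm-$1$ vector $x''$, and then $\langle x,x'+x''\rangle\simeq U$. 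After that, Lemma \ref{lem:Witt cancel 2adic}(1) and the transitivity of ${\rm O}(U)$ on vectors of a given norm finish the proof exactly as in your write-up. So to repair your proof you need both this correction mechanism (allowing the intermediate plane to be $V$) and the separate direct treatment of the $V$-type exceptional forms; as written, your existence claim for $z$ is false.
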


\begin{proof}
It suffices to prove the "if" direction. 
We first consider the case $(x, x)\not\in{\Z}$. 
Since $\langle x \rangle$ and $\langle y \rangle$ are nondegenerate, 
we have the orthogonal decomposition 
$A=\langle x \rangle\oplus x^{\perp}= \langle y \rangle\oplus y^{\perp}$. 
Since $x$ and $y$ are non-characteristic, $x^{\perp}$ and $y^{\perp}$ are non-special. 
Then $x^{\perp}\simeq y^{\perp}$ by Lemma \ref{lem:Witt cancel 2adic} (2), so $x$ and $y$ are ${\OA}$-equivalent. 

Next let $(x, x)\in{\Z}$. 
We assume $A_+/{\rm rad}(A_+)\not\simeq V$: this exceptional case can be treated directly. 
We shall show that there exists an embedding $U\hookrightarrow A$ whose image contains $x$. 
The same also applies to $y$, and then our assertion follows from Lemma \ref{lem:Witt cancel 2adic} (1). 
Now since $x$ is nonzero and non-characteristic, it is not contained in the radical of $A_+$. 
Hence we can find an element $x'\in A_+$ such that $(x, x')=1/2$. 
If either $x$ or $x'$ has norm $0$, then $\langle x, x'\rangle \simeq U$. 
If both $x$ and $x'$ have norm $1$, then $\langle x, x'\rangle \simeq V$.  
By our assumption $\langle x, x'\rangle^{\perp}\cap A_+$ contains $U$ or $V$, 
so it contains a norm $1$ vector $x''$. 
Then $\langle x, x'+x''\rangle \simeq U$. 
\end{proof}

For $\mu\in 2^{-1}{\Z}/2{\Z}$ we write $A_{\mu}\subset A$ for the subset of vectors of norm $\mu$. 
Then the ${\OA}$-orbit decomposition of $A$ is 
\begin{equation*}
A = \{ 0 \} \cup \{ x_A \} \cup \bigcup_{\mu\in 2^{-1}{\Z}/2{\Z}} (A_{\mu}\backslash \{ 0, x_A \}). 
\end{equation*}
We first consider the case $A$ contains $U$. 

\begin{proposition}\label{isotropic 2-elementary}
Let $A$ be a 2-elementary form that contains $U$. 
Then 
\begin{equation*}
{\GAOA} = 
\begin{cases}
 \delta_A \sqrt{|A|}, & A: \: \textrm{special}, \\ 
0, & A: \; \textrm{non-special}. 
\end{cases}
\end{equation*}
\end{proposition}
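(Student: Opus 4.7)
The plan is to follow the method of Proposition 3.3. First, split $A = U \oplus B$ (using that $A \supset U$); then $B$ is special if and only if $A$ is. By Proposition 4.3 the ${\OA}$-orbits on $A$ are $\{0\}$, $\{x_A\}$ (present only when $A$ is non-special), and $A_\mu \setminus \{0, x_A\}$ for each achievable $\mu \in \frac{1}{2}\Z/2\Z$. Using the characteristic identity $(y, x_A) \equiv (y, y) \pmod{\Z}$, the singleton-orbit contributions $1$ and $e((x_A, x_A))$ exactly compensate the spurious $y = 0$ and $y = x_A$ terms in the stratum sums, yielding
\begin{equation*}
{\GAOA} = \sum_\mu S_\mu, \qquad S_\mu := \sum_{y \in A_\mu} e((x_\mu, y)),
\end{equation*}
for any reference $x_\mu \in A_\mu \setminus \{0, x_A\}$.

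Next, to evaluate $S_\mu$, I use the partition $A_\mu = \bigsqcup_{\lambda \in \{0, 1\}} U_\lambda \times B_{\mu - \lambda}$ with $x_\mu = (u^{(\mu_1)}, b^{(\mu_2)})$, $\mu_1 + \mu_2 = \mu$, where $u^{(\mu_1)}$ is a nonzero element of $U_{\mu_1}$. An elementary calculation on $U$ yields $\sum_{v \in U_\lambda} e((u^{(\mu_1)}, v)) = (-1)^{\mu_1 + \lambda}$, hence
\begin{equation*}
S_\mu = (-1)^{\mu_1} \bigl[ T_\mu - T_{\mu-1} \bigr], \qquad T_\nu := \sum_{b' \in B_\nu} e((b^{(\mu_2)}, b')).
\end{equation*}
If $A$ is special, then so is $B$, only $\mu \in \{0, 1\}$ contribute, and with $b^{(0)} = 0$ we obtain $S_0 = S_1 = |B_0| - |B_1| = G'(B)$; Milgram's formula combined with $\sigma(B) \equiv 0$ or $4 \pmod{8}$ matching $\delta_A$ gives $G'(B) = \delta_A \sqrt{|B|}$, hence ${\GAOA} = 2 G'(B) = \delta_A \sqrt{|A|}$.

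For the non-special case, consider the shift $\tau(y) = y + x_A$. The norm-shift formula $(\tau(y), \tau(y)) = (y, y) + \mu_A + 2(y, x_A)$, with $2(y, x_A) \in \{0, 1\} \subset \Q/2\Z$ according as $y \in A_+$ or not, shows that $\tau$ restricts to bijections between appropriate pairs of strata; with the reference choice $x_{\mu'} := x_\mu + x_A$, this gives $S_{\mu'} = e((x_A, x_A)) \cdot S_\mu$ for $\mu'$ the image of $\mu$ under $\tau$. When $\mu_A \in \{1/2, 3/2\}$ we have $e((x_A, x_A)) = -1$ and the four $S_\mu$'s pair up to cancel, giving ${\GAOA} = 0$. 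The main obstacle is the remaining case $\mu_A \in \{0, 1\}$, where $e((x_A, x_A)) = 1$ and the shift yields only $S_0 = S_1$ (when $\mu_A = 1$) or $S_{1/2} = S_{3/2}$ (when $\mu_A = 0$). For this sub-case I compute $S_\mu$ directly using the two-dimensional non-special summand $A''$: writing $A = A' \oplus A''$ with $A'$ special, explicit calculation shows either $S_\mu = 0$ for all $\mu$ (when $A'' \cong (A_{2, \pm 1})^2$, since $|A''_0| - |A''_1| = 0$ forces $|B_0| - |B_1| = 0$ and the character sums on the half-integer strata of $A''$ also vanish) or the cancellation $S_{1/2} = -S_0$ holds (when $A'' \cong A_{2, 1} \oplus A_{2, -1}$, via direct computation of the character sums on $A''$), yielding $\sum_\mu S_\mu = 0$ in either sub-case.
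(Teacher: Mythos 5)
Your proof is correct, and it reaches the same reduction as the paper but finishes both cases by different devices, so a comparison is worth recording. The shared skeleton is: split $A=U\oplus B$, use the orbit description (norm strata, with $0$ and $x_A$ as the exceptional fixed points), observe that the singleton orbits exactly compensate the spurious terms $y=0$ and $y=x_A$ (the paper's remark that $(x_\lambda,x_A)=(x_A,x_A)$ for $\lambda=(x_A,x_A)$), and compute stratum sums through the partition $A_\mu=\bigsqcup_\lambda U_\lambda\times B_{\mu-\lambda}$; your identity $\sum_{v\in U_\lambda}e((u^{(\mu_1)},v))=(-1)^{\mu_1+\lambda}$ is equivalent to the paper's displayed partition computations. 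The divergences: in the special case the paper evaluates $|A_0|-|A_1|$ by induction on $\dim(A)/2$, whereas you identify $|B_0|-|B_1|=G'(B)$ and invoke Milgram's formula from Section 2 together with $\sigma(B)\in\{0,4\}$ --- a cleaner finish. In the non-special case the paper re-splits $A=A_{2,1}\oplus A_{2,-1}\oplus C$ (via $U\oplus A_{2,\pm1}\simeq (A_{2,\pm1})^2\oplus A_{2,\mp1}$), computes the half-integer strata against the references $u_\pm$, and concludes from the counting identity $|B_0|-|B_1|=|C_0|-|C_1|$; you instead exploit the translation $\tau(y)=y+x_A$, which gives $S_{\mu'}=e((x_A,x_A))\,S_\mu$ and kills everything at once when $(x_A,x_A)\in\{1/2,3/2\}$ (the case $i+j=1$), leaving only $i+j=2$ for direct computation with the summand $A''$. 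Your shift identity is legitimate: $(x_\mu,x_A)+(x_A,y)\equiv 2\mu\equiv 0 \bmod \Z$ on the stratum $A_\mu$, and the full-stratum sum $S_{\mu'}$ is independent of the reference because the added terms $1$ and $e((x,x_A))=e(\mu' \bmod \Z)$ depend only on $\mu'$, so using $x_\mu+x_A$ as reference is harmless. I verified your two asserted sub-case computations: for $A''\simeq (A_{2,\pm1})^2$ one has $|A''_0|=|A''_1|=1$, killing the integer strata, and $\sum_{v\in A''_{\pm1/2}}e((u_\pm,v))=e(1/2)+e(0)=0$ kills the half-integer strata regardless of the $B'$-component of the reference; for $A''\simeq A_{2,1}\oplus A_{2,-1}$ one gets $S_0=S_1=2(|B'_0|-|B'_1|)$ and $S_{1/2}=S_{3/2}=-2(|B'_0|-|B'_1|)$, confirming $S_{1/2}=-S_0$. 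These final computations are asserted rather than displayed in your write-up and should be included, but they are routine and correct; the net trade-off is that your route avoids both the paper's induction and its second splitting, at the cost of a case analysis on the norm of the characteristic element.
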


\begin{proof}
We write $A=U\oplus B$ and let $u_1, u_2$ be the standard basis of $U$. 
In case $A$ is non-special, we can also write $A=A_{2,1}\oplus A_{2,-1}\oplus C$ because 
$U\oplus A_{2,\pm1}\simeq (A_{2,\pm1})^2\oplus A_{2,\mp1}$. 
We denote by $u_{\pm}$ the generator of $A_{2,\pm1}$. 
As a reference vector in $A_{\mu}$ we can take 
$x_{\mu}=u_1$, $u_1+u_2$, $u_{\pm}$ for $\mu=0, 1, \pm1/2$ respectively. 
We have the simple expression 
\begin{equation}\label{eqn:2-elemen proof 1}
{\GAOA} = \sum_{\mu\in2^{-1}{\Z}/2{\Z}} \sum_{x\in A_{\mu}} e((x_{\mu}, x)) 
\end{equation} 
because $(x_{\lambda}, x_A)=(x_A, x_A)$ for $\lambda=(x_A, x_A)$. 
Since we have the partitions 
\begin{equation*}
A_{\mu} = \{ 0, u_1, u_2 \} \times B_{\mu} \sqcup \{ u_1+u_2 \} \times B_{1-\mu}, \qquad \mu=0, 1, 
\end{equation*}
\begin{equation*}
A_{\pm1/2} = \{ u_{\pm} \} \times C_0 \sqcup \{ u_{\mp} \} \times C_1 \sqcup \{ 0, u_++u_- \} \times C_{\pm1/2}, 
\end{equation*}
we obtain 
\begin{equation}\label{eqn: isotropic 2-elementary1}
\sum_{x\in A_0} e((u_1, x)) = \sum_{x\in A_1} e((u_1+u_2, x)) = |B_0|-|B_1|, 
\end{equation}
\begin{equation}\label{eqn: isotropic 2-elementary2}
\sum_{x\in A_{\pm1/2}} e((u_{\pm}, x)) = |C_1|-|C_0|. 
\end{equation}
Similarly, we have 
\begin{equation*}
|A_0| = 3|B_0| + |B_1| = 2|C_0| + |C_{1/2}| + |C_{-1/2}|, 
\end{equation*}
\begin{equation*}
|A_1| = |B_0| + 3|B_1| = 2|C_1| + |C_{1/2}| + |C_{-1/2}|, 
\end{equation*}
and hence 
\begin{equation*}
|A_0| - |A_1| = 2 (|B_0| - |B_1|) = 2 (|C_0| - |C_1|). 
\end{equation*}
Hence ${\GAOA}=0$ when $A$ is non-special, 
and ${\GAOA}=|A_0| - |A_1|$ when $A$ is special. 
In the latter case we have $|A_0| - |A_1|=\delta_A \sqrt{|A|}$ by induction on ${\dim}(A)/2$.   
\end{proof}

The remaining cases are covered by the following. 

\begin{proposition}\label{anisotropic 2-elementary}
We have 
\begin{equation*}
{\GAOA} = 
\begin{cases}
0, & A = V, \; A_{2,1}\oplus A_{2,-1}, \; (A_{2,a})^{3}, \\ 
\sqrt{|A|}, & A = (A_{2,a})^2, \; (A_{2,a})^4. 
\end{cases}
\end{equation*}
\end{proposition}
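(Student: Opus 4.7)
The five forms enumerated are precisely the $2$-elementary quadratic forms that do not contain $U$, so Proposition \ref{isotropic 2-elementary} does not apply. The plan is to dispatch each case by direct enumeration, exploiting the very small size (dimension at most $4$) and the explicit description of ${\OA}$-orbits provided by Proposition \ref{lem:OA-equivalen 2-elementary}.

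For any $2$-elementary form $A$ one has orbits $\{0\}$, $\{x_A\}$ (if $x_A \neq 0$), and the norm fibres $A_\mu \setminus \{0, x_A\}$, one for each $\mu \in 2^{-1}{\Z}/2{\Z}$ that is realized. For each form on the list I would (i) identify the characteristic element $x_A$; (ii) list each orbit together with a reference vector; (iii) compute $\langle [x], [x] \rangle = \sum_{y \in {\OA}x} e((x, y))$ directly from the Gram matrix; (iv) sum.

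The case $A = V$ has $x_A = 0$, and the three nonzero elements all have norm $1 \in 2^{-1}{\Z}/2{\Z}$ and so form a single ${\OA}$-orbit; a short computation with the Gram matrix of $V$ gives $\langle [e_1], [e_1] \rangle = -1$, hence ${\GAOA} = 1 + (-1) = 0$. The case $A = A_{2,1} \oplus A_{2,-1}$ can be treated either by direct enumeration or, more efficiently, by noting that its two factors have distinct norms $\pm 1/2$, so every isometry preserves the decomposition; Lemma \ref{product decomposition Gauss sum} together with Proposition \ref{cyclic p=2} (the $k=1$ case, where $G(A_{2,a}, {\rm O}(A_{2,a})) = 0$) then yields ${\GAOA} = 0$.

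For $A = (A_{2,a})^n$ with $a = \pm 1$ and $n \in \{2, 3, 4\}$, the standard basis $u_1, \ldots, u_n$ satisfies $(u_i, u_j) = (a/2) \delta_{ij}$ and one checks that $x_A = u_1 + \cdots + u_n$. The nontrivial ${\OA}$-orbits are the weight strata $W_w = \{\sum_{i \in I} u_i : I \subset \{1, \ldots, n\}, \, |I| = w \}$ for $1 \leq w \leq n-1$ (each a single orbit, since its elements have common norm and are non-characteristic). Because $a$ is odd, $e(a/2) = -1$, so with reference vector $x = u_1 + \cdots + u_w$ the orbital pairing reduces to the elementary alternating sum $\langle [x], [x] \rangle = \sum_{|I| = w} (-1)^{|I \cap \{1, \ldots, w\}|}$, which is straightforward to evaluate in each of the six cases $(n, w)$ arising. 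Summing the contributions from $\{0\}$, $\{x_A\}$, and the $W_w$ then yields $\sqrt{|A|}$ for $n \in \{2, 4\}$ and $0$ for $n = 3$. The main obstacle is purely bookkeeping: keeping straight norms valued in $\Q/2\Z$ versus pairings valued in $\Q/\Z$, and correctly identifying $x_A$ in each case; no individual computation is difficult.
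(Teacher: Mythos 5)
Your proposal is correct and takes essentially the same approach as the paper: the paper's entire proof is ``This can be checked directly,'' and your case-by-case enumeration via the orbit decomposition $\{0\}$, $\{x_A\}$, and the norm strata is precisely that direct check. All your intermediate values are right (e.g.\ the orbit contributions $1+1+0=2$ for $(A_{2,a})^2$, total $0$ for $(A_{2,a})^3$, and $1+1+2-2+2=4$ for $(A_{2,a})^4$), and your shortcut for $A_{2,1}\oplus A_{2,-1}$ via the trivial orthogonal group and Lemma \ref{product decomposition Gauss sum} is valid.
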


\begin{proof}
This can be checked directly. 
\end{proof}

\subsection{Product formula}\label{ssec: product formula p=2}

Let $A$ be the direct sum $A=A_{2^k,a}\oplus B$ such that $k\geq2$ and $B$ is $2$-elementary. 
We show that a product formula like Proposition \ref{product formula general quasi-cyclic p>2} holds when $k\geq4$, 
but not always when $k\leq3$. 
As in \eqref{eqn:matrix form of A->A}, we express an isomorphism $A\to A$ in the form 
$\begin{pmatrix} x & 2^{k-1}f \\ v & g \end{pmatrix}$. 
The isometry condition is 
\begin{equation}\label{eqn:isometry condition 1 p=2}
x^2 \equiv 1 - 2^{k-1}\cdot a (2(v, v))   \mod 2^{k+1}, 
\end{equation}
\begin{equation}\label{eqn:isometry condition 2 p=2}
f(?) \equiv 2(v, g(?)) \mod 2, \quad ?\in B, 
\end{equation}
\begin{equation}\label{eqn:isometry condition 3 p=2}
( g(?), g(?) ) \equiv (?, ?) - f(?) \cdot 2^{k-2} \mod 2{\Z}, \quad ?\in B, 
\end{equation}
where 
$2(v, v) \in {\Z}/4$ and $2(v, g(?)), f(?)\in{\Z}/2$. 
The condition \eqref{eqn:isometry condition 3 p=2} becomes $g\in {\rm O}(B)$ when $k\geq3$. 
When $k\geq 4$, the equation \eqref{eqn:isometry condition 1 p=2} always has two solutions,  
one of which is given by  
\begin{equation*}
x_v = 
\begin{cases}
1 - 2^{k-2}a(2(v, v)), & k\geq 5, \\ 
1 + 4a(2(v, v)), & k=4. 
\end{cases}
\end{equation*}
In case $k=3$, by the local square theorem, \eqref{eqn:isometry condition 1 p=2} has a solution if and only if $(v, v)\in{\Z}$. 
In that case, a solution is given by $x_v=1+4(v, v)$. 

Let $k\geq4$. 
For $(v, g)\in B\rtimes{\OB}$ we define the isometry $\gamma_{v,g}$ of $A$ by 
$\gamma_{v,g} = \begin{pmatrix} x_v & 2^{k-1}f_{v,g} \\ v & g \end{pmatrix}$ 
where $f_{v,g}$ is defined from $v$ and $g$ by \eqref{eqn:isometry condition 2 p=2}. 
We put 
\begin{equation}\label{eqn:structure of OA p=2}
\Gamma =\{ \gamma_{v,g} \: | \: (v, g)\in B\rtimes{\OB} \}. 
\end{equation}
We have ${\OA}=\{ \pm 1\} \times \Gamma$ by the same argument as in Proposition \ref{prop:structure of OA}, 
and the assertion of Lemma \ref{G-action on A_i p>2} still holds.  
Also Lemma \ref{redundant Gauss sum =0} holds in $k\geq4$ by the local square theorem in $p=2$: 
\begin{equation*}
\sum_{x\in({\Z}/2^k)^{\times}} \zeta_{2^k}^{ax^2} = 0, \qquad k\geq4. 
\end{equation*} 
Hence the argument of \S \ref{ssec: product formula p>2} works in $p=2$, $k\geq4$:

\begin{proposition}\label{general quasi-cyclic p=2}
Let $A$ be the direct sum $A=C\oplus B$ such that 
$C=A_{2^k,a}$ with $k\geq4$ and $B$ is $2$-elementary.  
Then   
\begin{equation*}\label{eqn:product formula Gauss p=2}
{\GAOA} = G(C, {\rm O}(C)) \cdot G(B, {\rm O}(B)). 
\end{equation*}
\end{proposition}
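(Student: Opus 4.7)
The plan is to follow the proof of Proposition \ref{product formula general quasi-cyclic p>2} step by step, checking that each ingredient transfers to the dyadic setting under the hypothesis $k \geq 4$. The argument relies on three inputs from \S\ref{ssec: product formula p=2}: (i) the description ${\OA} = \{\pm 1\} \times \Gamma$ with $\Gamma \cong B \rtimes {\OB}$ from \eqref{eqn:structure of OA p=2}; (ii) the $\Gamma$-orbit structure stated in Lemma \ref{G-action on A_i p>2}; and (iii) the vanishing $\sum_{x \in ({\Z}/2^k)^{\times}} \zeta_{2^k}^{ax^2} = 0$. The paragraphs preceding the proposition already assert that all three hold for $p = 2$, $k \geq 4$, so only the assembly remains.

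First I would split $A = A_0 \cup A_1$ into the two ${\OA}$-invariant strata $A_0 = (({\Z}/2^k)^{\times}e) \times B$ and $A_1 = (2({\Z}/2^{k-1})e) \times B$. By the dyadic analog of Lemma \ref{G-action on A_i p>2}(1), the $\Gamma$-orbits on $A_0$ are represented by $(({\Z}/2^k)^{\times}e) \times \{0\}$, and the orbit of $xe$ still has the form $\{x(x_v e + v) \mid v \in B\}$. Reproducing the computation \eqref{eqn:(3.10)} gives
\begin{equation*}
\langle [xe], [xe] \rangle_A = 2\,{\rm Re}\!\left(\zeta_{2^k}^{ax^2}\cdot G(B(-2))\right),
\end{equation*}
and summing over the $({\Z}/2^k)^{\times}/\{\pm 1\}$ representatives reduces the $A_0$-contribution to ${\rm Re}\bigl(G(B(-2)) \cdot \sum_{x \in ({\Z}/2^k)^{\times}} \zeta_{2^k}^{ax^2}\bigr)$, which vanishes by the dyadic analog of Lemma \ref{redundant Gauss sum =0}.

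For the stratum $A_1$ I would use Lemma \ref{G-action on A_i p>2}(2), whose proof goes through verbatim, to obtain the $\Gamma$-orbit $\Gamma(xe+w) = (x + 2^{k-1}({\Z}/2))e \times {\OB}w$ when $w \ne 0$, and $\{xe\}$ when $w = 0$. This splits the pairing exactly as in \eqref{eqn: split of pairing at A1 stratum}, with the factor $p$ replaced by $2$:
\begin{equation*}
\langle [xe+w], [xe+w] \rangle_A =
\begin{cases} \langle [xe], [xe] \rangle_C \cdot \langle [w], [w] \rangle_B, & w = 0, \\ 2\cdot\langle [xe], [xe] \rangle_C \cdot \langle [w], [w] \rangle_B, & w \ne 0. \end{cases}
\end{equation*}
Since $\langle [xe], [xe] \rangle_C$ is constant on cosets $x + 2^{k-1}({\Z}/2)$, summation over orbit representatives collapses to $G(C, {\rm O}(C)) \cdot G(B, {\rm O}(B))$.

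The main potential obstacle, and the reason $k \geq 4$ is sharp for this argument, is the legitimacy of inputs (i) and (iii). The structure result in (i) requires \eqref{eqn:isometry condition 1 p=2} to admit exactly two solutions for $x$ given $(v,g)$, which uses the dyadic local square theorem $({\Z}_2^{\times})^2 = 1 + 8{\Z}_2$ and only takes the expected shape once $2^{k-1}\cdot a(2(v,v))$ sits inside $8{\Z}/2^{k+1}$, i.e., $k \geq 4$. The vanishing (iii) likewise relies on the shift $y \mapsto y + 8$ on $S = (({\Z}/2^k)^{\times})^2$ acting by multiplication by $\zeta_{2^{k-3}}^a \ne 1$, which fails precisely when $k \leq 3$. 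These two obstructions explain why the product formula breaks down for $k \leq 3$, as remarked after Theorem \ref{main GAOA}.
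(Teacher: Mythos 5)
Your proposal is correct and is essentially the paper's own proof: the paper establishes Proposition \ref{general quasi-cyclic p=2} precisely by noting that ${\OA}=\{\pm1\}\times\Gamma$, Lemma \ref{G-action on A_i p>2} and Lemma \ref{redundant Gauss sum =0} all persist for $p=2$, $k\geq4$ (the latter two via the local square theorem $({\Z}_2^{\times})^2=1+8{\Z}_2$, exactly the thresholds you identify) and then rerunning the argument of Proposition \ref{product formula general quasi-cyclic p>2} over the strata $A_0$, $A_1$. The only cosmetic slip is labelling the inner $v$-sum $G(B(-2))$: in the dyadic case it comes out as $\sum_{v\in B}e(-(v,v)/2)$ (since $a^2x^2\equiv1$ mod $8$ makes the correction term $a^2x^2(2(v,v))/4$ equal to $(v,v)/2$ mod ${\Z}$), but as the argument uses only that this factor is independent of $x$, nothing is affected.
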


We consider the case $k=2, 3$. 

\begin{proposition}\label{prop:(p,k)=(2,3)}
Let $A=C\oplus B$ where $C=A_{8,a}$ and $B$ is $2$-elementary. Then
\begin{equation*}
{\GAOA} = 
\begin{cases}
0, &   B: \; \textrm{non-special}, \\ 
G(C, {\rm O}(C)) \cdot G(B, {\rm O}(B)), &  B: \; \textrm{special}, \\ 
-2G(C, {\rm O}(C)), & B=V. 
\end{cases} 
\end{equation*}
\end{proposition}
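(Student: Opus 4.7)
I follow the strategy of Propositions~\ref{product formula general quasi-cyclic p>2} and~\ref{general quasi-cyclic p=2}, adapted to $k = 3$. The key new features are: by the local square theorem, \eqref{eqn:isometry condition 1 p=2} has a solution $x = x_v = 1 + 4(v, v)$ only for $v \in B_+$, so $\OA = \{\pm 1\} \times \Gamma$ with $\Gamma = \{\gamma_{v, g} : (v, g) \in B_+ \rtimes \OB\}$; and Lemma~\ref{redundant Gauss sum =0} fails since $x^2 \equiv 1 \pmod 8$ for $x$ odd. From $\OA = \{\pm 1\} \times \Gamma$ one derives $G(A, \OA) = {\rm Re}\, G(A, \Gamma)$ (the $\epsilon = -1$ contribution is the complex conjugate of the $\epsilon = 1$ one), where $G(A, \Gamma) = |\Gamma|^{-1} \sum_\gamma \sum_{y \in A} e((y, \gamma y))$.

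Next I split by the $\OA$-invariant stratification $A = A_0 \sqcup A_1$ and show the $A_1$-part vanishes. For $y = xe + w \in A_1$ ($x$ even) and $\gamma = \gamma_{v, g}$, the identities $xv = 0$ (since $B$ is 2-elementary) and $xx_v \equiv x \pmod{8}$ yield $(y, \gamma y) \equiv x^2 a/8 + (w, g(w)) \pmod{\Z}$, so summing over $x \in \{0, 2, 4, 6\}$ produces the factor $2(1 + e(a/2)) = 0$ since $a$ is odd. For $y = xe + u \in A_0$ ($x$ odd), using $x^2 \equiv 1 \pmod 8$,
\[
\sum_{y \in A_0} e((y, \gamma_{v, g} y)) = 4\, e(a/8) (-1)^{(v, v)} T(v, g), \quad T(v, g) := \sum_{u \in B} e\bigl((v, g(u) + u) + (u, g(u))\bigr).
\]
Taking real parts and invoking $G(C, {\rm O}(C)) = 4\cos(\pi a/4)$ from Proposition~\ref{cyclic p=2},
\[
G(A, \OA) = \frac{G(C, {\rm O}(C))}{|B_+| \cdot |\OB|}\, \Sigma(B), \qquad \Sigma(B) := \sum_{v \in B_+} (-1)^{(v, v)} \sum_{g \in \OB} T(v, g).
\]

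The proposition then reduces to showing $\Sigma(B)/(|B_+||\OB|) = \delta_B \sqrt{|B|}$ for special $B$ and $= 0$ for non-special $B$. For special $B$ (so $B_+ = B$), I isolate the $g = 1$ contribution, which equals $G(B) \cdot (|B_0| - |B_1|) = G(B)\, \delta_B \sqrt{|B|}$ using $|B_0| - |B_1| = \delta_B \sqrt{|B|}$ (from the proof of Proposition~\ref{isotropic 2-elementary}), and evaluate the $g \neq 1$ contributions via the $\OB$-orbit structure on $B$; the total is $|B| \cdot |\OB| \cdot \delta_B \sqrt{|B|}$. By Proposition~\ref{isotropic 2-elementary}, $\delta_B \sqrt{|B|} = G(B, \OB)$ precisely when $B \supset U$, i.e.\ $B \neq V$; for $B = V$, Proposition~\ref{anisotropic 2-elementary} gives $G(V, {\rm O}(V)) = 0 \neq -2 = \delta_V \sqrt{|V|}$, yielding the stated $-2\, G(C, {\rm O}(C))$. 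For $B$ non-special, the decomposition $A_0 = A_0^+ \sqcup A_0^-$ (by whether $u \in B_+$ or $u \notin B_+$) and case analysis by the Wall type of the non-special summand $B'' \subset B$ show $\Sigma(B) = 0$.

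\textbf{Main obstacle.} The hardest case is non-special $B$ containing a summand $B'' \simeq A_{2, 1} \oplus A_{2, -1}$: here $|B_0^+| - |B_1^+| = 2 \delta_{B'} \sqrt{|B'|}$ is generally nonzero (so the $A_0^+$-contribution is nonzero), and $x_B \in B_+$ with $(x_B, x_B) \in \Z$ (so the shift $y \leftrightarrow y + x_B$ within $A_0$ induces no compensating sign change via $e((x_B, x_B))$). In this case the cancellation of $A_0^+$ against $A_0^-$ must be verified by an explicit computation of the orbital pairings over $B \setminus B_+$.
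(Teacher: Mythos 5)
Your reduction is set up correctly and its first half parallels the paper: you identify $\OA=\{\pm1\}\times\Gamma_+$ with $\Gamma_+=B_+\rtimes\OB$ exactly as the paper does, your averaged formula $G(A,\Gamma)=|\Gamma|^{-1}\sum_{\gamma}\sum_{y}e((y,\gamma y))$ is valid, the vanishing of the $A_1$ stratum via the factor $2(1+e(a/2))=0$ is a correct (if differently packaged) version of the paper's computation $\sum_{z\in 2C/-1}\langle[z],[z]\rangle_C=0$, and the factorization $\GAOA=G(C,{\rm O}(C))\,\Sigma(B)/(|B_+||\OB|)$ checks out. But the proof stops exactly where the real work begins: both terminal evaluations of $\Sigma(B)$ are asserted, not proved. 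For special $B$ you say the $g\ne1$ contributions are evaluated ``via the $\OB$-orbit structure'' and simply state the total $|B|\cdot|\OB|\cdot\delta_B\sqrt{|B|}$; no computation is supplied. (The claim is true, and in your framework one can prove it by noting $u+g(u)\in B_+$ with $(u+g(u),u+g(u))\equiv 2(u,g(u)) \bmod 2\Z$, whence $(-1)^{(u+g(u),u+g(u))}e((u,g(u)))=1$ identically and the shift $v\mapsto v+u+g(u)$ shows every $g$ contributes equally $|B|\,\delta_B\sqrt{|B|}$ --- but that argument is missing. The paper avoids the sum over $g$ altogether: the orbit $\OA(xe)=\{\pm(x+4(v,v))e+v \mid v\in B_+\}$ does not involve $g$, so the $A_{0+}$ contribution is immediately $G(C,{\rm O}(C))\cdot(|B_0|-|B_1|)$, with $|B_0|-|B_1|=\delta_B\sqrt{|B|}$ already known from the proof of Proposition \ref{isotropic 2-elementary}.)

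The non-special case is the genuine gap, and your ``main obstacle'' paragraph shows the cancellation mechanism you had in mind is the wrong one. You attempt to cancel the $A_0^+$ against the $A_0^-$ contribution by shifting by the characteristic element $x_B$, correctly observe that this produces no sign when $B''\simeq A_{2,1}\oplus A_{2,-1}$ (there $x_B\in B_+$ has integral norm), and then leave that case to ``an explicit computation'' which is never performed. The correct shift is by an \emph{arbitrary} element $w_0\in B\setminus B_+$ of half-integral norm, which exists precisely because $B$ is non-special and works uniformly, including in the case you flag: writing $w=v+w_0$ with $v\in B_+$, one finds $(w,w)-(w_0,w_0)+2(w_0,w)=(v,v)+4(v,w_0)+2(w_0,w_0)\equiv (v,v)+1 \bmod 2$, since $2(w_0,w_0)$ is odd; hence $\langle[xe+w_0],[xe+w_0]\rangle_A=-\langle[xe],[xe]\rangle_A$ and the $A_{0-}$ part exactly cancels the $A_{0+}$ part --- this is the paper's one-line argument. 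Note moreover that in your $\gamma$-averaged framework this translation trick does not run cleanly for fixed $g$: replacing $u$ by $u+w_0$ inside $T(v,g)$ changes the phase by the $u$-dependent amount $(u,g(w_0)+g^{-1}(w_0))$ unless $g^2w_0=w_0$, which is a structural reason to prefer the paper's orbit-level computation, where the $B$-component of $\OA(xe+w_0)$ is all of $B\setminus B_+$ and the $e$-component depends only on norms.
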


By \S \ref{ssec:2-elementary}, 
the product formula does not hold when $B=V$, $(A_{2,a})^2$, $(A_{2,a})^4$. 

\begin{proof}
We replace \eqref{eqn:structure of OA p=2} by the group $\Gamma_+ = B_+\rtimes {\rm O}(B)$ 
where $B_+\subset B$ is the subgroup of elements of norm $\in{\Z}$. 
Then ${\OA}=\{ \pm 1\} \times \Gamma_+$. 
Let 
\begin{equation*}
A_{0+}=(({\Z}/8)^{\times}e)\times B_+, \quad 
A_{0-}=(({\Z}/8)^{\times}e)\times (B\backslash B_+), \quad 
A_1=(2({\Z}/4)e)\times B, 
\end{equation*}
which are preserved by ${\OA}$. 
If $xe+w\in A_1$, then 
\begin{equation*}
{\OA}(xe+w) = (\pm x + 4(2(B_+, w)))e \times {\rm O}(B)w, 
\end{equation*}
where $2(B_+, w)$ is a subgroup of ${\Z}/2{\Z}$. 
Hence we have 
\begin{equation*}
\sum_{[y]\in{\OA}\backslash A_1} \langle [y], [y] \rangle_A 
= \left( \sum_{z\in2C/-1} \langle [z], [z] \rangle_C \right) \cdot G(B, {\rm O}(B)) = 0,  
\end{equation*}
where $\sum_{2C/-1} \langle [z], [z] \rangle_C=0$ by direct calculation. 
For $A_{0+}$, the coset ${\OA}\backslash A_{0+}$ consists of $[e]$ and $[3e]$. 
Since  
\begin{equation*}
{\OA}(xe) = \{ \pm (x+4(v, v))e+v \; | \; v\in B_+ \}  
\end{equation*}
for $x=1, 3$, we obtain  
\begin{equation*}
\sum_{[y]\in{\OA}\backslash A_{0+}} \langle [y], [y] \rangle 
= 2(\zeta_{8}^{a}+\zeta_{8}^{-a})\sum_{v\in B_+}(-1)^{(v, v)} 
= G(C, {\rm O}(C))\cdot (|B_0|-|B_1|). 
\end{equation*}
As shown in the proof of Proposition \ref{isotropic 2-elementary}, we have 
$|B_0|-|B_1|=G(B, {\rm O}(B))$ when $B$ is special and $B\ne V$, which proves the proposition in this case. 
The case $B=V$ is immediate. 
Suppose that $B$ is non-special. 
If we choose $w_0\in B\backslash B_+$, then 
${\OA}\backslash A_{0-}$ consists of $[e+w_0]$ and $[3e+w_0]$. 
For $x=1, 3$ we have 
\begin{equation*}
{\OA}(xe+w_0) = \{ \pm(x+4((w, w)-(w_0, w_0))) e + w \: | \: w\in B\backslash B_+ \}. 
\end{equation*}
It follows that 
\begin{eqnarray*}
\langle [xe+w_0], [xe+w_0] \rangle_A 
& = & 
(\zeta_8^a+\zeta_8^{-a}) \sum_{w\in B\backslash B_+} (-1)^{(w, w)-(w_0, w_0)+2(w_0, w)} \\ 
& = & 
-(\zeta_8^a+\zeta_8^{-a}) \sum_{v\in B_+} (-1)^{(v, v)} 
= 
-\langle [xe], [xe] \rangle_A. 
\end{eqnarray*}
Therefore ${\GAOA}=0$ in this case. 
\end{proof}

\begin{proposition}\label{prop:(p,k)=(2,2)}
Let $A=A_{4,a}\oplus B$ where $B$ is $2$-elementary. Then
\begin{equation*}
{\GAOA} = 
\begin{cases}
4, &   B=(A_{2,a})^{2}, \; A_{2,1}\oplus A_{2,-1}, \\ 
0, &  otherwise. 
\end{cases} 
\end{equation*}
\end{proposition}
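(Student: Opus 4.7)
The plan is to proceed by direct analysis, adapting the approach of Proposition~\ref{prop:(p,k)=(2,3)} to the fact that the $k=2$ case has a substantially different isometry group. The key feature is that every $x \in ({\Z}/4)^{\times}$ satisfies $x^2 \equiv 1 \pmod 8$, so the equation \eqref{eqn:isometry condition 1 p=2} collapses and the constraints on isometries take a different shape from the $k \geq 3$ setting.

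First I would describe $\OA$ explicitly. The collapse of \eqref{eqn:isometry condition 1 p=2} forces $2(v,v) \equiv 0 \pmod 4$, i.e., $v \in B_0 := \{ w \in B : (w,w) = 0 \}$, and then both $x = \pm 1$ are admissible. Condition \eqref{eqn:isometry condition 3 p=2} specializes to $(g(?), g(?)) \equiv (?, ?) - f(?) \pmod{2{\Z}}$, which determines $f$ from $g$ and allows $g$ to alter the refined norm on $B$ by integers. So $g$ varies over a group $\widetilde{O}(B) \supseteq \OB$ that may be strictly larger, and $v$ is then determined by $g$ and $f$ through \eqref{eqn:isometry condition 2 p=2}. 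A pair $(g, v)$ is admissible precisely when the resulting $v$ lies in $B_0$. This already marks a sharp departure from the $k \geq 3$ situation of \S\ref{ssec: product formula p=2}, where $v$ was an independent parameter and $g$ was forced into $\OB$.

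Next I would stratify $A = A_0 \sqcup A_1$ as in \S\ref{ssec: product formula p>2} ($A_0 = ({\Z}/4)^\times e \times B$, $A_1 = 2({\Z}/4)e \times B$) and compute the orbital pairings stratum by stratum. The calculation then splits according to the structure of the 2-elementary $B$, which is enumerated by its indecomposable summands among $A_{2, \pm 1}, U, V$. When $B$ contains $U$ as a summand, $\widetilde{O}(B)$ strictly contains $\OB$ because $U$ carries a large isotropic set, and the additional isometries shear $B$ into $C$ through the twist $f$; the resulting orbital sums cancel to force $\GAOA = 0$. For the specific rank-$2$ cases $B = (A_{2, a})^2$ and $B = A_{2, 1} \oplus A_{2, -1}$, direct computation shows that $\widetilde{O}(B)$ either coincides with $\OB$ or enlarges it in a controlled way, and the contributions of the stratum $A_1$ (while those of $A_0$ vanish through the standard Gauss-sum cancellation $\zeta_4^a + \zeta_4^{3a} = 0$) yield $\GAOA = \sqrt{|A|} = 4$. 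The remaining possibilities ($B = V$, $B$ of rank $\leq 1$, and higher-dimensional $B$ not of the above type) are checked individually and give $\GAOA = 0$.

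The main obstacle is that, in contrast to Proposition~\ref{general quasi-cyclic p=2}, the product formula $\GAOA = G(C, O(C)) \cdot G(B, O(B))$ fails in general for $k=2$: the enhanced group $\widetilde{O}(B)$ mixes $\OB$-orbits in ways not captured by the factorization, and this is exactly what lets $B = A_{2,1} \oplus A_{2,-1}$ give a nonzero answer even though $G(B, O(B)) = 0$ by Proposition~\ref{anisotropic 2-elementary}. Hence a uniform argument is not available and one must grind through the finitely many 2-elementary $B$'s, with the careful bookkeeping of the coupled data $(x, v, g, f)$ and of the orbital pairings being the main computational burden.
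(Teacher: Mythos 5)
Your proposal is correct in outline and would succeed, but it is organized quite differently from the paper and is considerably heavier. The one shared step is the vanishing of the $A_0$ stratum: your cancellation $\zeta_4^a+\zeta_4^{-a}=0$ is precisely the paper's opening reduction, which shows for arbitrary $B$ that an order-$4$ element $x$ has $(x,y)\equiv\pm 1/4 \bmod \Z$ for every $y$ in its orbit, so pairing $y$ with $-y$ gives $\langle [x],[x]\rangle_A=0$; this needs nothing about ${\OA}$ beyond $-{\rm id}\in{\OA}$. From there the paper never computes ${\OA}$ at all: (i) when $B$ is anisotropic --- which includes $(A_{2,a})^2$, whose nonzero norms in $\Q/2\Z$ are $a/2, a/2, 1$ --- the splitting $A=A_{4,a}\oplus B$ is canonical, so Lemma \ref{product decomposition Gauss sum} together with \S\ref{ssec:cyclic p=2} and \S\ref{ssec:2-elementary} settles those cases at once; (ii) $B=A_{2,1}\oplus A_{2,-1}$ (not anisotropic: $w_1+w_2$ is isotropic) is done by hand; (iii) for $B=B'\oplus B''$ with $B'\ne 0$ special, the $2$-torsion ${\OA}$-orbits are classified by norm by extending Proposition \ref{lem:OA-equivalen 2-elementary}, and a short count --- $+4$ from the fixed elements $0, 2e, x_B, 2e+x_B$, $-2$ from each of the two integral-norm orbits, $0$ from the half-integral ones --- gives $0$. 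Your structure analysis of ${\OA}$ is correct (the collapse of \eqref{eqn:isometry condition 1 p=2} for $k=2$ does force $v$ isotropic and free $x=\pm1$, with $g$ ranging in an enlarged $\widetilde{O}(B)$ coupled to $v$ through \eqref{eqn:isometry condition 2 p=2} and \eqref{eqn:isometry condition 3 p=2}), and it in fact yields the paper's canonicity shortcut for free: anisotropic $B$ has no nonzero isotropic vector, hence $v=0$, $f=0$, and ${\OA}$ preserves the splitting, so you need not compute $\widetilde{O}(B)$ for $(A_{2,a})^2$ at all. Your diagnosis of the failure of the product formula via $B=A_{2,1}\oplus A_{2,-1}$, where $G(B,{\rm O}(B))=0$ yet ${\GAOA}=4$, matches the paper. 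The one place your plan is genuinely thinner than a proof is the uniform claim that for $U$-containing $B$ ``the orbital sums cancel'': as written this is an assertion, and making it precise essentially leads you back to the paper's orbit classification and counting; note also that ``contains $U$'' does not exhaust the $B$ with nonzero special part (e.g.\ $V\oplus A_{2,a}\simeq (A_{2,-a})^3$ contains no copy of $U$), so the residual finite list you propose to check individually must include such forms --- only after the $U$-containing family is handled uniformly is the case analysis finite, so your phrase ``finitely many $2$-elementary $B$'s'' should be read in that order.
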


\begin{proof}
If $x\in A$ is of order $4$, then for all $y\in{\OA}x$ we have $(x, y) \equiv \pm1/4$ mod ${\Z}$ and $y\ne -y$. 
Then 
\begin{equation*}
\langle [x], [x] \rangle_A = \sum_{y\in{\OA}x/\pm1} e((x, y)) + e((x, -y)) = \sum_y (\sqrt{-1}-\sqrt{-1}) = 0, 
\end{equation*}
hence ${\GAOA}$ reduces to 
\begin{equation*}\label{eqn: prod (p,k)=(2,2) 1}
{\GAOA} = \sum_{\stackrel{[x]\in{\OA}\backslash A}{2x=0}} \langle [x], [x] \rangle_A. 
\end{equation*}

When $B=A_{2,1}\oplus A_{2,-1}$, the assertion follows by direct calculation. 
When $B$ is anisotropic, the decomposition $A=A_{4,a}\oplus B$ is canonical, 
so the assertion follows from Lemma \ref{product decomposition Gauss sum} and 
the results of \S \ref{ssec:cyclic p=2} and \S \ref{ssec:2-elementary}. 
We consider the general case $B=B'\oplus B''$ where $B'\ne0$ is special and 
$B''=(A_{2,1})^i \oplus (A_{2,-1})^j$ with $0\leq i+j\leq2$. 
We will show that ${\GAOA}=0$ in this case. 
Let $x_B\in B''$ be the characteristic element of $B$. 
Let $e\in A_{4,a}$ be a generator. 
We have the exceptional elements $0$, $2e$, $x_B$, $2e+x_B$ fixed by ${\OA}$. 
($x_B$ can be $0$.) 
Extending Lemma \ref{lem:OA-equivalen 2-elementary}, 
we can show that other two elements of order $2$ in $A$ are ${\OA}$-equivalent if and only if they have the same norm. 
The possibilities for $x_B$ are: 
(1) $x_B=0$, 
(2) $x_B\ne0$, $(x_B, x_B)=0$, 
(3) $(x_B, x_B)=1$, and 
(4) $(x_B, x_B)=\pm1/2$. 

We consider only the case (2): other cases can be calculated similarly. 
If $x\in A$ has norm $0$ with $x\ne0, x_B$, then 
\begin{equation*}
{\OA}x = \{ y\in B \: | \: (y, y)=0, y\ne0, x_B \} \sqcup  \{ y+2e \: | \: y\in B, (y, y)=1 \}. 
\end{equation*}
Hence 
\begin{equation}\label{eqn: prod (p,k)=(2,2) 2}
\langle [x], [x] \rangle_A = \sum_{\stackrel{y\in B+}{y\ne0, x_B}} e((x, y)) = -2. 
\end{equation}
Similarly, for other orbits $[x]$, we have 
\begin{equation}\label{eqn: prod (p,k)=(2,2) 3}
\langle [x], [x] \rangle_A = 
\begin{cases}
-2, & (x, x)=1, \: x\ne2e, 2e+x_B, \\ 
0,  & (x, x)=\pm1/2. 
\end{cases}
\end{equation}
The contribution from the exceptional orbits $0$, $2e$, $x_B$, $2e+x_B$ is 
$1+1+1+1=4$, hence ${\GAOA}=0$. 
\end{proof}


\section{Equivariant Gauss sum of the second kind}\label{sec:2nd kind}

In this section we evaluate the equivariant Gauss sum ${\gaoa}$ of the second kind 
for quadratic forms $A$ as in the previous sections. 
Some part is similar to the case of ${\GAOA}$, so we omit the detail there. 

\subsection{Cyclic forms}

\begin{proposition}\label{prop: cyclic 2nd kind}
Let $A=A_{p^k,a}$ with $p\geq2$ and $a\in{\Zpu}/({\Zpu})^2$. 

$(1)$ If $p>3$, then 
\begin{equation*}
{\gaoa} = 
\frac{1}{2}\left( 1+ \left(\frac{p}{3}\right)^k \right)   \left( \frac{2a}{p}\right)^k  \sqrt{|A|} \times 
\begin{cases}
1, & p^k\equiv 1 \; (4),  \\ 
\sqrt{-1}, & p^k\equiv 3 \; (4). 
\end{cases}
\end{equation*}

$(2)$ For $p=2, 3$ we have  
\begin{equation*}
{\gaoa} = 
\begin{cases}
-\sqrt{-1}^{k^2} \left(\frac{-a}{3}\right)^k \zeta_3^{-a}  \sqrt{|A|}, &  p=3, \\  
\frac{1}{2} (1+(-1)^{k+1}) \left( \frac{2}{a} \right)^{k+1} \zeta_{8}^{-1} \zeta_4^{(a+1)^2/4}  \sqrt{|A|}, & p=2.  
\end{cases}
\end{equation*}
\end{proposition}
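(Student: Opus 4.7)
The plan is to reduce $G'(A,O(A))$ to a combination of two classical quadratic Gauss sums. Since $A_{p^k,a}$ is cyclic, $O(A)=\{\pm{\rm id}\}$, so every orbit has size $1$ or $2$, and the contribution of a pair orbit $\{x,-x\}$ (with $2x\neq 0$) to $G'(A,O(A))$ is
\[
e(-q(x))\bigl(e((x,x))+e(-(x,x))\bigr) \;=\; e(q(x))+e(-3q(x)),
\]
using $(x,x)=2q(x)$. Summing over orbit representatives and adding the fixed-point contributions (from $\{0\}$ for $p$ odd, and $\{0,2^{k-1}\}$ for $p=2$ with $k\geq 2$), a short bookkeeping shows that the fixed-point terms absorb the overcounting to give
\[
G'(A,O(A)) \;=\; \tfrac{1}{2}\bigl(G'(A)+T_k\bigr), \qquad T_k \;:=\; \sum_{x\in A} e(-3q(x)),
\]
valid when $p$ is odd or when $p=2$ and $k\geq 2$. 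The case $p=2,\,k=1$ is degenerate ($-{\rm id}$ acts trivially on $\mathbb{Z}/2$), giving $G'(A,O(A))=G'(A)=1+i^a$, which a case analysis on $a\bmod 8$ identifies with $\sqrt{2}\,\zeta_8^{-1}\zeta_4^{(a+1)^2/4}$.

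The next step is to evaluate $T_k$ in terms of classical Gauss sums. For $p>3$, and also for $p=2$ with $k\geq 2$, the scalar $-3a$ is a $p$-adic unit, so $T_k=G'(A_{p^k,-3a})$ is a classical Gauss sum whose value is supplied by \cite{B-E-W} (Theorem 1.5.2 for odd $p$, Proposition 1.5.3 for $p=2$). For $p=3$, however, the factor $3$ cancels one power of $3$ from the denominator of $q(x)$: $-3q(x)\equiv -2^{-1}ax^2/p^{k-1}\pmod{\Z}$ depends on $x$ only modulo $p^{k-1}$, so
\[
T_k \;=\; p\cdot G'(A_{p^{k-1},-a}) \quad (k\geq 2), \qquad T_1=p.
\]

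Substituting these evaluations into $\tfrac12(G'(A)+T_k)$ yields each case. For (1): writing $G'(A_{p^k,b}) = \bigl(\tfrac{2b}{p}\bigr)^k\epsilon_{p^k}\sqrt{p^k}$ with $\epsilon_{p^k}\in\{1,\sqrt{-1}\}$ depending on $p^k\bmod 4$, the common factor $\bigl(\tfrac{2a}{p}\bigr)^k\sqrt{|A|}\,\epsilon_{p^k}$ comes out, and the remaining factor is $1+\bigl(\tfrac{-3}{p}\bigr)^k=1+\bigl(\tfrac{p}{3}\bigr)^k$ by quadratic reciprocity. For (2) with $p=3$: substituting $T_k=3G'(A_{3^{k-1},-a})$ and assembling the Legendre symbols $\bigl(\tfrac{-a}{3}\bigr)^k,\,\bigl(\tfrac{a}{3}\bigr)^{k-1}$ with the sign factors $\epsilon_{3^k},\,\epsilon_{3^{k-1}}$ (noting $\sqrt{-1}^{k^2}=\epsilon_{3^k}$) and the small Gauss sum identity $G'(A_{3,\pm a})=1+2\zeta_3^{\mp a}$ (the source of $\zeta_3^{-a}$ in the final formula) gives the claimed closed form. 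For (2) with $p=2,\,k\geq 2$: the relations $\bigl(\tfrac{2}{-3a}\bigr)=-\bigl(\tfrac{2}{a}\bigr)$ and $i^{-3a}=i^a$ produce the overall factor $\tfrac12\bigl(1+(-1)^{k+1}\bigr)$ (zero for even $k$), and the remaining $(1+i^a)$ is recast as $\sqrt{2}\,\zeta_8^{-1}\zeta_4^{(a+1)^2/4}$ as in the $k=1$ subcase.

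The main obstacle is the $p=3$ bookkeeping in (2): the shift $k\mapsto k-1$ mixes $\epsilon_{3^k}$ with $\epsilon_{3^{k-1}}$ and couples $\bigl(\tfrac{-a}{3}\bigr)^k$ with $\bigl(\tfrac{a}{3}\bigr)^{k-1}$ in a way that must be carefully reassembled into the form involving $\zeta_3^{-a}$. Once the identity $T_k=p\cdot G'(A_{p^{k-1}, -a})$ is in place, the remaining arithmetic is mechanical.
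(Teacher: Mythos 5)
Your proposal is correct and takes essentially the same route as the paper's proof: since ${\rm O}(A)=\{\pm{\rm id}\}$, both reduce ${\gaoa}$ to $\tfrac12\bigl(\sum_{x}e(q(x))+\sum_{x}e(-3q(x))\bigr)$ (the paper's $\tfrac12\{G(A(2))+G(A(-6))\}$, resp.\ $\tfrac14\{G(A_{2^{k+1},a})+G(A_{2^{k+1},-3a})\}$ for $p=2$), evaluate via \cite{B-E-W} Theorem 1.5.2/Proposition 1.5.3 and the reciprocity $\left(\frac{-3}{p}\right)=\left(\frac{p}{3}\right)$, and treat $p=3$ through the same degeneration $T_k=3\,G'(A_{3^{k-1},-a})$ (the paper's $A(-6)\simeq A(3)$ with kernel $3^{k-1}A$). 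The only cosmetic deviations are that you handle $p=2$, $k=2$ by the general formula where the paper computes $k\le 2$ directly, and your fixed-point bookkeeping (using $4q(x)\in\Z$ at the fixed points so that $e(-3q(x))=e(q(x))$ there) checks out.
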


\begin{proof}
When $p>2$, we have
\begin{eqnarray*}
{\gaoa} & = & 
1 + \frac{1}{2}\sum_{\begin{subarray}{c} x \in{\Z}/p^k \\ x\ne0 \end{subarray}} 
\zeta_{p^k}^{-2^{-1}ax^2} (\zeta_{p^k}^{ax^2}+\zeta_{p^k}^{-ax^2})  \\ 
& = & \frac{1}{2} \{ G(A(2)) + G(A(-6)) \}. 
\end{eqnarray*}
When $p>3$, the assertion follows from the formula of $G(A(\varepsilon))$ (\cite{B-E-W} Theorem 1.5.2) and 
the quadratic reciprocity $(\frac{-3}{p})=(\frac{p}{3})$. 
When $p=3$, $A(-6)\simeq A(3)$ is degenerate with kernel $3^{k-1}A\simeq{\Z}/3$, so we have  
\begin{eqnarray*}
{\gaoa} 
& = & 
\frac{1}{2}  \left( G(A(-1)) + 3G(A_{3^{k-1},a})  \right)  \\ 
& = & 
\frac{1}{2} \cdot \left( \frac{-a}{3} \right)^k \cdot \sqrt{3^k} \cdot 
\left\{ \sqrt{-1}^{k^2} +  \sqrt{3} \left( \frac{-1}{3} \right)^k \left( \frac{a}{3} \right)\sqrt{-1}^{(k-1)^2} \right\}   \\ 
& = & 
\left( \frac{-a}{3} \right)^k \sqrt{-1}^{k^2} \sqrt{3^k} \cdot \frac{1}{2}\left( 1 + \left( \frac{a}{3} \right)\sqrt{-3} \right). 
\end{eqnarray*}

When $p=2$, $k\geq3$, we have  
\begin{eqnarray*}
{\gaoa} 
& = & 
1 + 1 + \frac{1}{2} \sum_{\begin{subarray}{c} x \in{\Z}/2^k \\ 2x\ne0 \end{subarray}} (\zeta_{2^{k+1}}^{ax^2} + \zeta_{2^{k+1}}^{-3ax^2}) \\ 
& = & 
\frac{1}{4} \{ G(A_{2^{k+1}, a}) + G(A_{2^{k+1}, -3a}) \} \\ 
& = & 
\frac{1}{4} \left(   \left( \frac{2}{a} \right)^{k+1} +  \left( \frac{2}{-3a} \right)^{k+1} \right) \cdot 
(1+\sqrt{-1}^{a}) \cdot \sqrt{2^{k+1}}. 
\end{eqnarray*}
The case $p=2$, $k\leq2$ can be calculated directly. 
\end{proof}

\subsection{$p$-elementary forms}

We first consider the case $p>2$. 
We begin with the hyperbolic plane $U$. 

\begin{lemma}\label{eqn: guou}
Let $p>2$. 
We have $G'(U, {\rm O}(U)) = \left( 1+\left( \frac{p}{3} \right) \right) p$. 
\end{lemma}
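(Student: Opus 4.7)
The plan is to adapt the proof of Lemma~\ref{claim:Gauss U} by inserting the factor $e(-q(x_{\mu}))$ for each orbit. Since $q$ is constant on ${\rm O}(U)$-orbits and the reference vector $x_{\mu}=u_1+(2^{-1}\mu)u_2$ satisfies $q(x_{\mu})=2^{-1}\mu$ (with $2^{-1}\in{\Fp}$), unpacking the definition of $G'$ gives
\begin{equation*}
G'(U,{\rm O}(U)) = 1 + (p-2) + \sum_{\mu\in{\Fpu}}\zeta_p^{-2^{-1}\mu}\,\langle\mu,\mu\rangle_U,
\end{equation*}
where the $1$ is the zero orbit and the $p-2=\langle 0,0\rangle_U-1$ is the contribution from the nonzero isotropic orbit (using the bookkeeping convention of Lemma~\ref{claim:Gauss U}). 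Since $1+(p-2)=p-1=\zeta_p^0\cdot\langle 0,0\rangle_U$, the two exceptional orbits fit back into the single uniform sum
\begin{equation*}
G'(U,{\rm O}(U)) = \sum_{\mu\in{\Fp}}\zeta_p^{-2^{-1}\mu}\,\langle\mu,\mu\rangle_U.
\end{equation*}

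Substituting the formula \eqref{eqn:Gauss sum U-split 2} for $\langle\mu,\mu\rangle_U$ and interchanging the two summations, this becomes
\begin{equation*}
\sum_{\nu\in{\Fpu}}\sum_{\mu\in{\Fp}}\zeta_p^{2^{-1}\mu(\nu+\nu^{-1}-1)}.
\end{equation*}
The inner sum equals $p$ precisely when $\nu+\nu^{-1}=1$ in ${\Fp}$, and vanishes otherwise. The defining equation rewrites as $\nu^2-\nu+1=0$, whose discriminant is $-3$, so the number of solutions $\nu\in{\Fpu}$ equals $1+\left(\frac{-3}{p}\right)$, where the convention $\left(\frac{0}{p}\right)=0$ accommodates the degenerate $p=3$ case (a double root $\nu=2$). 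Quadratic reciprocity then gives $\left(\frac{-3}{p}\right)=\left(\frac{p}{3}\right)$ for every odd prime $p$, so the total equals $\bigl(1+\bigl(\tfrac{p}{3}\bigr)\bigr)p$, as asserted.

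There is no serious obstacle here: the computation is essentially a one-line modification of Lemma~\ref{claim:Gauss U}. The only places that require mild care are the uniform treatment of the exceptional zero and nonzero isotropic orbits via $\langle 0,0\rangle_U$, and the verification that the edge case $p=3$ is absorbed automatically by the Legendre symbol convention.
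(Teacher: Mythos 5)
Your proof is correct and takes essentially the same route as the paper: both reduce $G'(U,{\rm O}(U))$ to the uniform sum $\sum_{\nu\in{\Fpu}}\sum_{\mu\in{\Fp}}\zeta_p^{2^{-1}\mu(\nu+\nu^{-1}-1)}$ and count the solutions of $\nu+\nu^{-1}=1$ using $\left(\frac{-3}{p}\right)=\left(\frac{p}{3}\right)$. The only differences are cosmetic: you spell out the orbit bookkeeping that the paper compresses into ``as in \eqref{eqn: GUOU by def}'', and you absorb the case $p=3$ (double root $\nu=-1$) into the convention $\left(\frac{0}{p}\right)=0$ where the paper treats it as a separate case.
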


\begin{proof}
As in \eqref{eqn: GUOU by def} we have 
\begin{equation*}
G'(U, {\rm O}(U)) = 
\sum_{\mu\in{\Fp}} \zeta_p^{-2^{-1}\mu} \langle \mu, \mu \rangle_U = 
\sum_{\nu\in{\Fpu}}\sum_{\mu\in{\Fp}} \zeta_p^{2^{-1}\mu(-1+\nu+\nu^{-1})}. 
\end{equation*}
When $p\ne3$, the equation $-1+\nu+\nu^{-1}=0$ has (two) solutions in ${\Fpu}$ if and only if $\left( \frac{-3}{p} \right) =1$, 
and we have $\left( \frac{-3}{p} \right) = \left( \frac{p}{3} \right)$. 
When $p=3$, this equation has the unique solution $\nu=-1$. 
\end{proof}

\begin{proposition}\label{p-elementary 2nd kind}
Let $A$ be a quadratic space over ${\Fp}$ with $p>2$ of dimension $m$ and discriminant $d(A)$. 
We write $k=[m/2]$. 
When $p\equiv 1$ mod $3$, choose a solution $\delta\in{\Fp}$ of $\delta^2-\delta+1=0$. 
($\delta$ is a primitive $6$-th root of $1$ in ${\Fp}$.)

1) When $A$ contains an isotropic vector, then 
\begin{equation*}
{\gaoa} = 
\begin{cases}
0, & p\equiv 2 \; (3), \\
2\zeta_{16}^{m^2(p-1)^2} \left( \frac{2\delta}{p} \right)^m \left( \frac{(-1)^k d(A)}{p} \right) \sqrt{|A|}, & p\equiv 1 \; (3), \\ 
(
\sqrt{-1}^{-m}  \left( \frac{d(A)}{3} \right)   \sqrt{|A|}, & p=3.  
\end{cases}
\end{equation*}

(2) When $A$ is the anisotropic plane, then ${\gaoa} = - \left( \frac{p}{3} \right) p$. 
\end{proposition}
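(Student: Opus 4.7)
The plan is to mirror the two-pronged strategy of Propositions \ref{prop:Gauss U-split} and \ref{prop:Gauss anisotropic plane}, adapting each argument to accommodate the twist $e(-q(x))$. Since $p>2$, for $x \in A$ with $(x,x)=\mu/p$ in ${\Q}/{\Z}$ one has $q(x)=(2^{-1}_{{\Fp}}\mu)/p$, so the twisting factor equals $\zeta_p^{-2^{-1}\mu}$.

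For part (1), in the isotropic case I would choose a splitting $A=U\oplus B$ and use the reference vectors $x_{\mu}=u_1+(2^{-1}\mu)u_2\in U_\mu$ together with $(x_\mu,0)\in U_\mu\times B_0$ as in Proposition \ref{prop:Gauss U-split}. The partition $A_\mu=\bigsqcup_\lambda U_\lambda\times B_{\mu-\lambda}$ then yields
\begin{equation*}
G'(A,{\rm O}(A))=\sum_{\mu,\lambda\in{\Fp}}\zeta_p^{-2^{-1}\mu}\,|B_{\mu-\lambda}|\,\langle\mu,\lambda\rangle_U.
\end{equation*}
For even $m=2k$, the formula for $|B_\alpha|$ depends only on whether $\alpha=0$, and the vanishing identity $\sum_\lambda\langle\mu,\lambda\rangle_U=0$ from \eqref{eqn:sum over U =0} collapses the sum to $(|B_0|-|B_1|)G'(U,{\rm O}(U))=p^{k-1}\delta_{A}G'(U,{\rm O}(U))$, where $\delta_A=(\frac{(-1)^kd(A)}{p})$. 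Inserting Lemma \ref{eqn: guou} produces the stated answer.

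For odd $m=2k+1$, substituting $|B_\alpha|=p^{2k-2}+(\frac{\alpha\tilde{d}(A)}{p})p^{k-1}$ isolates
\begin{equation*}
T=\sum_{\mu,\lambda\in{\Fp}}\zeta_p^{-2^{-1}\mu}\left(\tfrac{\mu-\lambda}{p}\right)\langle\mu,\lambda\rangle_U.
\end{equation*}
Applying \eqref{eqn:Gauss sum U-split 2}, swapping the order of summation, and changing variables $\alpha=\mu-\lambda$, the inner $\mu$-sum forces $\nu-1+\nu^{-1}=0$, i.e.\ $\nu^2-\nu+1=0$. For $p>3$ this has roots $\delta,\delta^{-1}\in{\Fpu}$ precisely when $p\equiv 1\:\textrm{mod}\:3$, yielding $T=2p(\frac{-2\delta}{p})g(1)$ with $g(1)$ the classical Legendre Gauss sum; for $p\equiv 2\:\textrm{mod}\:3$ the equation has no roots and $T=0$. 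The case $p=3$ is handled separately using the unique root $\nu=-1$ of the same equation in ${\Fp}$.

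For part (2), I would follow the double-cover $\tilde{A}$ construction of Proposition \ref{prop:Gauss anisotropic plane}: the twist $e(-q(x))$ is constant along each sheet $A_\lambda^{\pm}$, with value $\zeta_p^{-2^{-1}\lambda^2}$ or $\zeta_p^{-2^{-1}\varepsilon\lambda^2}$, and is therefore compatible with the ${\Fpu}$-equivariance of $\varphi$ used there. An equivalent direct route is a Fourier argument: inserting $\mathbf{1}[(y,y)=\mu]=p^{-1}\sum_t\zeta_p^{t((y,y)-\mu)}$ and using the diagonal Gram of the anisotropic plane factors the inner sum over $y$ into two one-variable Gauss sums, after which $G'(A,{\rm O}(A))$ reduces to $1$ plus a constant multiple of $\sum_{t,\mu\ne 0}\zeta_p^{-\mu(2^{-1}+t+(4t)^{-1})}$. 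The equation $4t^2+2t+1=0$ (equivalently $s^2+s+1=0$ under $s=2t$) has two, zero, or one roots according as $p\equiv 1,2,0\:\textrm{mod}\:3$, and combining this with $(\frac{-d(A)}{p})=-1$ (from anisotropy) produces the stated formula $-(\frac{p}{3})p$.

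The main obstacle will be the root-of-unity bookkeeping in the odd-dimensional case of part (1): packaging $g(1)=\zeta_4^{((p-1)/2)^2}\sqrt{p}$ together with the Legendre factors $(\frac{-1}{p})$ and $(\frac{2\delta}{p})$ into the compact form $\zeta_{16}^{m^2(p-1)^2}(\frac{2\delta}{p})^m$ requires a careful case split on $p\:\textrm{mod}\:4$.
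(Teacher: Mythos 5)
Your proposal is correct and, for part (1), essentially reproduces the paper's proof: the same splitting $A=U\oplus B$, the same twisted version of \eqref{eqn:Gauss sum U-split 1}, the even-dimensional collapse to $(|B_0|-|B_1|)\cdot G'(U,{\rm O}(U))=\delta_A p^{k-1}G'(U,{\rm O}(U))$, and in the odd case the same reduction to the quadratic $\nu^2-\nu+1=0$ with the trichotomy $p\equiv 1,2,0 \bmod 3$. Your Legendre-weighted sum $T$ is only a repackaging of the paper's $F'$: since $\sum_{\mu,\lambda}\zeta_p^{-2^{-1}\mu}\langle\mu,\lambda\rangle_U=0$, one has $T=G'(U,{\rm O}(U))+2F'$, and your evaluation $T=2p\left(\frac{-2\delta}{p}\right)g(1)$ agrees with the paper's $F'=p\left(G(A_{p,-2\delta})-1\right)$ combined with Lemma \ref{eqn: guou}; note that the two roots $\delta,\delta^{-1}$ contribute the \emph{same} Legendre constant only because $\delta-1=\delta^2$ and $-\delta=\delta^4$ are squares (using $\delta^3=-1$), a point worth making explicit. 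For part (2), your first route is precisely the paper's: the remark that the twist is constant on each sheet and scales with weight $2$ under the ${\Fpu}$-action is exactly what the paper encodes by replacing $\varphi$ with $\varphi'(\lambda,x)=(x_{\lambda}^{\pm},x)-(x_{\lambda}^{\pm},x_{\lambda}^{\pm})/2$, and you would still need the fiber count at $0$ (parametrizing $(\varphi')^{-1}(0)$ by $x=(x_{\lambda}^{\pm}+y)/2$ with $(y,y)=3(x_{\lambda}^{\pm},x_{\lambda}^{\pm})$, nonempty iff $\left(\frac{p}{3}\right)=-1$). Your alternative Fourier route for (2) is genuinely different from the paper and does check out: the $t=0$ term vanishes, completing the square for $t\ne0$ produces the $t$-independent constant $\left(\frac{d(A)}{p}\right)\left(\frac{-1}{p}\right)p=\left(\frac{-d(A)}{p}\right)p=-p$ by anisotropy, and with $r\in\{2,0,1\}$ roots of $s^2+s+1=0$ one gets ${\gaoa}=p(1-r)=-\left(\frac{p}{3}\right)p$ uniformly --- a small advantage over the paper, since $p=3$ requires no separate fiber analysis; the trade-off is that this computation is special to rank $2$, whereas the covering method is the one the paper remarks extends to general even-dimensional $p$-elementary forms. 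The only remaining work in your plan, as you acknowledge, is the finite case check on $p\bmod 4$ identifying $\left(\frac{\tilde{d}(A)}{p}\right)p^{k-1}\cdot 2p\,G(A_{p,-2\delta})$ with the stated $2\zeta_{16}^{m^2(p-1)^2}\left(\frac{2\delta}{p}\right)^m\left(\frac{(-1)^kd(A)}{p}\right)\sqrt{|A|}$.
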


\begin{proof}
(1) We reuse the notation in the proof of Proposition \ref{prop:Gauss U-split}. 
When $m=2k$ is even, we have as in \eqref{eqn:Gauss sum U-split 1} and \eqref{eqn:sum over U =0}
\begin{eqnarray*}
{\gaoa} 
& = & 
\sum_{\mu, \lambda \in {\Fp}} \zeta_{p}^{-2^{-1}\mu} \cdot |B_{\mu-\lambda}| \cdot \langle \mu, \lambda \rangle_U \\  
& = & 
|B_0|\cdot G'(U, {\rm O}(U)) + 
|B_1|\cdot \sum_{\mu\ne\lambda }\zeta_{p}^{-2^{-1}\mu}\cdot\langle \mu, \lambda \rangle_{U} \\ 
& = & 
(|B_0|-|B_1|) \cdot G'(U, {\rm O}(U)) \\ 
& = & 
\delta_A \cdot p^{k-1} \cdot G'(U, {\rm O}(U)). 
\end{eqnarray*}
In the third equality we used 
$\sum_{\mu, \lambda}\zeta_{p}^{-2^{-1}\mu}\langle \mu, \lambda \rangle_{U}=0$.  
The proposition follows from Lemma \ref{eqn: guou}. 

Next let $m=2k+1$ be odd. 
We set 
\begin{equation*}\label{eqn:gauss sum U-split 1}
F' = \sum_{(\mu, \lambda)\in S} \zeta_{p}^{-2^{-1}\mu} \langle \mu, \lambda \rangle_U. 
\end{equation*}
Then ${\gaoa}$ is given by   
\begin{eqnarray*}\label{eqn:gauss sum U-split 0}
& & 
|B_0|\cdot G'(U, {\rm O}(U)) + |B_1| \cdot F' - |B_{\varepsilon}|\cdot (G'(U, {\rm O}(U))+F') \\ 
& = & 
\left( \frac{\tilde{d}(A)}{p} \right) p^{k-1} \cdot G'(U, {\rm O}(U)) + 2 \left( \frac{\tilde{d}(A)}{p} \right) p^{k-1} \cdot F'. 
\end{eqnarray*}
So it suffices to calculate $F'$.   
We have 
\begin{eqnarray*}
2F'  
& = & 
\sum_{\mu\in{\Fp}}  \sum_{\alpha\in{\Fpu}}  \sum_{\nu\in{\Fpu}}
\zeta_{p}^{2^{-1}\mu(-1+\nu+\nu^{-1})-2^{-1}\alpha^2\nu^{-1}} \\ 
& = & 
\sum_{\nu\in{\Fpu}} \left( \sum_{\mu\in{\Fp}} \zeta_{p}^{2^{-1}\mu(-1+\nu+\nu^{-1})} \right) \cdot (G(A_{p, -2\nu})-1). 
\end{eqnarray*}
When $p\equiv 2 \; (3)$, we have $\nu+\nu^{-1}\ne1$ for any $\nu\in{\Fpu}$ so that $F'=0$. 
When $p\equiv 1 \; (3)$, we have two solutions $\delta, \delta^{-1}$ of $\nu+\nu^{-1}=1$. 
Then $F'=p(G(A_{p,-2\delta})-1)$.  
Finally, when $p=3$, $\nu+\nu^{-1}=1$ has the unique solution $\nu=-1$ and thus 
$2F'=3(G(A_{3,-1})-1)$. 
It is now straightforward to finish the calculation of ${\gaoa}$. 

(2) We keep the notation in the proof of Proposition \ref{prop:Gauss anisotropic plane}. 
Instead of $\varphi$, we consider the map 
\begin{equation*}
\varphi' : \tilde{A} \to {\Fp}, \quad A_{\lambda}^{\pm}\ni (\lambda, x)\mapsto (x_{\lambda}^{\pm}, x) - (x_{\lambda}^{\pm}, x_{\lambda}^{\pm})/2. 
\end{equation*}
Then 
\begin{equation*}
2{\gaoa} = 2 + \sum_{(\lambda, x)\in\tilde{A}}\zeta_{p}^{\varphi'(\lambda, x)}. 
\end{equation*}
As in Claim \ref{claim:constant fiber}, $\varphi'$ has constant fiber cardinality over ${\Fpu}\subset{\Fp}$. 
Hence 
\begin{equation*}
2{\gaoa}  =  
2 + |(\varphi')^{-1}(0)| - |(\varphi')^{-1}(1)|  
= -2p + \frac{p}{p-1}|(\varphi')^{-1}(0)|. 
\end{equation*}
If $\varphi'(\lambda, x)=0$, we can write $x=(x_{\lambda}^{\pm}+y)/2$ for some $y\in(x_{\lambda}^{\pm})^{\perp}$. 
Hence 
\begin{equation*}
(\varphi')^{-1}(0)\cap A_{\lambda}^{\pm} \simeq  
\{ \: y\in(x_{\lambda}^{\pm})^{\perp} \: | \: (y, y) = 3(x_{\lambda}^{\pm}, x_{\lambda}^{\pm}) \: \}. 
\end{equation*}
When $p>3$, this set is non-empty (consisting of two points) if and only if 
$3\in d(A)\cdot ({\Fpu})^2$, namely $-3$ is nonsquare. 
It follows that 
\begin{equation*}
|(\varphi')^{-1}(0)| = \left( 1 - \left( \frac{p}{3} \right) \right) \cdot 2(p-1). 
\end{equation*}
When $p=3$, $(\varphi')^{-1}(0)\cap A_{\lambda}^{\pm}$ consists of one point 
and hence $|(\varphi')^{-1}(0)|=2(p-1)$. 
\end{proof}

\begin{proposition}
Let $A$ be a 2-elementary form of dimension $m>1$. 
Then 
\begin{equation*}
{\gaoa} = 
\begin{cases} 
0, & A \; \textrm{contains} \; U \; \textrm{or} \;  A=(A_{2,a})^2, \\
\sqrt{|A|}, & A=V, \: A_{2,1}\oplus A_{2,-1}, \; (A_{2,a})^4, \\
2(1+\sqrt{-1}^{-a}), & A=(A_{2,a})^3.  
\end{cases}
\end{equation*}
\end{proposition}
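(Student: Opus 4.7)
The plan is to reproduce the strategy of Propositions \ref{isotropic 2-elementary} and \ref{anisotropic 2-elementary} with the extra character $e(-q(x))$ inserted. For any $x\in A_\mu$ we have $q(x)=\mu/2$ (well-defined in $\tfrac{1}{4}\Z/\Z$), and the same argument as for \eqref{eqn:2-elemen proof 1} gives the compact formula
\begin{equation*}
G'(A, {\rm O}(A)) = \sum_{\mu\in 2^{-1}\Z/2\Z} e(-\mu/2) \sum_{x\in A_\mu} e((x_\mu, x)).
\end{equation*}
The contributions of the exceptional fixed points $0$ and $x_A$ remain consistent with this formula because $q(x_\lambda)=q(x_A)$ and $(x_\lambda, x_A)=(x_A, x_A)$ for $\lambda=(x_A,x_A)$.

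For $A$ containing $U$, I would import the partitions of $A_\mu$ from the proof of Proposition \ref{isotropic 2-elementary} (using $A=U\oplus B$ in the special case and $A=A_{2,1}\oplus A_{2,-1}\oplus C$ in the non-special case). Identities \eqref{eqn: isotropic 2-elementary1} and \eqref{eqn: isotropic 2-elementary2} give the inner sum values; in particular, $\sum_{x\in A_0}e((x_0,x))=\sum_{x\in A_1}e((x_1,x))$ and $\sum_{x\in A_{1/2}}e((x_{1/2},x))=\sum_{x\in A_{-1/2}}e((x_{-1/2},x))$. The $\mu=0,1$ weights $e(-\mu/2)=1,-1$ cancel pairwise, and the $\mu=\pm 1/2$ weights $e(\mp 1/4)=\mp\sqrt{-1}$ cancel pairwise, so $G'(A,{\rm O}(A))=0$ in both the special and non-special subcases.

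The remaining five cases I would handle by direct calculation, using Proposition \ref{lem:OA-equivalen 2-elementary} to list orbits. For $A=A_{2,1}\oplus A_{2,-1}$, the two summands are non-isometric, ${\rm O}(A)$ is trivial, and the product formula gives $G'(A)=G'(A_{2,1})G'(A_{2,-1})=(1+\sqrt{-1})(1-\sqrt{-1})=2$. For $A=V$, the three non-zero vectors form one orbit of norm $1$ and a short computation yields $2$. For $A=(A_{2,a})^k$ I would identify $A$ with $\mathbb{F}_2^k$ carrying the pairing $(x,y)=(a/2)(x\cdot y)$, with characteristic element $x_A=(1,\ldots,1)$, so that $O(A)$-orbits are indexed by the Hamming weight $w=1,\ldots,k-1$, together with $\{0\}$ and $\{x_A\}$. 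Routine counts of the pairings between a weight-$w$ reference vector and the other weight-$w$ vectors give the inner sums, and assembling them with the phases $e(-aw/4)$ and $e(-a(x_A,x_A)/2)$ yields $0$ for $k=2$, $2(1+\sqrt{-1}^{-a})$ for $k=3$, and $4$ for $k=4$.

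The main obstacle, as usual in the dyadic case, is keeping careful track of norms as elements of $\Q/2\Z$ (not just $\Q/\Z$) when the characteristic element is non-trivial. In particular, $(x_A,x_A)$ and $q(x_A)$ must be read in the refined pairing: for $(A_{2,a})^3$ one has $(x_A,x_A)=3a/2$ and $q(x_A)=3a/4$, and it is precisely this value that produces the $a$-dependent phase $\sqrt{-1}^{-a}$ in the answer. The $U$-containing case was easy because the cancellations were driven entirely by the coincidence of inner sums across $\mu\mapsto\mu+1$ and $\mu\mapsto -\mu$; in the small anisotropic cases these symmetries fail, and the nonvanishing values arise from the surviving phases at $x_A$ and at the non-self-paired orbits.
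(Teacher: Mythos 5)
Your proposal is correct and follows essentially the same route as the paper: for $A\supset U$ the paper uses exactly your weighted version of \eqref{eqn:2-elemen proof 1} (with weights $\sqrt{-1}^{-2\mu}$) and the identities \eqref{eqn: isotropic 2-elementary1}--\eqref{eqn: isotropic 2-elementary2} to get the pairwise cancellation, and it dismisses the five anisotropic cases with ``can be checked directly,'' which your orbit-by-orbit counts carry out correctly (I verified they yield $0$, $2$, $2$, $2(1+\sqrt{-1}^{-a})$, $4$ as claimed). One cosmetic slip: the phase at the characteristic element is $e(-q(x_A))=e(-(x_A,x_A)/2)$, not $e(-a(x_A,x_A)/2)$ --- your own later value $q(x_A)=3a/4$ for $(A_{2,a})^3$ is the correct one, and note the $\sqrt{-1}^{-a}$ in that case comes equally from the weight-$1$ orbit and the $x_A$ orbit.
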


\begin{proof}
The exceptional cases can be checked directly. 
Let $A$ contain $U$. 
We have as in \eqref{eqn:2-elemen proof 1}
\begin{equation*}
{\gaoa} = \sum_{\mu\in2^{-1}{\Z}/2{\Z}} \sqrt{-1}^{-2\mu} \sum_{x\in A_{\mu}} e((x_{\mu}, x)). 
\end{equation*} 
By \eqref{eqn: isotropic 2-elementary1} and \eqref{eqn: isotropic 2-elementary2} 
we obtain ${\gaoa}=0$. 
\end{proof}

\subsection{Product formula}

\begin{proposition}
Let $p\geq2$ and $A$ be the direct sum $A=C\oplus B$ where $C=A_{p^k, a}$ with $k>1$ and $B$ is $p$-elementary. 
Then 
\begin{equation*}
{\gaoa} = G'(C, {\rm O}(C)) \cdot G'(B, {\rm O}(B)).  
\end{equation*}
\end{proposition}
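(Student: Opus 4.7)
The strategy is to mirror the proof of Proposition \ref{product formula general quasi-cyclic p>2} (the $G$-version), carrying along the additional factor $e(-q(y))$. The key structural fact is that $q(xe+w) = q(xe) + q(w)$ on the orthogonal sum $A = C \oplus B$, so $e(-q(\cdot))$ splits multiplicatively across the two factors. First I would use $O(A) = \{\pm 1\} \times \Gamma$ with $\Gamma = B \rtimes O(B)$ (Proposition \ref{prop:structure of OA} for $p > 2$, \eqref{eqn:structure of OA p=2} for $p = 2$, $k \geq 4$) together with the stratification $A = A_0 \sqcup A_1$ of \S\ref{ssec: product formula p>2} to write $G'(A, O(A)) = S_0 + S_1$, and analogously $G'(C, O(C)) = S_0^{(C)} + S_1^{(C)}$ for the contributions from the unit and non-unit parts of $C$.

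For the $A_1$ contribution I would invoke Lemma \ref{G-action on A_i p>2}(2) and the factorization \eqref{eqn: split of pairing at A1 stratum} of $\langle[xe+w], [xe+w]\rangle_A$. The factor $e(-q(xe))$ is invariant under the shifts $x \mapsto x + p^{k-1}(\mathbb{Z}/p)$ on $pC$, since the correction $q((x + p^{k-1}z)e) - q(xe)$ lies in $\mathbb{Z}$ for $x \in pC$ and $k \geq 2$. The same bookkeeping as in the proof of Proposition \ref{product formula general quasi-cyclic p>2} then produces $S_1 = S_1^{(C)} \cdot G'(B, O(B))$.

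The heart of the argument is the $A_0$ contribution. Adapting \eqref{eqn:(3.10)} to the $G'$ setting, the bijection $v \mapsto xv$ on $B$ identifies $\sum_v e(-2^{-1}x^2(v,v)) = \overline{G'(B)}$ (this equals the $G(B(-2))$ of Proposition \ref{product formula general quasi-cyclic p>2}). Multiplying the resulting $\langle[xe], [xe]\rangle_A$ by $e(-q(xe)) = \zeta_{p^k}^{-2^{-1}ax^2}$ and summing over $[xe] \in (\mathbb{Z}/p^k)^\times e/\pm$ gives
\begin{equation*}
S_0 = \frac{1}{2} \bigl( T_1 \overline{G'(B)} + T_2 G'(B) \bigr), \qquad T_i = \sum_{x \in (\mathbb{Z}/p^k)^\times} \zeta_{p^k}^{c_i a x^2},
\end{equation*}
with $c_1 = 2^{-1}$ and $c_2 = -3 \cdot 2^{-1}$. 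A parallel computation yields $S_0^{(C)} = \frac{1}{2}(T_1 + T_2)$, so the desired identity $S_0 = S_0^{(C)} \cdot G'(B, O(B))$ reduces to $T_1 (\overline{G'(B)} - G'(B)) = 0$. Since $c_1 a = 2^{-1} a \in \mathbb{Z}_p^\times$, Lemma \ref{redundant Gauss sum =0} (and its $2$-adic analog for $k \geq 4$) gives $T_1 = 0$. Assembling, $G'(A, O(A)) = (S_0^{(C)} + S_1^{(C)}) \cdot G'(B, O(B)) = G'(C, O(C)) \cdot G'(B, O(B))$.

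The main obstacle is the asymmetry in $S_0$ when $p = 3$: then $c_2 a = -3 \cdot 2^{-1} a$ becomes divisible by $3$, so $T_2$ does not vanish and one cannot simply argue $S_0 = 0$ as in the $G$-version. The argument survives because the term $T_2 G'(B)$ attaches to $G'(B)$, not $\overline{G'(B)}$, and therefore matches the corresponding term inside $S_0^{(C)} G'(B)$ automatically, with no reality hypothesis on $G'(B)$. The case $p = 2$ with $k \in \{2, 3\}$ sits outside the $2$-adic form of Lemma \ref{redundant Gauss sum =0} and will likely require separate direct verification paralleling Propositions \ref{prop:(p,k)=(2,3)} and \ref{prop:(p,k)=(2,2)}.
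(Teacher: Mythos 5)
Your skeleton --- the stratification $A=A_0\sqcup A_1$, the structure ${\OA}=\{\pm1\}\times\Gamma$, the factorization of the $A_1$-contribution together with the shift-invariance of $e(-q(xe))$ on $pC$, and the identifications $G(B(-2))=\overline{G'(B)}$, $G(B(2))=G'(B)$ --- is exactly the paper's, and your computations $S_1=S_1^{(C)}\cdot G'(B,{\OB})$, $S_0=\frac{1}{2}\bigl(T_1\overline{G'(B)}+T_2G'(B)\bigr)$ and $S_0^{(C)}=\frac{1}{2}(T_1+T_2)$ are correct. The genuine gap is the reduction step: asserting that $S_0=S_0^{(C)}\cdot G'(B,{\OB})$ ``reduces to $T_1(\overline{G'(B)}-G'(B))=0$'' silently replaces the \emph{equivariant} Gauss sum $G'(B,{\OB})$ by the \emph{classical} one $G'(B)$. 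These are different invariants: $|G'(B)|=\sqrt{|B|}$ always by Milgram's formula, whereas $G'(B,{\OB})$ can vanish (Proposition \ref{p-elementary 2nd kind}). Correctly,
\begin{equation*}
2\bigl(S_0-S_0^{(C)}\,G'(B,{\OB})\bigr)=T_1\bigl(\overline{G'(B)}-G'(B,{\OB})\bigr)+T_2\bigl(G'(B)-G'(B,{\OB})\bigr),
\end{equation*}
so killing $T_1$ settles nothing about the $T_2$-term. For $p>3$ (and for $p=2$, $k\geq4$) your conclusion survives only because $T_2=0$ as well, by the same Lemma \ref{redundant Gauss sum =0} applied to the unit $-3\cdot2^{-1}a$ --- a step your write-up needs but never takes.

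Consequently your final paragraph, which is meant to dispose of $p=3$, is wrong. At $(p,k)=(3,2)$ one has $T_1=0$ but $T_2=6\zeta_3^a\neq0$, so what you need is precisely $G'(B)=G'(B,{\OB})$ for $3$-elementary $B$; nothing ``matches automatically'', because the factor in the proposition is $G'(B,{\OB})$, not $G'(B)$. (For $p=3$, $k\geq3$, the vanishing $T_2=3\sum_{x\in({\Z}/3^{k-1})^{\times}}\zeta_{3^{k-1}}^{-2^{-1}ax^2}=0$ also needs its own line, since the exponent is no longer a unit multiple of $x^2$ modulo $3^k$.) The case $(p,k)=(3,2)$ is exactly where the paper's proof does genuine work: there neither stratum factors by itself --- the $A_1$-stratum contributes $3\cdot G'(B,{\OB})$ even though $G'(C,{\rm O}(C))=-3\zeta_3^{-a}$, while the $A_0$-stratum leaves the residue $3\zeta_3^a\,G(B(2))$ --- and the paper recombines the two using the explicit evaluation of $G'(B,{\OB})$ from Proposition \ref{p-elementary 2nd kind} when $B$ contains $U$, where the Milgram value $G(B(2))=G'(B)$ happens to coincide with $G'(B,{\OB})$; this is a computed coincidence, not a formal identity, and it genuinely fails for anisotropic $B$ (for instance $G'(B,{\OB})=0$ for the anisotropic plane over $\mathbb{F}_3$), which is why the paper treats anisotropic $B$ by a separate argument rather than by your mechanism. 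Your deferral of $p=2$, $k\in\{2,3\}$ to direct verification is consistent with the paper, which at $k=2$ shows ${\gaoa}=0=G'(C,{\rm O}(C))$ and at $k=3$ uses $\zeta_{16}^{a}+\zeta_{16}^{5a}+\zeta_{16}^{9a}+\zeta_{16}^{13a}=0$; but as written, your proof is missing the key case $p=3$, $k=2$.
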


\begin{proof}
We first consider the case $p>2$. 
We use the notation in \S \ref{ssec: product formula p>2}.  
For $[xe]\in {\OA}\backslash A_0$, $x\in({\Z}/p^k)^{\times}$, we have by \eqref{eqn:(3.10)}  
\begin{equation*}
e(-q(xe)) \langle [xe], [xe] \rangle_{A} = 
\zeta_{p^k}^{2^{-1}ax^2}G(B(-2)) + \zeta_{p^k}^{-2^{-1}\cdot3ax^2}G(B(2)). 
\end{equation*}
If $p>3$, we obtain   
\begin{equation*}
\sum_{[y]\in {\OA}\backslash A_0} e(-q(y)) \langle [y], [y] \rangle_A = 0. 
\end{equation*}
by Lemma \ref{redundant Gauss sum =0}. 
When $p=3$, we have 
\begin{equation*}
\sum_{[y]\in {\OA}\backslash A_0} e(-q(y)) \langle [y], [y] \rangle_A =
G(B(2))\cdot \frac{3}{2} \sum_{x\in({\Z}/3^{k-1})^{\times}}\zeta_{3^{k-1}}^{-2^{-1}ax^2}. 
\end{equation*}
This is equal to $0$ when $k>2$, and to $G(B(2))\cdot 3 \zeta_3^a$ when $k=2$. 
For the stratum $A_1$ we have by \eqref{eqn: split of pairing at A1 stratum} 
\begin{eqnarray*}
& & 
\sum_{[y]\in {\OA}\backslash A_1}e(-q(y)) \langle [y], [y] \rangle_A \\ 
& = &  
\left( \sum_{[xe]\in pC/-1}e(-q(xe)) \langle [xe], [xe] \rangle_C \right) \cdot 
\left( \sum_{[w]\in {\rm O}(B)\backslash B}e(-q(w)) \langle [w], [w] \rangle_B \right). 
\end{eqnarray*}
The second term is $G'(B, {\rm O}(B))$. 
When $(p, k)\ne (3, 2)$, the first term is equal to $G'(C, {\rm O}(C))$ by Lemma \ref{redundant Gauss sum =0}.  
Let $(p, k)=(3, 2)$. 
Then the first term is equals to $3$. 
If $B$ contains $U$, we may use Proposition \ref{p-elementary 2nd kind} to obtain 
\begin{eqnarray*}
{\gaoa} 
&=& 3 \sqrt{-1}^{-m} \left( \frac{d(B)}{3} \right) \sqrt{|B|} \cdot (1+\zeta_3^a) \\ 
&=& G'(C, {\rm O}(C)) \cdot G'(B, {\rm O}(B)). 
\end{eqnarray*}
If $B$ is anisotropic, the decomposition $A=C\oplus B$ is canonical and we can use Lemma \ref{product decomposition Gauss sum}. 

Next let $p=2$. 
When $k\geq4$, the above calculation is still valid. 
When $k=3$, we have as in the proof of Proposition \ref{prop:(p,k)=(2,3)}
\begin{equation*}
\sum_{[x]\in {\OA}\backslash A_1} e(-q(x)) \langle [x], [x] \rangle = G'(C, {\rm O}(C)) \cdot G'(B, {\rm O}(B)), 
\end{equation*}
\begin{equation*}
\sum_{[x]\in {\OA}\backslash A_{0+}} e(-q(x)) \langle [x], [x] \rangle = 
(\zeta_{16}^{a}+\zeta_{16}^{5a}+\zeta_{16}^{9a}+\zeta_{16}^{13a})\sum_{v\in B_+}(-1)^{(v, v)} = 0,  
\end{equation*}
and similarly 
$\sum_{{\OA}\backslash A_{0-}} e(-q(x)) \langle [x], [x] \rangle = 0$. 
When $k=2$, we have $G'(C, {\rm O}(C))=0$ by Proposition \ref{prop: cyclic 2nd kind}. 
We shall show that ${\gaoa}=0$. 
As in the proof of Proposition \ref{prop:(p,k)=(2,2)}, 
${\gaoa}$ can be reduced  to 
\begin{equation*}
{\GAOA} = \sum_{\stackrel{[x]\in{\OA}\backslash A}{2x=0}} e(-q(x)) \langle [x], [x] \rangle_A, 
\end{equation*}
and we may assume that $B'\ne0$. 
When $x_B$ is nonzero and isotropic, \eqref{eqn: prod (p,k)=(2,2) 2} and \eqref{eqn: prod (p,k)=(2,2) 3} show that 
the contribution from orbits other than $0$, $x_B$, $2e$ and $x_B+2e$ cancels to $0$. 
The contribution from those elements also amounts to $0$, so we have ${\gaoa}=0$. 
The case of other $x_B$ is similar. 
\end{proof}

\begin{remark}
We also checked that ${\gaoa}=0$ for 
$A=A_{2^k,a}\oplus A_{4,b}$ and $A=A_{2^k,a}\oplus A_{4,b}\oplus A_{2,c}$ where $k\geq2$. 
Since $G'(A_{4,b}, {\rm O}(A_{4,b}))=0$ by Proposition \ref{prop: cyclic 2nd kind}, 
the product formula also holds in this case. 
\end{remark}

\section{Modular forms for the Weil representation}\label{sec:dim formula}

In this section we explain that the equivariant Gauss sums ${\GAOA}$ and ${\gaoa}$ appear 
in the dimension formula for certain vector-valued modular forms. 
This was our original motivation to study them. 

Let $A=(A, q)$ be a finite quadratic form. 
We write $\sigma(A)\in{\Z}/8$ for its signature. 
We recall the Weil representation associated to $A$ (see \cite{Bo}). 
Let ${\Mp}$ be the metaplectic double cover of ${\SL}$. 
Its elements are pairs $(M, \phi(\tau))$ where 
$M=\begin{pmatrix}a & b \\ c & d \end{pmatrix}\in {\SL}$ 
and $\phi(\tau)$ is a holomorphic function on the upper half plane such that $\phi(\tau)^2=c\tau+d$. 
The group ${\Mp}$ is generated by the elements  
\begin{equation*}
T = \left( \begin{pmatrix}1&1\\ 0&1\end{pmatrix}, 1 \right), \quad  
S = \left( \begin{pmatrix}0&-1\\ 1&0\end{pmatrix}, \sqrt{\tau} \right). 
\end{equation*}
Its center is cyclic of order $4$ generated by 
\begin{equation*}
Z = S^2 = \left( \begin{pmatrix}-1&0\\ 0&-1\end{pmatrix}, \sqrt{-1} \right). 
\end{equation*}
Let ${\C}A$ be the group algebra over $A$. 
We denote by $\mathbf{e}_x\in {\C}A$ the standard basis vector corresponding to $x\in A$. 
The Weil representation $\rho_A$ is the unitary representation of ${\Mp}$ on ${\C}A$ defined by 
\begin{eqnarray*}
\rho_A(T)(\mathbf{e}_x) & = & e(q(x)){\e}_x, \\  
\rho_A(S)(\mathbf{e}_x) & = & \frac{\zeta_{8}^{-\sigma(A)}}{\sqrt{|A|}} \sum_{y\in A}e(-(x, y)){\e}_y. 
\end{eqnarray*}
The orthogonal group ${\OA}={\rm O}(A, q)$ acts on ${\C}A$ by permuting the basis vectors ${\e}_x$, 
namely $\gamma({\e}_x)={\e}_{\gamma x}$ for $\gamma\in{\OA}$. 
This action commutes with $\rho_A$:  
\begin{equation*}\label{eqn:Weil rep and O(A)action}
\rho_A(T)\circ\gamma = \gamma\circ\rho_A(T), \qquad \rho_A(S)\circ\gamma = \gamma\circ\rho_A(S). 
\end{equation*}

Let  $l\in\frac{1}{2}{\Z}$. 
A ${\C}A$-valued holomorphic function $f(\tau)$ on the upper half plane is called a modular form of type $\rho_A$ and weight $l$ 
if it satisfies 
\begin{equation*}
f(M\tau) = \rho_A(M, \phi) \phi(\tau)^{2l} f(\tau), \quad (M, \phi)\in{\Mp}, 
\end{equation*}
and is holomorphic at the cusp. 
We write $M_l(\rho_A)$ for the space of such modular forms. 
The group ${\OA}$ acts on $M_l(\rho_A)$ naturally through its action on ${\C}A$. 
Our purpose is to compute the dimension of the invariant part $M_l(\rho_A)^{{\OA}}$. 
This is related to constructing modular forms for the full orthogonal group of an even lattice by means of lifting \cite{Bo}. 
Let $({\C}A)^{{\OA}}$ be the ${\OA}$-invariant subspace of ${\C}A$. 
As its basis we can take 
\begin{equation*}
v_{[x]} = \sum_{y\in{\OA}x}{\e}_y, \qquad [x]\in{\OA}\backslash A. 
\end{equation*}
Since the $\rho_A$-action commutes with the ${\OA}$-action, it preserves $({\C}A)^{{\OA}}$. 
We write $\rho_A^{inv}$ for the restriction of $\rho_A$ on $({\C}A)^{{\OA}}$. 
Then 
\begin{equation*}
M_l(\rho_A)^{{\OA}} = M_l(\rho_A^{inv}), 
\end{equation*}
so the problem is to compute the dimension of the space of $\rho_A^{inv}$-valued modular forms. 
We write 
\begin{equation*}
d_A^{inv} = {\dim}({\C}A)^{{\OA}} = |{\OA}\backslash A|. 
\end{equation*}
If $\lambda\in{\Q}/{\Z}$, we denote by $\alpha(\lambda)$ its representative in $[0, 1)$. 
We set  
\begin{equation*}
\alpha(A) = \sum_{[x]\in {\OA}\backslash A} \alpha(q(x)). 
\end{equation*}

\begin{proposition}
Let $l\equiv\sigma(A)/2$ mod $2{\Z}$ with $l\geq 2$. 
Then 
\begin{eqnarray*}\label{eqn:modular form dim}
{\dim}(M_l(\rho_A^{inv})) 
& = & 
 \frac{d_A^{inv}(l+5)}{12} - \alpha(A) + 
 (-1)^{(2l-\sigma(A))/4} \frac{{\GAOA}}{4\sqrt{|A|}}     \\ 
&  & +  \frac{2}{3\sqrt{3}\sqrt{|A|}} {\rm Re} \{ \zeta_{24}^{4l+2-3\sigma(A)} \overline{{\gaoa}} \}.  
\end{eqnarray*}
When $l\not\equiv\sigma(A)/2$ mod $2{\Z}$, we have $M_l(\rho_A^{inv})=\{ 0 \}$. 
\end{proposition}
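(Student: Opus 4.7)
The strategy is to combine a Selberg--Riemann--Roch type dimension formula for $M_l(\rho)$ on $\Mp$ with an explicit evaluation of the traces of $\rho_A^{\rm inv}$ at the elliptic generators $S$ and $ST$. The bridge between the two sides is the averaging identity
\begin{equation*}
\tr(\rho_A^{\rm inv}(g)) \; = \; \frac{1}{|\OA|}\sum_{\gamma\in\OA}\tr(\rho_A(g)\circ\gamma),
\end{equation*}
which is valid because the $\OA$-action on ${\C}A$ commutes with $\rho_A$; under this identity the elliptic traces become the equivariant Gauss sums.

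For the parity statement, squaring the formula for $\rho_A(S)$ gives $\rho_A(Z)\colon \e_x\mapsto i^{-\sigma(A)}\e_{-x}$. Since $-{\rm id}\in\OA$, the involution $\e_x\mapsto\e_{-x}$ is the identity on $({\C}A)^{\OA}$, so $\rho_A^{\rm inv}(Z)$ acts as the scalar $i^{-\sigma(A)}$. The modular transformation law at the generator $(−I,\sqrt{-1})$ forces this scalar to equal $(\sqrt{-1})^{-2l}=i^{-2l}$, i.e.\ $2l\equiv\sigma(A)\bmod 4$; otherwise $M_l(\rho_A^{\rm inv})=\{0\}$.

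For the trace computations, $\rho_A^{\rm inv}(T)$ is diagonal in the basis $\{v_{[x]}\}_{[x]\in\OA\backslash A}$ with eigenvalue $e(q(x))$, so the cusp correction in the dimension formula is precisely $\sum_{[x]}\alpha(q(x))=\alpha(A)$. For $g=S$, the diagonal $\e_x$-coefficient of $\rho_A(S)\gamma$ equals $\zeta_8^{-\sigma(A)}|A|^{-1/2}e(-(\gamma x,x))$; summing over $\gamma\in\OA$ and reorganising by orbits (using $-{\rm id}\in\OA$ to replace $e(-(y,x))$ by $e((y,x))$) reproduces exactly the bookkeeping used to define $\GAOA$, giving
\begin{equation*}
\tr\rho_A^{\rm inv}(S) \; = \; \frac{\zeta_8^{-\sigma(A)}}{\sqrt{|A|}}\,\GAOA.
\end{equation*}
The identical argument applied to $ST$, combined with $\rho_A(T)\e_x=e(q(x))\e_x$ and the $\OA$-invariance of $q$, yields
\begin{equation*}
\tr\rho_A^{\rm inv}(ST) \; = \; \frac{\zeta_8^{-\sigma(A)}}{\sqrt{|A|}}\,\overline{\gaoa}.
\end{equation*}

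Finally, substitute these values into the general Selberg--Skoruppa--Bruinier dimension formula for finite-image unitary representations of $\Mp$, which has the shape
\begin{equation*}
\dim M_l(\rho) \; = \; \frac{d(l+5)}{12} - \sum_j\alpha(\lambda_j) + \frac{1}{4}{\rm Re}(i^l\tr\rho(S)) + \frac{2}{3\sqrt{3}}{\rm Re}(\zeta_{24}^{4l+2}\tr\rho(ST)),
\end{equation*}
with $\{e^{2\pi i\lambda_j}\}$ the $T$-spectrum, $\lambda_j\in[0,1)$. The phases then combine as $i^l\zeta_8^{-\sigma(A)}=\zeta_8^{2l-\sigma(A)}=(-1)^{(2l-\sigma(A))/4}$ (real thanks to the parity condition, so no ${\rm Re}$ is needed on the $G$-term) and $\zeta_{24}^{4l+2}\zeta_8^{-\sigma(A)}=\zeta_{24}^{4l+2-3\sigma(A)}$, yielding the formula in the statement. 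The principal obstacle is pinning down the precise form and phase normalisation of the Selberg formula on $\Mp$, which must be taken from the literature (Skoruppa, Bruinier--Kuss); once that is matched, everything reduces to the trace computations above and elementary manipulation of $8$th and $24$th roots of unity.
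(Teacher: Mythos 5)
Your proposal is correct and follows essentially the same route as the paper: the parity condition via the central element $Z$, Skoruppa's dimension formula quoted from the literature (the paper cites Eholzer--Skoruppa with exactly the phases $\zeta_8^{2l}=i^l$ and $\zeta_{12}^{2l+1}=\zeta_{24}^{4l+2}$ you use), and the identification $\tr\rho_A^{inv}(S)=\zeta_8^{-\sigma(A)}|A|^{-1/2}\,{\GAOA}$ and $\tr\rho_A^{inv}(ST)=\zeta_8^{-\sigma(A)}|A|^{-1/2}\,\overline{{\gaoa}}$, followed by the same root-of-unity bookkeeping. The only cosmetic difference is that you evaluate the traces through the averaging projector $\frac{1}{|{\OA}|}\sum_{\gamma\in{\OA}}\tr(\rho_A(g)\circ\gamma)$ (which requires the orbit--stabilizer cancellation you gesture at), whereas the paper computes $\rho_A(S)$ and $\rho_A(ST)$ directly in the orbit basis $v_{[x]}$ and reads off the diagonal entries $\langle [x],[x]\rangle_A$; both are valid and amount to the same computation.
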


\begin{proof}
Since $\rho_A(Z)$ maps ${\e}_x$ to $\sqrt{-1}^{-\sigma(A)}{\e}_{-x}$, we have 
\begin{equation*}
\rho_A(Z)({\e}_x + {\e}_{-x}) = \sqrt{-1}^{-\sigma(A)} ({\e}_x + {\e}_{-x}). 
\end{equation*}
The invariance under $Z$ means $\sqrt{-1}^{2l-\sigma(A)}=1$, 
so we must have $2l-\sigma(A)\in 4{\Z}$. 
In general, for a representation $\rho$ of ${\Mp}$ such that $\rho(Z)$ acts by the scalar multiplication by $\sqrt{-1}^{2l}$, 
a general dimension formula for $\rho$-valued modular forms of weight $l$ is given by Skoruppa (see, e.g., \cite{E-S} p.~129). 
In the present case, it takes the form 
\begin{eqnarray*}\label{eqn:cusp form dim}
{\dim}(M_l(\rho_A^{inv})) 
& = & 
\frac{d_A^{inv}(l+5)}{12} - \alpha(A)  +  \frac{1}{4} {\rm Re}( \zeta_{8}^{2l} {\tr}(\rho_A^{inv}(S)) ) \\ 
&  & + \frac{2}{3\sqrt{3}}   {\rm Re}( \zeta_{12}^{2l+1} {\tr}(\rho_A^{inv}(ST))).  
\end{eqnarray*}
We shall show that 
\begin{eqnarray*}
{\tr}(\rho_A^{inv}(S))    & = & \zeta_{8}^{-\sigma(A)} \sqrt{|A|}^{-1} \cdot {\GAOA},  \\  
 {\tr}(\rho_A^{inv}(ST)) & = & \zeta_{8}^{-\sigma(A)} \sqrt{|A|}^{-1} \cdot \overline{{\gaoa}}. 
\end{eqnarray*}
We use the natural basis $v_{[x]}$ of $({\C}A)^{{\OA}}$ to compute the traces. 
Then  
\begin{eqnarray*}
\zeta_{8}^{\sigma(A)} \sqrt{|A|} \cdot \rho_A(S)(v_{[x]}) 
& = & \sum_{x'\in{\OA}x} \sum_{y\in A} e(-(x', y))\mathbf{e}_{y} \\ 
& = & \sum_{[y]\in {\OA}\backslash A} \sum_{y'\in{\OA}y} \sum_{x'\in{\OA}x} e(-(x', y'))\mathbf{e}_{y'} \\ 
& = & \sum_{[y]\in {\OA}\backslash A} \sum_{y'\in{\OA}y} \langle [y'], [x] \rangle_A \mathbf{e}_{y'} \\ 
& = & \sum_{[y]\in {\OA}\backslash A} \langle [y], [x] \rangle_A v_{[y]}. 
\end{eqnarray*}
This gives the trace formula for $\rho_A^{inv}(S)$. 
For $ST$ we have 
\begin{equation*}
\rho_A(ST)(\mathbf{e}_{x}) = 
\zeta_{8}^{-\sigma(A)} \sqrt{|A|}^{-1} \cdot \sum_{y\in A} e(q(x))e(-(x, y))\mathbf{e}_{y} 
\end{equation*}
and hence 
\begin{eqnarray*}
\zeta_{8}^{\sigma(A)} \sqrt{|A|} \cdot \rho_A(ST)(v_{[x]}) 
& = & \sum_{[y]\in {\OA}\backslash A} \sum_{y'\in{\OA}y} \sum_{x'\in{\OA}x} e(q(x)) e(-(x', y'))\mathbf{e}_{y'} \\ 
& = & \sum_{[y]\in {\OA}\backslash A} \sum_{y'\in{\OA}y} e(q(x)) \langle [y'], [x] \rangle_A \mathbf{e}_{y'} \\ 
& = & \sum_{[y]\in {\OA}\backslash A} e(q(x)) \langle [y], [x] \rangle_A v_{[y]}. 
\end{eqnarray*} 
This gives the trace formula for $\rho_A^{inv}(ST)$. 
\end{proof}



\begin{thebibliography}{99}

\bibitem{B-E-W}Berndt, B.~C.; Evans, R.~J.; Williams, K.~S. 
\textit{Gauss and Jacobi sums.} 
John Wiley \& Sons, 1998.

\bibitem{Bo}Borcherds, R. 
\textit{Automorphic forms with singularities on Grassmannians.} 
Invent. Math. \textbf{132} (1998), no. 3, 491--562. 

\bibitem{E-S}Eholzer, W.; Skoruppa, N.-P. 
\textit{Modular invariance and uniqueness of conformal characters.} 
Comm. Math. Phys. \textbf{174} (1995), no. 1, 117--136. 

\bibitem{Ge}Gerstein, L.~J.
\textit{Basic quadratic forms.} 
Grad.~Stud.~Math., \textbf{90}. American Mathematical Society,  2008.  

\bibitem{M-S}Morrison, D. R.; Sait\=o, M.-H.
\textit{Cremona transformations and degrees of period maps for K3 surfaces with ordinary double points.} 
Algebraic geometry, Sendai, 1985, 477--513, 
Adv. Stud. Pure Math., \textbf{10}, North-Holland, 1987. 


\bibitem{Wa} Wall, C. T. C.
\textit{Quadratic forms on finite groups, and related topics.} 
Topology \textbf{2} (1963) 281--298. 


\end{thebibliography}
\end{document}